\newcommand{\mred}[1]{\textcolor[rgb]{1.00,0.50,0.00}{#1}}
\setlist[enumerate]{leftmargin=.5in}
\setlist[itemize]{leftmargin=.5in}
\crefname{hypothesis}{Hypothesis}{Hypotheses}
\crefname{fact}{Fact}{Facts}
\title{
Scaling crossover of the generalized Jeffreys-type law\thanks{Submitted to the editors DATE.
\funding{This work was funded by the Peking University Boya postdoctoral fellowship.
}}}
\author{Fugui Ma \thanks{School of Mathematical Sciences, Peking University, Beijing, 100871, China (\email{mafugui@math.pku.edu.cn}).}}
\begin{document}

\maketitle

\begin{abstract}
The generalized Jeffreys-type law is formulated as a multi-term time-fractional Jeffreys-type equation, whose dynamics exhibit rich scaling crossover phenomena entailing different diffusion mechanisms. In this work, we provide a novel physical explanation for the equation from first principles, beginning with a microscopic description based on the continuous-time random walk framework with a generalized waiting time distribution and further deriving the equation from an overdamped Langevin equation subject to a stochastic time-change (subordination). Employing the Laplace transform method, we conduct a rigorous analysis of the equation, establishing its well-posedness and providing a detailed Sobolev regularity analysis. We also develop a novel numerical scheme, termed the CIM-CLG algorithm, which achieves spectral accuracy in both time and space while substantially relaxing the temporal regularity requirements on the solution. The algorithm reduces the computational complexity to $\mathcal{O}(N)$ in time and $\mathcal{O}(M\log ⁡M)$ in space and is fully parallelizable. Detailed implementation guidelines and new technical error estimates are provided. Extensive numerical experiments in 1D and 2D settings validate the efficiency, robustness, and accuracy of the proposed method. By integrating stochastic modeling, mathematical analysis, and numerical computation, this work advances the understanding of the generalized Jeffreys-type law and offers a mathematically rigorous and computationally efficient framework for tackling complex nonlocal problems.
\end{abstract}

\begin{keywords}
Jefferys-type law, CTRW, Langevin equation, regularity, contour integral method, Chebyshev-Legendre Galerkin spectral method
\end{keywords}

\begin{MSCcodes}
60K50, 65M12, 65M70, 65E10, 82C31
\end{MSCcodes}
\section{Introduction}
\subsection{Macro phenomenological Jeffreys-Type law}
Diffusion refers to the net movement of particles or molecules from a region of high concentration to one of lower concentration, driven by a concentration gradient \cite{cussler2009}. This process was first formulated mathematically by Adolf Fick in 1855 \cite{Fick1855b,Fick1855}, and is now known as Fick's law of diffusion. Fick's first law describes steady-state diffusion at the macroscopic level, stating that the diffusion flux $J(t,r)$ (defined as the amount of substance per unit area per unit time, in [$mol/(m^2\cdot s)$]) is proportional to the negative gradient of the concentration $\nabla\varphi(t,r)$, where $\varphi(t,r)$ denotes the concentration (in [$mol/m^3$]). This relationship is expressed as $J(t,r)=-D_1\nabla \varphi(t,r)$, where $D_1$ is the diffusion coefficient (in [$m^2/s$]) \cite{Conlisk2012}. When the concentration profile changes over time, the system is no longer in a steady state, and Fick's second law must be applied. By combining the continuity equation, $\partial_t\varphi(t,r)+\operatorname{div} J(t,r)=0$ (see, e.g., \cite[Eq. (1.3)]{Joseph89}), with Fick's first law, one derives Fick's second law, i.e., $\partial_t\varphi(t,r)-D_1\Delta\varphi(t,r)=0$ \cite{Lee2000}. It seems widely accepted that Fick's law of diffusion was developed by analogy with Fourier's law of heat conduction, which leads to a mathematically identical equation known as the heat equation or the law of thermal conductivity \cite{cussler2009,Garrido2001}, albeit with different physical interpretations of the variables. In practice, however, diffusion processes are often influenced by factors such as temperature, concentration gradients, particle size, and properties of the medium, leading to different diffusion types. Diffusion that conforms to Fick's laws is termed normal diffusion (ND) or Fickian diffusion, while diffusion that deviates from these laws is referred to as anomalous diffusion (AD) or non-Fickian diffusion \cite{Metzler00}.

Fick's law of diffusion is widely employed in transport theory \cite{Lee2000}, yet it does not account for the inertia of the diffusing particles or molecules \cite{Joseph89}. As a result, the classical diffusion equation provides an accurate description only in weakly inhomogeneous media or under slow processes where the relaxation time is longer than the characteristic time scale \cite{Rukolaine2013}. In other scenarios, this model may prove inadequate. This limitation has motivated numerous modifications of Fick's law to better represent mass transfer dynamics (see, e.g., \cite{Alley1979,Brogioli2000,Joseph89}).

The simplest modification of Fick's law that accounts for inertial effects in moving particles is Cattaneo's equation (CE), $\tau\partial_t J(t,r)+J(t,r)=-D_2\nabla \varphi(t,r)$, where $\tau$ denotes the relaxation time \cite{Joseph89}. This constitutive law extends Fick's first law  by introducing a correction term proportional to the time derivative of the diffusion flux, $\partial_t J(t,r)$. The presence of this term reflects that the flux $J(t,r)$ takes a finite relaxation time $\tau$ to become established, rather than responding instantaneously. Combining Fick's law and CE yields a more general constitutive relation
\begin{equation}\label{Law:JE}
\tau\partial_t J(t,r)+J(t,r)
=-\chi\big(1+\tau_2\partial_t\big)\nabla\varphi(t,r),\quad \forall~ t>0,
\end{equation}
where $D_1>0$, $\chi:=(D_1+D_2)/2>0$, and $\tau_2=\tau D_1/(D_1+D_2)$ is an additional relaxation time. The constitutive relation \eqref{Law:JE} is referred to as the Jeffreys-type law \cite{Awad2021,Joseph89,Rukolaine2013}. Notably, as $\tau\rightarrow0$, equation \eqref{Law:JE} reduces to Fick's law, and when $D_1=0$, it simplifies to Cattaneo's equation. Solving \eqref{Law:JE} yields the flux expression  $J(t,r)=-D_1\nabla\varphi(t,r)-\frac{D_2}{\tau}\int_{0}^{t}e^{-(t-s)/\tau}\nabla\varphi(s,r)ds
+e^{-t/\tau}(D_1\nabla\varphi_0+J_0)$
with initial conditions $\varphi_0=\varphi(0,r)$ and $J_0=J(0,r)$ representing the distribution of concentration and flux at $t=0$ \cite{Joseph89}, respectively. Substituting this result into the continuity equation and eliminating $J(t,r)$ leads to the Jeffreys-type equation (JE) \cite{Awad2021}
\begin{equation}\label{eq:JE}
\partial_t\varphi(t,r)+\tau\partial_t^{2}\varphi(t,r)
=\chi\big(1+\tau_2\partial_t\big)\Delta \varphi(t,r), \quad \forall~ t>0,
\end{equation}
where $r\in\mathbb{R}^n$ ($n=1,2,3$). When $\tau=0$, \eqref{eq:JE} reduces to Fick's second law; when $D_1=0$ (implying $\tau_2=0$), it becomes the telegrapher's equation (TE). The TE is a hyperbolic equation that represents the simplest model integrating damped thermal waves alongside parabolic diffusion \cite{Masoliver16,Sandev2025}, and serves as a fundamental link between circuit theory and electromagnetic field theory.

Although the Jeffreys-type law provides an important and intuitive modification to Fick's law, it remains essentially a phenomenological model that lacks rigorous derivation from first principles, and thus suffers from limited physical interpretability. In this work, we conduct a comprehensive study of a class of generalized Jeffreys-type laws, which constitute a well-established extension of both Fourier's law and Fick's law in the context of heat and mass transport. Unlike previous macroscopic derivations of diffusion equations or the JE, our approach rigorously derives the generalised Jeffreys-type law from microscopic particle transport dynamics. This foundation derivation not only provides a clearer physical interpretation but also enables a multi-faceted exploration of the law through perspectives such as stochastic modeling, PDE analysis, and numerical analysis.

\subsection{Micro stochastic modeling: Scaling crossover}
At the microscopic level, the ND corresponds to Brownian motion, characterized by a consistent random-walk picture of diffusing particles \cite{Sokolov2012} and aligning with a Markovian stochastic process \cite{Metzler00}. This indicates that the distribution function remains Gaussian for all times \cite{Alley1979}. Let $X(t)$ be a stochastic process, the mean-squared displacement (MSD) is defined as the ensemble average $\langle X^2(t)\rangle:=\int_{-\infty}^{\infty}r^2p(t,r)dr$ with $p(t,r)$ being the probability density function (PDF) of finding a particle at position $r$ at time $t$. It is observed that Brownian motion exhibits the scaling $\langle X^2(t)\rangle\sim t$ (for further discussion, see \cite{Sokolov2012}). While the AD exhibits an MSD that may follow a scaling relation $\langle X^2(t)\rangle\sim t^{\alpha}$ with $\alpha\neq1$. Based on the scaling exponent $\alpha$, the process is classified as follows: when $0<\alpha<1$, it is termed subdiffusion; when  $1<\alpha<2$, it is referred to as superdiffusion; when $\alpha=2$, it is known as ballistic diffusion \cite{Awad20b,Metzler00}. In recent years, a growing body of research has focused on AD, as reflected in works such as \cite{Bouchaud90,Bressloff2013,Deng20,Klafter15,Metzler00}.

Recent experimental studies have demonstrated that the diffusion of microscopic active particles often exhibits scaling crossover, characterized by a transition between different diffusion regimes over time (e.g., from short-time superdiffusion to long-time normal diffusion). We term this phenomenon crossover diffusion (CD), marked by the scaling exponent $\alpha$. Such behavior has been observed in diverse systems, including: molecular diffusion during polymerization-depolymerization, transitioning from subdiffusion to normal diffusion \cite{Srinivasan2024}; self-propelled colloids in quasi-2D suspensions, showing a rapid crossover from subdiffusion to normal diffusion \cite{Pastore2021}; fermion-fermion scattering under weak interactions, exhibiting dynamical features that crossover from early-time ballistic diffusion to late-time normal diffusion \cite{Lloyd2024}; water molecules in the hydration shell of proteins, undergoing a transition from subdiffusion to normal diffusion \cite{Tan2018}; and swimming bacteria described by L\'{e}vy walks, displaying a crossover from ballistic to normal diffusion \cite{Mukherjee2021}.

In addition to experimental studies, researchers have begun to investigate crossover diffusion (CD) using microscopic stochastic models. Relevant contributions include, but are not limited to, the following: Masoliver \cite{Masoliver16} generalize the TE to model AD, obtaining a space-time fractional telegrapher's equation that captures two distinct dynamic regimes, $\langle X^2(t)\rangle\sim t^{2\alpha}$ as $t\rightarrow0^{+}$ and $\langle X^2(t)\rangle\sim t^{\alpha}$ as $t\rightarrow+\infty$. Specifically, this models leads to a retarded subdiffusion process for $0<\alpha<1/2$ and a transition from superdiffusive to subdiffusive behavior for $1/2<\alpha<1$. E. Awad and R. Metzler \cite{Awad20b} examined generalized Cattaneo-type equations and demonstrated their capacity to transition from superdiffusion to subdiffusion.  Ma et al. \cite{Ma2023a} studied a time-fractional normal-subdiffusion transport equation, illustrating the crossover in MSD from ND to subdiffusion. Notably, E. Awad et al. \cite{Awad2020} extended the JE to derive the time fractional Jeffreys-type equation (TFJE)
\begin{equation}\label{eq:problem01}
\big(1+a{~_{0}^{RL}\mathfrak{D}_{t}^{\alpha}}\big) \partial_tp(t,r)=k~_{0}^{RL}\mathfrak{D}_{t}^{1-\gamma}\big(1+b{~_{0}^{RL}\mathfrak{D}_{t}^{\beta}}\big)\Delta p(t,r),\quad \forall~ t>0,
\end{equation}
where $0<\alpha,\beta,\gamma<1$, $a,b>0$, $k>0$, and the Riemann-Liouville time fractional derivative $_{0}^{RL}\mathfrak{D}_{t}^{s}$ is defined by $_{0}^{RL}\mathfrak{D}_{t}^{s}u(t):=\frac{1}{\Gamma(1-s)}\frac{d}{dt}\int_{0}^{t}(t-\tau)^{-s}u(\tau)d\tau$ with $1>s>0$. In \cite{Awad2021}, the authors established a connection between the TFJE and microscopic stochastic process, showing that the MSD exhibits various anomalous behaviors such as retarded and accelerated subdiffusion, as well as a crossover from superdiffusion to subdiffusion. Subsequently, Bazhlekova \cite{Bazhlekova24} further generalized the model to the multi-term time-fractional Jeffreys equation (MT-TFJE)
\begin{equation}\label{eq:problem02}
\Big(1+a~_{0}^{RL}\mathfrak{D}_{t}^{\alpha}+\sum_{k=1}^{K}a_k~_{0}^{RL}\mathfrak{D}_{t}^{\alpha_k}\Big)
\,\partial_t\,p(t,r)
= ~_{0}^{RL}\mathfrak{D}_{t}^{1-\gamma}\Big(1+b~_{0}^{RL}\mathfrak{D}_{t}^{\beta}
  +\sum_{j=1}^{J}b_j~_{0}^{RL}\mathfrak{D}_{t}^{\beta_j}\Big)\Delta p(t,r),
\end{equation}
for all $t>0$, with the parameter constraints
\begin{equation}\label{eq:parameters}
\left\{\begin{aligned}
&0<\alpha_k<\alpha\leq1,~ 0<\beta_j<\beta\leq1,~ 0<\gamma\leq1,
~ a>0,~ b>0,~ a_k\geq0,~ b_j\geq0,\\
&k=1,2,...,K,~ j=1,2,...,J,~ K,J\in\mathbb{N}_{+}.
\end{aligned}\right.
\end{equation}
In accordance with the nomenclature of Jeffreys-type law, we designate  \eqref{eq:problem02} as the generalized Jeffreys-type law. While this model represents a necessary preliminary step, it currently lacks a detailed physical justification, as well as rigorous mathematical analysis and numerical analysis in a bounded domain.

Admittedly, numerous studies have investigated multi-term time-fractional differential equations (MT-TFDEs), which were developed to improve the modeling accuracy of the single-term counterparts, as evidenced in works such as \cite{Chen15,Jin2015,Luchko2011,Maes23,Sin22}. However, the present study on the generalized Jeffreys-type law \eqref{eq:problem02} diverges from previous research on MT-TFDEs in several key aspects: ($i$) Structural difference: The MT-TFJE exhibits a fundamentally different form from typical MT-TFFDEs. Existing studies on MT-TFDEs typically involve multi-term fractional derivatives on only one side of the equation, lacking the composite operator structure present on the right-hand side of \eqref{eq:problem02}. ($ii$) Generality and reducibility:  By choosing specific parameters, MT-TFJE can be reduced to a wide range of established equations, demonstrating that they constitute a more general framework than other MT-TFDEs. For instance, when $b=b_j=0$, MT-TFJE reduces to MT-TFDE; when $a_k=b_j=0$, $\alpha=\beta=\gamma=1$,  MT-TFJE becomes the classical JE; when $a=a_k=0$, $b=b_j=0$, it reverts to standard sub-diffusion equation; when $a_k=b_j=0$, it corresponds to the time-fractional telegrapher's equation. The versatile structure of MT-TFJE and its ability to capture crossover diffusion, as noted in previous studies, motivate a thorough and detailed investigation of this equation.

\subsection{Problem setup and summary of contributions}
This paper is structured around a threefold analysis of the MT-TFJE, encompassing its stochastic modeling, the establishment of solution theory and regularity properties, and the development of a numerical method with spatio-temporal spectral accuracy. These efforts may advance the understanding of MT-TFJE and provide novel methodological insights to the field.

Firstly, we establish two types of stochastic models at the microscopic level, based on the continuous-time random walk (CTRW) framework and the overdamped Langevin equation, to provide a physical interpretation for the MT-TFJE. In Model $I$, an extended CTRW model is constructed using a generalized waiting time distribution combined with a Gaussian step size distribution. By evaluating the MSD, we analyze the asymptotic behavior of the stochastic process at the microscopic scale and investigate the rich crossover diffusion mechanisms described by the MT-TFJE. Building on the understanding gained from model $I$, we develop Model $II$ by coupling the overdamped Langevin equation with a time-changing process, establishing a connection with the CTRW model and offering an alternative perspective on the microscopic physical interpretation of the MT-TFJE.

Secondly, we generalize the MT-TFJE by incorporating suitable initial and boundary conditions along with a suitably defined source term, which leads to the following well-posed problem:
\begin{equation}\label{eq:problem}
\left\{\begin{aligned}
_{0}^{RL}\mathfrak{I}_{t}^{1-\gamma}&\Big(1+a~^{RL}_{0}\mathfrak{D}_{t}^{\alpha}
+\sum_{k=1}^{K}a_k~_{0}^{RL}\mathfrak{D}_{t}^{\alpha_k}\Big)
   \partial_tp(t,r)\\
&=-\Big(1+b~_{0}^{RL}\mathfrak{D}_{t}^{\beta}+\sum_{j=1}^{J}b_j~_{0}^{RL}\mathfrak{D}_{t}^{\beta_j}\Big)\mathcal{A} p(t,r)+f(t,r),
                                                                        && (t,r)\in(0,T]\times\Omega, \\
p(0,r)&=p_0,~ ~\lim_{t\rightarrow0}\partial_t~p(t,r)=0, && (t,r)\in\{0\}\times\Omega, \\
p(t,r)&=0,                                                              && (t,r)\in(0,T]\times\partial\Omega,
\end{aligned}\right.
\end{equation}
where $\Omega\subset\mathbb{R}^{n}$ ($n=1,2,3$) is a bounded convex polygonal domain with smooth boundary $\partial\Omega$, and $T>0$. The parameters satisfy the constraints given in \eqref{eq:parameters}. Here, $_{0}^{RL}\mathfrak{I}_{t}^{s}$ denotes the Riemann-Liouville fractional integral operator, defined for $0<s<1$ by $_{0}^{RL}\mathfrak{I}_{t}^{s}u(t):=\frac{1}{\Gamma(1-s)}\int_{0}^{t}(t-\tau)^{-s}u(\tau)d\tau$, and  $\mathcal{A}$ is a second-order self-adjoint, closed, positive definite spatial operator. The source term belongs to the space $f\in L^{1}(0,T;L^{1}(\Omega))$.

Next, we analyze the well-posedness and regularity of the solution to Problem \eqref{eq:problem}, establishing a solution theory within a rigorous mathematical framework. Existing studies on MT-TFDEs often rely on the Mittag-Leffler function to examine well-posedness and regularity (see, e.g., \cite{Bazhlekova24,Jin2015,Luchko2011,Maes23,Sin22}). Different from such methods, our work employs the Laplace transform method, which offers the advantage of more directly connecting solution properties with the characteristic functions arising in stochastic modeling. This approach not only bridges stochastic models with analytical techniques but also establishes a clear connection between model construction and mathematical analysis.

Due to the complex form of the equation and the historical memory and non-local property inherent in multi-term time fractional differential operators, obtaining exact solution for MT-TFJE is often challenging. As a result, numerical methods are indispensable for solving such equation. Currently, the primary numerical methods for the temporal semi-discretization of MT-TFDEs fall into two categories: the $L1$-type scheme (see, e.g., \cite{Feng18,Jin2015,Mustapha20}) and convolution quadrature (CQ) methods (see, e.g., \cite{Cuesta06,Deng18,Lubich96}). A major limitation common to both methods is their substantial computational and memory requirements, with arithmetic complexities of $\mathcal{O}(N^2)$ and $\mathcal{O}(N)$, respectively. Even in one-dimensional settings, when combined with spatial semi-discretization involving $M$ degrees of freedom, the total computational cost increases to $\mathcal{O}(N^2 M^2)$, placing severe demands on computational resources. Moreover, these methods generally require a high solution regularity to maintain their theoretical convergence orders.

Finally, to overcome the limitations discussed above, we propose a novel numerical scheme that achieves spectral accuracy in both time and space while significantly relaxing temporal regularity requirements on the solution. Specifically, we extend the contour integral method (CIM) (see, e.g., \cite{Colbrook2022a,Li2023,Li2021,Fernandez2006,Ma2023a,Ma2023b,McLean2010,Talbot1979,Weideman2007}) to construct a semi-discrete scheme in time, which is then coupled with a Chebyshev-Legendre Galerkin (CLG) spatial semi-discrete scheme (see, e.g., \cite{Ma98,Shen94,shen2011,shen1996cient,Wu03,Zhao12})to form a fully discrete scheme, termed  CIM-CLG algorithm. By incorporating fast Fourier transform (FFT) for acceleration, our approach reduces the temporal computational cost to $\mathcal{O}(N)$ with only $\mathcal{O}(1)$ storage overhead, and the spatial cost to $\mathcal{O}(M\log M)$, leading to a substantial improvement in efficiency. Meanwhile, we provide a rigorous error analysis of the proposed scheme and demonstrate its high numerical efficiency, stability and robustness through numerical experiments in both one- and two-dimensional settings. Furthermore, the algorithm achieves spatio-temporal spectral accuracy, supports full ($100\%$) parallelization, and requires low (or even no) temporal regularity requirements. This framework thus provides a computationally efficient and mathematically sound foundation for solving high-dimensional or long-time MT-TFDEs.

\subsection{Outline of paper}
The rest of the paper is organized as follows.
In Section \ref{sec:Modeling}, the CTRW model with a generalized waiting-time distribution  and an overdamped Langevin equation coupled with a time-changing stochastic process are used to derive the MT-TFJE from first principles, thereby providing a physical interpretation. Asymptotic analysis at different time scales is also performed to explore the rich crossover diffusion behaviors described by the model.
In Section \ref{sec:SolutionT}, we analyze the regularity of solution to the MT-TFJE. A priori estimates are established for both smooth and non-smooth initial data, and an improved regularity result is derived under certain conditions.
In Section \ref{sec:CIM-CLG-s}, we develop a novel numerical scheme, termed CIM-CLG, that achieves spatio-temporal spectral accuracy. Rigorous error estimates are provided, a strategy for determining the optimized parameters of a parameterised hyperbolic integral contour is proposed, and implementation details of the scheme are discussed.
In Section \ref{sec:Numerical}, the effectiveness and robustness of the spatio-temporally spectral accuracy CIM-CLG are demonstrated via several numerical examples in one- and two-dimensions, validating the theoretical analysis. Finally, the paper concludes with a summary of findings and suggestions for future work.

\section{Mathematical Modeling: From micro to macro}
\label{sec:Modeling}
In this section, we derive the macroscopic equation \eqref{eq:problem} from microscopic stochastic processes, with a focusing on CTRW framework and overdamped Langevin dynamics. This approach establishes a rigorous connection between macroscopic descriptions and microscopic stochastic dynamics, while also providing a physically meaningful interpretation of the macroscopic model.
\subsection{Model I: Generalized CTRW}
\label{subsec:modelI}
The CTRW model paints a vivid picture for understanding particle diffusion: a particle's journey through space consists of random jumps, each followed by a random waiting time, weaving together a stochastic path full of unexpected pauses and leaps. These microscopic dynamics collectively manifest as rich macroscopic diffusion behaviors, which have been extensively explored in studies such as \cite{Bel2005, Klafter15, Metzler00,Montroll65,Sandev2018, Ma2025}. We now summarize some key results from these literatures.

Let $p(t,r)$ denote the probability density function (PDF) of locating the particle at position $r$ at time $t$. Within the CTRW framework, the PDF $p(t,r)$ admits a compact algebraic representation in the Fourier-Laplace domain, known as the Montroll-Weiss equation \cite{ Metzler00}
\begin{equation}\label{Eq:M-W}
\widehat{\widetilde{p}}_{CTRW}(z,\mathbf{k})
=\frac{1-\widehat{\psi}(z)}{z}\frac{\widetilde{p}_{0}(\mathbf{k})}{1-\widehat{\psi}(z)\widetilde{\lambda}(\mathbf{k})},\quad \forall~ z\in\mathbb{C},~\mathbf{k}\in\mathbb{R}^n,
\end{equation}
where  $\widehat{\psi}(z)$ is the Laplace transform of the waiting time distribution $\psi(t)$, $\widetilde{\lambda}(\mathbf{k})$ is the Fourier transform of the step size distribution $\lambda(x)$, and $\widetilde{p}_{0}(\mathbf{k})$ is the the Fourier transform of the initial distribution. This equation forms the cornerstone of the theory of decoupled CTRW. It has been established that when the CTRW is characterized by a power-law waiting time distribution $\psi(t)\simeq t^{-\alpha-1}$ with $0<\alpha<1$ for $t>0$, and a Gaussian step size distribution with $\widetilde{\lambda}(\mathbf{k})=\exp(-\mathbf{k}^2)\simeq 1-|\mathbf{k}|^2$, $\mathbf{k}\in\mathbb{R}^n$ in Fourier space (corresponding to $\lambda(\mathbf{x})=(4\pi)^{-1/2}\exp(-\mathbf{x}^2/4)$, $\mathbf{x}\in\mathbb{R}^n$), the continuum limit leads to the time-fractional subdiffusion equation. For further details, we refer to \cite{MR4290515,Ma2023a,Sandev2018,Metzler00}.

We propose a generalized stable waiting-time distribution in the Laplace domain, given by
\begin{equation}\label{eq:PDFTime}
\widehat{\psi}(z)=\exp\big(-\eta(z)\big)\simeq 1-\eta(z),\quad \forall~ z\in\mathbb{C},
\end{equation}
where the function $\eta(z)$ is defined as
\begin{equation}\label{eq:denoteeta}
\eta(z):=z^{\gamma}\Big(1+az^{\alpha}+\sum_{k=1}^{K}a_kz^{\alpha_k}\Big)
         \Big(1+bz^{\beta}+\sum_{j=1}^{J}b_jz^{\beta_j}\Big)^{-1},
\end{equation}
with the relevant parameters specified in \eqref{eq:parameters}. The expression in \eqref{eq:PDFTime} corresponds to the characteristic function of the waiting-time distribution \cite[Pages 16 to 18]{Applebaum2009}, which encapsulates complete information about the temporal operators involved. By combining this waiting-time distribution and the Gaussian step size distribution introduced earlier and employing the Montroll-Weiss equation \eqref{Eq:M-W}, we obtain the PDF $p(t,r)$ in the Fourier-Laplace domain
\begin{equation}\label{eq:PCTRW}
\widehat{\widetilde{p}}_{CTRW}(z,\mathbf{k})
=\frac{\eta(z)}{z}\frac{1}{1-(1-\eta(z))(1-\mathbf{k}^2)}
=\frac{1}{z}\frac{\eta(z)\widetilde{p}_{0}(\mathbf{k})}{\eta(z)(\eta(z)^{-1}-1)\mathbf{k}^2+\eta(z)}.
\end{equation}
Neglecting higher-order terms, we arrive at the asymptotic behavior of the CTRW in the Fourier-Laplace domain, namely $\widehat{\widetilde{p}}_{CTRW}(z,\mathbf{k})\simeq\widehat{\widetilde{p}}(z,\mathbf{k})$, which satisfies
\begin{equation}\label{eq:CTRW}
\widehat{\widetilde{p}}(z,\mathbf{k})-z^{-1}\widetilde{p}_{0}(\mathbf{k})
=-\mathbf{k}^{2}\eta(z)^{-1}\widehat{\widetilde{p}}(z,\mathbf{k}).
\end{equation}
Taking the continuum limit and applying the inverse Fourier-Laplace transform back to the time domain, along with the property $\mathfrak{L}_{z\rightarrow t}\{_{0}^{RL}\mathfrak{D}_{t}^{s}\phi(t)\}=z^{s}\widehat{\phi}(z)$ for all $s\in(0,1)$ (see \cite{Deng2019}), we readily derive the macroscopic equation \eqref{eq:problem02}. This result demonstrates that the MT-TFJE \eqref{eq:problem02} has a solid physical foundation when the waiting-time distribution in \eqref{eq:PDFTime} is adopted. Moreover, as noted in the introduction, by appropriately choosing the parameters in \eqref{eq:parameters}, the model can be adapted to describe a wide range of physical models, including the TFJE, classical JE, subdiffusion equation, bifractional diffusion equation, and heat equation. Furthermore, to illustrating the trajectories of particles obeying the waiting-time distribution given by \eqref{eq:PDFTime} and the Gaussian step size distribution, we visualize them based on CTRW, as shown in Fig. \ref{fig:testfig}.

\begin{figure}[htbp]
  \centering
\begin{minipage}[c]{0.32\textwidth}
 \centering
 \centerline{\includegraphics[width=1\textwidth]{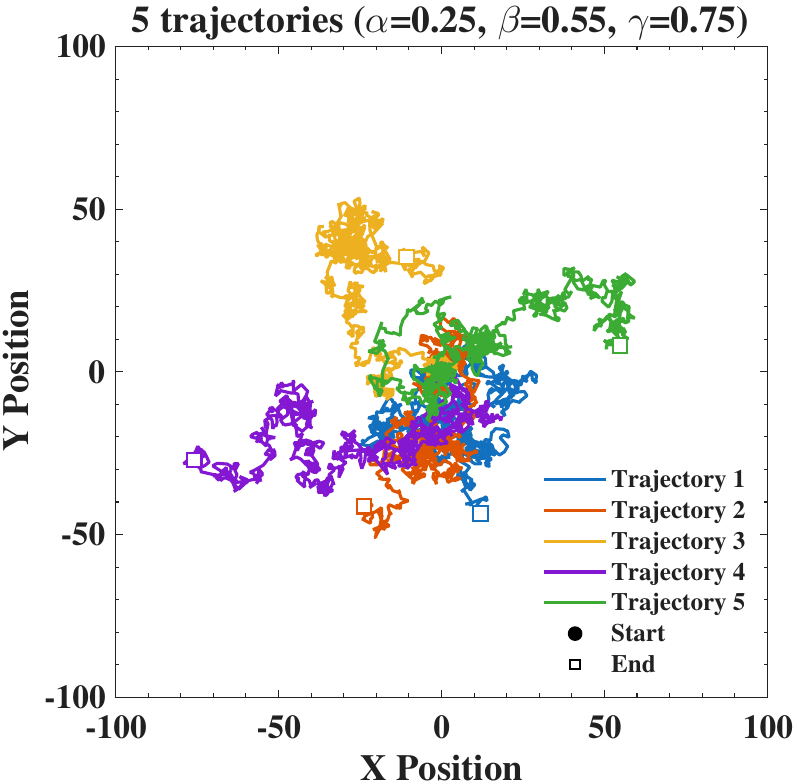}}
\end{minipage}
\begin{minipage}[c]{0.32\textwidth}
 \centering
 \centerline{\includegraphics[width=1\textwidth]{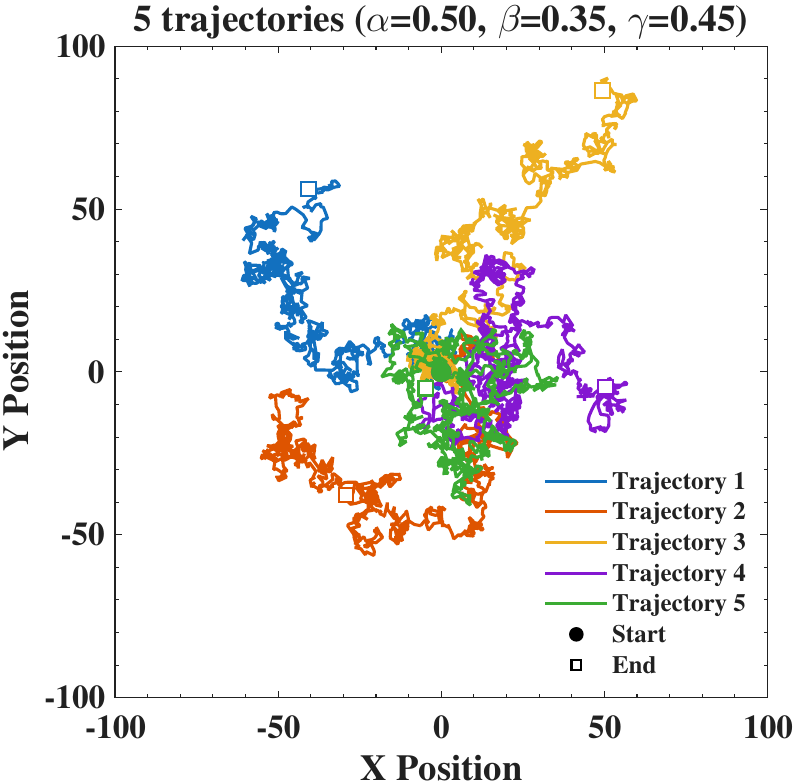}}
\end{minipage}
\begin{minipage}[c]{0.32\textwidth}
 \centering
 \centerline{\includegraphics[width=1\textwidth]{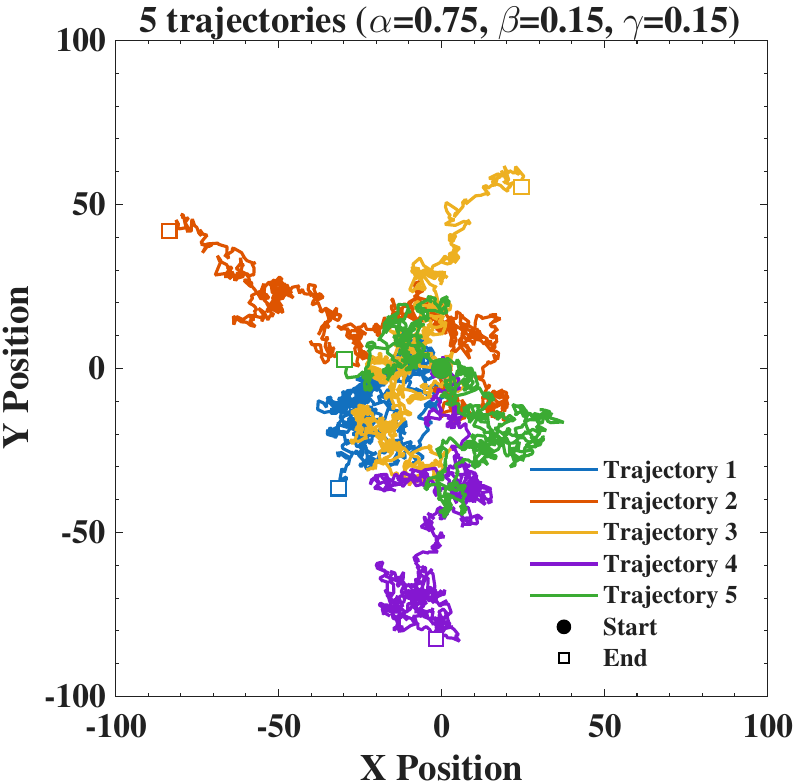}}
\end{minipage}
  \caption{Two-dimensional trajectories illustrating anomalous diffusion within the CTRW framework. The dynamics follow the generalized waiting time distribution in \eqref{eq:PDFTime} and \eqref{eq:denoteeta}, with waiting times distribution numerically generated via Talbot's numerical inverse Laplace transform algorithm \cite{Talbot1979}. Parameters are set to $a=1$, $b=100$, with each trajectory simulated over 1000 steps. The starting point is at the origin.}
  \label{fig:testfig}
\end{figure}

A prerequisite for a physically acceptable CTRW model is that the waiting-time distribution $\psi(t)$ (Eq. \eqref{eq:PDFTime}) is a valid probability density function. The required parameter constraints (Eq. \eqref{eq:parameters}) for this are given in the following lemma.

\begin{proposition}[Parameter Constraints]\label{prop:PDF}
The waiting time distribution $\psi(t)$, defined via its Laplace transform in \eqref{eq:PDFTime}, is a PDF if the parameters in \eqref{eq:parameters} satisfy conditions $\beta\leq\gamma$, $\alpha+\gamma+\sum_{k=1}^{K}\alpha_k\leq1$, and $a_k\cdot b_j\geq0$, for all $k=0,1,...,K$ and $j=0,1,...,J$, where $K,J\in\mathbb{N}_{+}$.
\end{proposition}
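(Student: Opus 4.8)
The plan is to verify that $\psi(t)$, defined through $\widehat{\psi}(z)=\exp(-\eta(z))$, is a genuine probability density function by checking the two defining properties: nonnegativity ($\psi(t)\geq 0$ for $t>0$) and normalization ($\int_0^\infty \psi(t)\,dt = 1$). The normalization is immediate: since $\widehat{\psi}(0^+) = \exp(-\eta(0^+))$ and $\eta(z)\sim z^\gamma \to 0$ as $z\to 0^+$ (because $\gamma>0$ and the two polynomial-type factors in \eqref{eq:denoteeta} tend to $1$), we get $\widehat{\psi}(0^+)=1$, which is exactly $\int_0^\infty\psi(t)\,dt=1$. So the entire content of the proposition is the nonnegativity of $\psi(t)$.

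For nonnegativity, the natural tool is Bernstein's theorem: $\psi(t)\geq 0$ for all $t>0$ if $\widehat{\psi}(z)=e^{-\eta(z)}$ is completely monotone on $(0,\infty)$. A standard sufficient condition is that $\eta(z)$ be a \emph{Bernstein function} — i.e., $\eta\geq 0$ on $(0,\infty)$ with $\eta'$ completely monotone — because the composition $e^{-\eta}$ of the completely monotone function $e^{-x}$ with the Bernstein function $\eta$ is again completely monotone. So the first reduction is: it suffices to show $\eta(z)$ given by \eqref{eq:denoteeta} is a Bernstein function under the stated constraints $\beta\leq\gamma$, $\alpha+\gamma+\sum_{k=1}^K\alpha_k\leq 1$, and the sign condition on the $a_k,b_j$. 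I would first record the toolbox of stability properties of the class of Bernstein functions (and the related class of complete Bernstein functions $\mathcal{CBF}$): products and sums of Bernstein functions need not stay Bernstein, but within the subclass $\mathcal{CBF}$ one has that $z\mapsto z^s\in\mathcal{CBF}$ for $s\in[0,1]$, that $\mathcal{CBF}$ is closed under addition and under composition with itself, that $f\in\mathcal{CBF}\setminus\{0\}$ implies $z/f(z)\in\mathcal{CBF}$ and $1/f\in$ completely monotone, and that if $f,g\in\mathcal{CBF}$ then $f\cdot g\in\mathcal{CBF}$ is generally false but $fg/(f+g)$-type combinations behave well. The key structural observation is to rewrite
\begin{equation*}
\eta(z)=\frac{z^{\gamma}\big(1+az^{\alpha}+\sum_k a_k z^{\alpha_k}\big)}{1+bz^{\beta}+\sum_j b_j z^{\beta_j}}
= \frac{P(z)}{Q(z)},
\end{equation*}
and to split off the factor $z^{\gamma-\beta}$ so that $\eta(z)=z^{\gamma-\beta}\cdot\dfrac{z^{\beta}P(z)/z^{\gamma}}{Q(z)}\cdot(\text{something})$; the condition $\beta\leq\gamma$ makes $z^{\gamma-\beta}\in\mathcal{CBF}$, and I then want the remaining factor $z^{\beta}\big(1+az^\alpha+\cdots\big)/\big(1+bz^\beta+\cdots\big)$ to be a complete Bernstein function as well, using that for $g\in\mathcal{CBF}$ the map $z/(1+ \text{(positive }\mathcal{CBF}\text{ part)})$ stays in $\mathcal{CBF}$ and that $\mathcal{CBF}$ is closed under multiplication by such ratios. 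The exponent budget $\alpha+\gamma+\sum_k\alpha_k\leq 1$ is precisely what keeps all the powers appearing (after clearing the denominator and composing) inside the admissible range $[0,1]$ so that each elementary piece $z^{s}$ is itself in $\mathcal{CBF}$; and $a_k b_j\geq 0$ (all same sign, here all $\geq 0$) guarantees the numerator and denominator factors are themselves Bernstein/complete-Bernstein rather than sign-indefinite. Assembling these closure properties in the right order then yields $\eta\in\mathcal{CBF}\subset$ Bernstein functions, hence $e^{-\eta}$ completely monotone, hence $\psi\geq 0$.

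The main obstacle I anticipate is the algebra of the quotient $P(z)/Q(z)$: sums of monomials $1+bz^\beta+\sum_j b_j z^{\beta_j}$ are complete Bernstein functions (each $z^{\beta_j}\in\mathcal{CBF}$, $\mathcal{CBF}$ closed under addition, and adding the constant $1$ is harmless), but dividing a $\mathcal{CBF}$ numerator by a $\mathcal{CBF}$ denominator does \emph{not} in general produce a $\mathcal{CBF}$ function — one needs the ratio to satisfy $g(z)/g(1)\cdot$(Stieltjes-type) structure, equivalently $z/\eta(z)$ should be a Stieltjes function. So the delicate point is to show $z/\eta(z) = z^{1-\gamma}Q(z)/P(z)$ is a Stieltjes function (completely monotone with a special representation), for which I would use the characterization that $h\in\mathcal{S}$ (Stieltjes class) iff $h(z)=c/z + c' + \int_0^\infty (s+z)^{-1}d\mu(s)$, together with the fact that $Q/P$ with both $P,Q$ of the displayed monomial form lies in $\mathcal{S}$ exactly when the total degree condition $\deg$-budget holds and the coefficients have the stated signs — this is where $\alpha+\gamma+\sum_k\alpha_k\le1$ and $a_kb_j\ge0$ earn their keep. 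If a fully self-contained argument for the Stieltjes property of $Q/P$ is too long, the fallback is to cite the relevant closure results from the Bernstein-functions literature (Schilling–Song–Vondra\v{c}ek) and verify the hypotheses termwise. I would close by invoking Bernstein's theorem to pass from complete monotonicity of $\widehat\psi$ back to $\psi\ge 0$, and combine with the normalization computed above to conclude that $\psi$ is a PDF.
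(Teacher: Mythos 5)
Your overall skeleton coincides with the paper's: normalization is disposed of by evaluating $\widehat{\psi}$ at $z=0$, and nonnegativity is reduced, via Bernstein's theorem, to complete monotonicity of $e^{-\eta}$, which in turn is reduced to showing that $\eta$ is a (complete) Bernstein function. That reduction is correct and is exactly Remark~\ref{Rmk:FourF}~($iv$) in the paper. The normalization step is also fine.

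The genuine gap is the step you yourself flag as delicate: actually proving $\eta\in\mathcal{CBF}$. Your proposed decomposition $\eta(z)=z^{\gamma-\beta}\cdot\big(z^{\beta}P(z)/Q(z)\big)$ requires the product of two complete Bernstein functions to be a complete Bernstein function, which is false in general (you note this), so that route does not close. Your fallback --- showing $z/\eta(z)$ is a Stieltjes function --- is just the definition of $\eta\in\mathcal{CBF}$ restated, and the ``fact'' you invoke, that a ratio $Q/P$ of sums of fractional monomials is Stieltjes exactly when a degree budget and sign condition hold, is precisely the nontrivial content of the proposition; it is neither standard in that generality nor proved in your sketch. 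The paper closes this gap differently: it splits into cases according to $a_k\cdot b_j\ge0$, handles the degenerate two-term case by citing \cite[Prop.~1]{Awad2021}, and for the multi-term case invokes \cite[Thm.~1]{Bazhlekova24} to assert that $1+a\xi^{\alpha}+\sum_k a_k\xi^{\alpha_k}\in[\mathcal{CBF}]^{\alpha}$ and $\xi^{\gamma}\big(1+b\xi^{\beta}+\sum_j b_j\xi^{\beta_j}\big)^{-1}\in[\mathcal{CBF}]^{\gamma}$, i.e.\ each factor is a fractional power of a complete Bernstein function; the product rule $[\hat u]^{s}[\hat v]^{t}\in\mathcal{CBF}$ for $s+t\le1$ (Remark~\ref{Rmk:FourF}~($iv$)) then yields $\eta\in\mathcal{CBF}$, and this is exactly where the hypothesis $\alpha+\gamma+\sum_k\alpha_k\le1$ is consumed. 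If you want to complete your argument without those two citations, you would need to prove the $[\mathcal{CBF}]^{s}$ membership of each factor from scratch, which is a substantive piece of work, not a routine verification.
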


\begin{proof}
The proof of this proposition proceeds in two parts.
First, we show that the PDF $\psi(t)$, defined in the Laplace domain by \eqref{eq:PDFTime}, is normalized. This follows directly from the identity $\int_{0}^{\infty}\psi(t)\mathrm{d}t=\int_{0}^{\infty}e^{-zt}\psi(t)\mathrm{d}t\big|_{z=0}=\widehat{\psi}(z)\big|_{z=0}=1$, which holds for all parameter values given in \eqref{eq:parameters}.

Second, we demonstrate that $\psi(t)$ is non-negative under the stated parameter constraints.

A standard method to establish non-negativity of an integrable function $\varphi(t)$ is to show that $\hat{\varphi}(\xi)$ is completely monotone (c.m.) for $\xi=\Re(z)>0$ (with $\Re(z)$ denotes the real part of $z\in\mathbb{C}$) \cite[pp. 439-442]{Feller71}. Here, $\hat{\varphi}(\cdot)$ represents the Laplac transform $\hat{\varphi}(z)=\int_{0}^{\infty}e^{-z t}u(t)dt$ of $\varphi(t)$. Direct verification in this case is often challenging. Instead, we adopt an indirect approach based on properties of completely monotone functions (CMF), Stieltjes functions (SF), Bernstein functions (BF), and complete Bernstein functions (CBF), as introduced in \cite{Awad2020,Awad2021,Bazhlekova24,Sandev2018} (see Def. \ref{def:definitions} for definitions). Using this approach, we prove that $\widehat{\psi}(z)\in\mathcal{CMF}$ for $\xi=\Re(z)>0$ under the given parameter constraints. To this end, by Remark \ref{Rmk:FourF} ($iv$), it suffices to show that $\eta(\xi)\in\mathcal{CBF}$.

For $k=0,1,...,K$ and $j=0,1,...,J$, from \eqref{eq:parameters}, the condition $a_k\cdot b_j\geq0$ encompasses following cases:
($i$) $a_k=0$ and $b_j=0$;
($ii$) $a_k\geq0$ and $b_j=0$, or $a_k=0$ and $b_j\geq0$;
($iii$) $a_k>0$ and $b_j>0$.
In case ($i$), the model ML-TFJL simplifies to the JE with constraints $\alpha+\gamma-\beta<1$ and $\beta\leq\gamma$. The conclusion follows directly from \cite[Prop. 1]{Awad2021}. For case ($ii$), consider  $a_k\geq0$ and $b_j=0$. Then $\eta(z):=z^{\gamma}\big(1+az^{\alpha}+\sum_{k=1}^{K}a_kz^{\alpha_k}\big)\big(1+bz^{\beta}\big)^{-1}$.
Define $[\mathcal{CBF}]^{s}:=\{\hat{\varphi}(\xi)^{s}:\hat{\varphi}(\xi)\in\mathcal{CBF}, s>0\}$, where $\mathcal{CBF}$ is defined in Remark \ref{Rmk:FourF}. Under the constraint $0<\alpha+\gamma+\sum_{k=1}^{K}\alpha_k\leq1$, we have $0<\alpha+\gamma-\beta<1$ and $\beta\leq\gamma$. From \cite[Thm. 1]{Bazhlekova24}, it follows that
$1+a\xi^{\alpha}+\sum_{k=1}^{K}a_k\xi^{\alpha_k}\in [\mathcal{CBF}]^{\alpha}$,   $\xi^{\gamma}\big(1+b\xi^{\beta}+\sum_{j=1}^{J}b_j\xi^{\beta_j}\big)^{-1}\in[\mathcal{CBF}]^{\gamma}$, and $\alpha+\gamma<1$. Therefore, by Remark \ref{Rmk:FourF} ($iv$), $\eta(\xi)\in\mathcal{CBF}$. The remaining cases follow similarly, which completes the proof.
\end{proof}

The result presented in Prop. \ref{prop:PDF} is fairly general and encompasses several notable cases. For instance, when $a_k=0$, $b_j=0$, and $\alpha=\beta=\gamma=1$, it follows from \cite[Prop. 1]{Awad2021} that for $b/a>1$, $\eta(\xi)\in\mathcal{CBF}$, which guarantees that $\phi(t)$ is a valid PDF. Further discussion of such special cases can be found in \cite[Sec. 4]{Bazhlekova24}. In what follows, we restrict attention to parameter sets satisfying the conditions of Prop. \ref{prop:PDF}.

\subsubsection{The MSD}\label{subsec:MSD}
Let $X(t)$ be the stochastic process described by the generalized CTRW model in Sec. \ref{subsec:modelI}. A standard method for characterizing particle spreading in diffusion processes is to analyze moments of the propagator, especially the MSD \cite{LeVot2017}. Below, we investigate the MSD of $X(t)$ and analyze its asymptotic behavior in both the long- and short-time limits.

First, we recall a useful identity for the moments of stable distributions,
$\langle X^n(t)\rangle
=\mathfrak{L}_{z\rightarrow t}^{-1}\big\{(-i\partial_\mathbf{k})^{n}
 \widehat{\widetilde{p}}(z,\mathbf{k})\big\}|_{\mathbf{k}=0}$, as given in \cite[p.16]{Applebaum2009} or \cite[Sec. 3.3]{Klafter15}. Then, from \eqref{eq:PCTRW}, it follows directly that $\langle X^0(t)\rangle=\mathfrak{L}_{z\rightarrow t}^{-1}\{1/z\}=1$, confirming that the PDF $p(t,r)$ defined in \eqref{eq:PCTRW} in Fourier-Laplace space is properly normalized.

Next, we analyze the MSD of $X(t)$. Following a similar procedure, we obtain
$\langle X^2(t)\rangle=\mathfrak{L}_{z\rightarrow t}^{-1}\{\frac{1}{z}\frac{1-\eta(z)}{\eta(z)}\}$.
Let $\Theta(z):=\eta(z)/(1-\eta(z))$. Observing that $\Theta(z)=\sum_{n=1}^{\infty}\eta(z)^{n}$ (see \cite[Sec.5.1]{Klafter15}) and neglecting the higher-order terms, we approximate $\langle X^2(t)\rangle\simeq\mathfrak{L}_{z\rightarrow t}^{-1}\{1/(z\eta(z))\}$. Using the definition of $\eta(z)$ in \eqref{eq:denoteeta}, and taking the continuum limit, we derive
\begin{equation}\label{MMsD}
\frac{1}{z\eta(z)}=\frac{1+bz^{\beta}+\sum_{j=1}^{J}b_jz^{\beta_j}}
                   {z^{\gamma+1}(1+az^{\alpha}+\sum_{k=1}^{K}a_kz^{\alpha_k})}\simeq
\left\{
\begin{aligned}
&\frac{b}{a}z^{\beta-\alpha-\gamma-1},  && z\rightarrow\infty, \\
&z^{-\gamma-1},                         && z\rightarrow0.
\end{aligned}\right.
\end{equation}
Applying the inverse Laplace transform to \eqref{MMsD} and using Tauberian's theorem (see \cite[Thm.1.5.7]{Applebaum2009}), we obtain the asymptotic behavior of the MSD
\begin{equation}\label{asp:MSD}
\langle X^2(t)\rangle\simeq\mathfrak{L}_{z\rightarrow t}^{-1}\big\{1/\big(z\eta(z)\big)\big\}\sim
\left\{
\begin{aligned}
&\frac{b}{a}\cdot\frac{t^{\alpha+\gamma-\beta}}{\Gamma(1+\alpha+\gamma-\beta)},
                                         &&t\rightarrow0^{+}, \\
&\frac{t^{\gamma}}{\Gamma(1+\gamma)},    &&t\rightarrow+\infty.
\end{aligned}\right.
\end{equation}

Based on the preceding analysis, a systematic scaling analysis can be straightforwardly performed. The resulting asymptotic regimes are directly obtained from equation \eqref{asp:MSD}. In particular, when $b/a\geq1$, the MSD expression in \eqref{asp:MSD} encompasses multiple distinct asymptotic scaling laws, highlighting a diverse range of dynamical and scaling crossovers:
\begin{itemize}
    \item When $\alpha=\beta=\gamma=1$,  the model recovers the classical JE, and the MSD satisfies $\langle X^2(t)\rangle\sim t$ for both $t\rightarrow0^{+}$ and $t\rightarrow\infty$. If $\alpha=\beta=\gamma\neq1$, subdiffusion may occur. For instance, when $\alpha=\beta=\gamma=1/2$, the MSD follows $\langle X^2(t)\rangle\sim t^{1/2}$, signifying a subdiffusion process.
    \item When $\alpha<\beta$, the system corresponds to the TFJE, indicating an accelerating subdiffusion process. If $\gamma=1$, it signifies a crossover from subdiffusion to normal diffusion.
    \item When $\alpha>\beta$, the process shows retarded subdiffusion if $\alpha+\gamma-\beta<1$, and a transition from superdiffusion to subdiffusion if $\alpha+\gamma-\beta>1$. if $\alpha>\beta$ and $\gamma=1$, the behavior describes a crossover from superdiffusion to normal diffusion.
\end{itemize}
The asymptotic behaviors in MSD reported here align with those in \cite{Awad2021, Bazhlekova24}. However, our work is distinguished by a simpler derivation that does not heavily rely on the properties of Mittag-Leffler functions.

\subsection{Model II: Overdampted Langevin equation with time-changing}
To capture the intricate dynamics of MT-TFJE given in \eqref{eq:problem02}, we propose an alternative model via a subordinated coupled, overdamped Langevin system \cite{Fogedby1994}
\begin{equation}\label{eqs:Langevin}
\left\{
\begin{aligned}
&x(t)=y(S(t)),\\
&\mathrm{d}y(\overline{\tau})=\sqrt{2D}\mathrm{d}W(\overline{\tau}).
\end{aligned}\right.
\end{equation}
Here, $S(t)$ denotes the inverse stable subordinator, defined as $S(t)=\inf\{\bar{\tau}\geq0:T(\bar{\tau})>t\}$ (see \cite{Applebaum2009,Ma2023a} and references therein). The process $T(\bar{\tau})$ is an nondecreasing stochastic process that serves as a random clock, capturing the distribution of waiting times. Its Laplace exponent is given by $\mathbb{E}[e^{-zT(\bar{\tau})}]=e^{-\bar{\tau}\eta(z)}$, where $\bar{\tau}$ represents operational time. The constant $D>0$ is the diffusion coefficient, and $W(\bar{\tau})$ is an independent Gaussian white noise characterized by the moments $\langle W(s)\rangle=0$ and $\langle W(s_1)W(s_2)\rangle=\delta(s_1-s_2)$.

Let $g(t,r)$ denote the PDF of the stochastic process $y(\overline{\tau})$. Then $g$ satisfies the standard Fokker-Planck equation \cite{Fogedby1994}
\begin{equation}
\partial_{\bar{\tau}}g(\bar{\tau},r)
=D\partial_{rr}g(\bar{\tau},r), \quad \text{\rm with}\quad g(0,r)=p_0(r).
\end{equation}
Applying the Laplace transform with respect to $\overline{\tau}$ yields $s\widehat{g}(s,r)-g(0,r)=D\partial_{rr}\widehat{g}(s,r)$, or equivalently, $\widehat{g}(s,r)=(s-D\partial_{rr})^{-1}g(0,r)$. Now, let $h(t,\overline{\tau})$ be the PDF of $S(\tau)$. Its Laplace transform (with respect to $t$)  is given by $\widehat{h}(t,\overline{\tau})=z^{-1}\eta(z)e^{-\overline{\tau}\eta(z)}$ (see, e.g., \cite{Fogedby1994}). Using the the subordination identity $p(t,r)=\int_{0}^{\infty}h(t,\bar{\tau})g(\bar{\tau},r)\mathrm{d}\bar{\tau}$ (see, e.g., \cite{Meerschaert13}), and taking the Laplace transform with respected to $t$, we obtain $\widehat{p}(z,r)=z^{-1}\eta(z)\widehat{g}(\eta(z),r)$, Substituting the expression for $\widehat{g}$ and rearranging yields
\begin{equation}
\eta(z)\widehat{p}(z,r)-z^{-1}\eta(z)p_0(r)=D\partial_{rr}\widehat{p}(z,r).
\end{equation}
This expression reduces to the Laplace-domain equivalent of \eqref{eq:CTRW} when $D=1$, consistent with the result obtained from the CTRW model.

Models I and II are constructed to provide a consistent microscopic foundation for the macroscopic MT-TFJE presented in \eqref{eq:problem02}. By carefully reconciling micro- and macro-scale dynamics, we posit that these models form a versatile and robust framework. This framework not only deepens our theoretical understanding but also paves the way for generalizing and applying such dynamics across various domains in engineering and the physical sciences.

\section{Solution theory}
\label{sec:SolutionT}
Having established the microscopic foundations, we now conduct a rigorous mathematical analysis of the macroscopic MT-TFJE. This analysis, focusing on solution regularity and well-posedness, will provide a rigorous foundation for the error estimates of the numerical schemes to follow.
\subsection{Additional notations and function spaces}
\label{sec2}
For clarity of presentation and without loss of generality, we adopt throughout this paper standard notations and results for Lebesgue spaces $L^{q}(\Omega)$ and Sobolev spaces $W^{m,q}(\Omega)$, where $1\leq q\leq \infty$ and $m\in\mathbb{N}$. We define the spatial operator $\mathcal{A}:=-\Delta: H^{2}(\Omega)\cap H_{0}^{1}(\Omega)\rightarrow L^{2}(\Omega)$ as the negative Laplacian with homogeneous Dirichlet boundary conditions. Let $(\lambda_{j},\varphi_{j})_{j\in\mathbb{N}}$ denote its eigenvalues, arranged in non-decreasing order, and the corresponding eigenfunctions normalized in $L^{2}(\Omega)$. The fractional Sobolev space $\dot{H}^{q}(\Omega)$, for $q\geq-1$, is defined as $\dot{H}^{q}(\Omega):=\{v\in L^{2}(\Omega): \sum_{j=1}^{\infty}\lambda^{q}_{j}(v,\varphi_{j})^{2}<\infty\}$, equipped with the inner product $(u,v)_{\dot{H}^{q}(\Omega)}:=\sum_{j=1}^{\infty}\lambda^{q}_{j}(u,\varphi_{j})(v,\varphi_{j})$ and norm $\|v\|_{\dot{H}^{q}(\Omega)}:=(v,v)_{\dot{H}^{q}(\Omega)}^{1/2}$, for all $u$, $v\in\mathrm{span}\{\varphi_{j}\}$ (see, e.g., \cite{Adams2003}). One may verify that the spaces $\dot{H}^{q}$ form a Hilbert interpolation spaces. In particular, $\dot{H}^{0}(\Omega)=L^{2}(\Omega)$, $\dot{H}^{1}(\Omega)=H^{1}_{0}(\Omega)=\{v\in H^{1}: v|_{\partial\Omega}=0\}$, a subspace of $H^{1}(\Omega)$ where functions vanish on $\partial\Omega$, and $\dot{H}^{2}(\Omega)=H^{2}(\Omega)\cap H^{1}_{0}(\Omega)$.

In addition, for $\theta\in(\pi/2,\pi)$, we define the sectors
\begin{itemize}
  \item $\Sigma_{\theta}:=\{z\in\mathbb{C}\setminus\{0\}, |\arg(z)|<\theta\}$,
  \item $\Sigma_{\theta,\delta}:=\{z\in\mathbb{C}: |z|\geq\delta, \delta>0, |\arg(z)|< \theta\}$,
\end{itemize}
and the contour
\begin{itemize}
  \item $
        \Gamma_{\theta,\delta}
             :=\big\{z\in\mathbb{C}:|z|=\delta,|\arg(z)|\leq \theta\big\}\cup\big\{z\in\mathbb{C}:z=\bar{r}e^{\pm i\theta},0<\delta\leq \bar{r}<\infty\big\},
        $
\end{itemize}
oriented with an increasing imaginary part, where $i^2=-1$.

Based on the above preparations, we now state a useful lemma that will be frequently applied in the construction of the solution theory.

\begin{lemma}[Resolvent estimate \cite{Arendt2011}] \label{Lem:Resolvent}
Let $\mathcal{A}$ be the negative Laplacian with a homogeneous Dirichlet boundary conditions. For every $\vartheta\in\big(0,\pi\big)$, there exists a constant $C>0$ such that the resolvent operator $(zI+\mathcal{A})^{-1}: L^{2}(\Omega)\rightarrow L^{2}(\Omega)$ satisfies
\begin{equation}\label{eq:resolvent}
\big\|(zI+\mathcal{A})^{-1}\big\|_{L_2(\Omega)\rightarrow L_2(\Omega)}
\leq C(1+|z|)^{-1}, \quad \forall~ z\in\Sigma_{\vartheta}.
\end{equation}
\end{lemma}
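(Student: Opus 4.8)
The plan is to reduce the claim to the standard resolvent estimate for an elliptic operator with a sector of analyticity of half-angle $\pi/2$, and then to upgrade the angle to an arbitrary $\vartheta \in (0,\pi)$ by a scaling/geometry argument on the sector $\Sigma_\vartheta$. First I would fix the spectral decomposition: since $\mathcal{A} = -\Delta$ with homogeneous Dirichlet data is self-adjoint and positive definite, it admits the eigenpairs $(\lambda_j,\varphi_j)_{j\in\mathbb{N}}$ with $0 < \lambda_1 \le \lambda_2 \le \cdots \to \infty$, and $\{\varphi_j\}$ is an orthonormal basis of $L^2(\Omega)$. For $z \notin (-\infty,0]$ the operator $zI+\mathcal{A}$ is invertible, and for $v = \sum_j (v,\varphi_j)\varphi_j$ one has the diagonal representation $(zI+\mathcal{A})^{-1} v = \sum_j (z+\lambda_j)^{-1} (v,\varphi_j)\varphi_j$. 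By Parseval,
\begin{equation*}
\big\|(zI+\mathcal{A})^{-1}\big\|_{L^2(\Omega)\to L^2(\Omega)} = \sup_{j\in\mathbb{N}} \frac{1}{|z+\lambda_j|}.
\end{equation*}
So the whole estimate boils down to the elementary bound $|z+\lambda| \ge c\,(1+|z|)$ for all $\lambda \ge \lambda_1 > 0$ and all $z \in \Sigma_\vartheta$, with $c = c(\vartheta,\lambda_1) > 0$.

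Next I would establish that geometric lower bound. Fix $\vartheta \in (0,\pi)$. The key observation is that for $z \in \Sigma_\vartheta$ and $\lambda > 0$, the distance from $-\lambda$ (which lies on the negative real axis) to the closed sector $\overline{\Sigma_\vartheta}$ is comparable to $|z| + \lambda$. More precisely: if $|\arg z| \le \pi/2$ then $\Re z \ge 0$, so $|z+\lambda|^2 = (\Re z + \lambda)^2 + (\Im z)^2 \ge (\Im z)^2 + \lambda^2$ and also $\ge |z|^2$ (since $\Re z \ge 0$ gives $2\lambda\Re z \ge 0$), hence $|z+\lambda| \ge \tfrac{1}{\sqrt 2}(|z|+\lambda) \ge \tfrac{1}{\sqrt 2}(|z| + \lambda_1)$. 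If instead $\pi/2 < |\arg z| < \vartheta$, write $z = |z| e^{i\psi}$ with $|\psi| \le \vartheta < \pi$; then $|z+\lambda|^2 = |z|^2 + \lambda^2 + 2\lambda|z|\cos\psi \ge |z|^2 + \lambda^2 - 2\lambda|z||\cos\vartheta| = (|z|-\lambda)^2 + 2\lambda|z|(1 - |\cos\vartheta|)$, and since $1 - |\cos\vartheta| > 0$ this is bounded below by $c_\vartheta (|z|^2 + \lambda^2)$ for a suitable $c_\vartheta \in (0,1]$ (the worst case $|z| = \lambda$ still gives $2\lambda^2(1-|\cos\vartheta|)$). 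In either case $|z+\lambda| \ge c_\vartheta' (|z| + \lambda) \ge c_\vartheta'(|z|+\lambda_1) \ge c_\vartheta' \min(1,\lambda_1)(1+|z|)$. Taking the infimum of the reciprocal over $j$ (equivalently $\lambda = \lambda_j \ge \lambda_1$) and setting $C := 1/(c_\vartheta' \min(1,\lambda_1))$ yields \eqref{eq:resolvent}. Note the constant depends on $\vartheta$ (blowing up as $\vartheta \to \pi$, through $1-|\cos\vartheta| \to 0$) and on $\lambda_1$, which is exactly the dependence allowed by the statement.

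The only mild subtlety — and the part I would be most careful about — is the transition region near $|\arg z| = \pi/2$ and the degeneration as $\vartheta \uparrow \pi$: one must not claim a constant uniform in $\vartheta$, and one must handle both $|z| \gg \lambda$ and $|z| \ll \lambda$ together, which is why phrasing the bound as $|z+\lambda| \gtrsim |z| + \lambda$ (rather than $\gtrsim |z|$ alone) is the right normalization. Everything else is routine: the estimate $(1+|z|) \le (1+\lambda_1^{-1})(|z|+\lambda_1)$ absorbs the shift by $\lambda_1$, and the self-adjointness of $\mathcal{A}$ makes the operator-norm computation exact via Parseval rather than merely an inequality. (For the general abstract statement one could instead cite the standard theory of sectorial operators, e.g.\ \cite{Arendt2011}, but the eigenfunction expansion argument above is self-contained and gives the sharp form of the constant's dependence.)
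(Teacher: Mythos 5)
Your argument is correct. Note, however, that the paper does not prove this lemma at all: it is stated with a citation to the abstract theory in \cite{Arendt2011}, where the resolvent bound on a sector $\Sigma_\vartheta$, $\vartheta<\pi$, is part of the general machinery of sectorial operators and analytic semigroups. Your route is genuinely different and more elementary: you exploit the fact that the Dirichlet Laplacian on a bounded domain is self-adjoint and positive definite with compact resolvent, diagonalize, and reduce everything via Parseval to the scalar inequality $|z+\lambda|\ge c_\vartheta'(|z|+\lambda)$ for $\lambda\ge\lambda_1>0$ and $z\in\Sigma_\vartheta$. The case split at $|\arg z|=\pi/2$ is handled correctly (for $\Re z\ge 0$ one gets $|z+\lambda|^2\ge|z|^2+\lambda^2$ directly; for $\pi/2<|\arg z|<\vartheta$ the bound $|z|^2+\lambda^2-2\lambda|z||\cos\vartheta|\ge(1-|\cos\vartheta|)(|z|^2+\lambda^2)$ follows from $2\lambda|z|\le|z|^2+\lambda^2$), and the final absorption $|z|+\lambda_1\ge\min(1,\lambda_1)(1+|z|)$ is right. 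What the citation buys the paper is generality (the estimate holds for any sectorial operator, not just normal ones, and the same reference covers vector-valued Laplace transform theory used elsewhere); what your argument buys is self-containedness, an exact operator-norm identity rather than an inequality, and explicit constants whose blow-up as $\vartheta\uparrow\pi$ and dependence on $\lambda_1$ you correctly identify. Since the paper only ever applies the lemma to the self-adjoint operator $\mathcal{A}=-\Delta$ (and to $(\eta(z)I+\mathcal{A})^{-1}$ with $\eta(z)\in\Sigma_\theta$), your proof fully covers the use made of the result.
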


Throughout this work, the symbols $c$ and $C$ denote generic positive constants that may depend on model parameters but are independent of the functions under consideration. Their values may vary from line to line.

\subsection{Sobolev regularity of the solution}
This section is devoted to establishing the well-posedness and Sobolev regularity of the solution to Problem~\eqref{eq:problem}.

We begin by applying the Laplace transform to both sides of \eqref{eq:problem} for $z\in\Sigma_{\theta}$, where $\theta\in(\pi/2,\pi)$, which yields
\begin{displaymath}\label{eq:Lapace-problem}
z^{\gamma-1}\Big(1+az^{\alpha}+\sum_{k=1}^{K}a_kz^{\alpha_k}\Big)\big(z\widehat{p}(z)-p_0\big)
+\Big(1+bz^{\beta}+\sum_{j=1}^{J}b_jz^{\beta_j}\Big)\mathcal{A}\widehat{p}(z)=\widehat{f}(z).
\end{displaymath}
Recalling the definition of $\eta(z)$ from \eqref{eq:denoteeta} and introducing
\begin{equation}\label{eq:denoteH}
   \mathcal{H}(z):=z^{-1}\eta(z)\big(\eta(z)I+\mathcal{A}\big)^{-1},
\end{equation}
we can express the solution in the Laplace domain as
\begin{equation}\label{eq:integrand}
\widehat{p}(z)
=\mathcal{H}(z)p_0+\Big(z^{\gamma-1}+az^{\alpha+\gamma-1}+\sum_{k=1}^{K}a_kz^{\alpha_k+\gamma-1}\Big)^{-1}
           \mathcal{H}(z)\widehat{f}(z).
\end{equation}
Applying the inverse Laplace transform and invoking Cauchy's theorem along with Remark \ref{thm:remark1}, we obtain
\begin{equation}\label{eq:integral}
\begin{aligned}
p(t)
=\mathcal{E}(t)p_0+\int_{0}^{t}\mathcal{F}(s)f(t-s)ds, \quad t>0,
\end{aligned}
\end{equation}
where the solution operators $\mathcal{E}(t)$ and $\mathcal{F}(t): L^{2}(\Omega)\rightarrow L^{2}(\Omega)$, are defined by
\begin{equation}\label{eq:solvers}
\mathcal{E}(t):=\frac{1}{2\pi i}\int_{\Gamma_{\theta,\delta}}e^{zt}\mathcal{H}(z)d,\quad
\mathcal{F}(t):=\frac{1}{2\pi i}\int_{\Gamma_{\theta,\delta}}e^{zt}g(z)\mathcal{H}(z)dz,
\end{equation}
with $g(z):=(z^{\gamma-1}+az^{\alpha+\gamma-1}+\sum_{k=1}^{K}a_kz^{\alpha_k+\gamma-1})^{-1}$. The representation \eqref{eq:integral} is referred to as the \emph{mild solution} of Problem \eqref{eq:problem}.

Having derived the solution representation, we now examine the analytic properties of $\eta(z)$ and $\mathcal{H}(z)$, the solution operators $\mathcal{E}(t)$ and $\mathcal{F}(t)$. Understanding these properties is key to establishing a rigorous theory for the problem.

\begin{lemma}[Analyticity]\label{Lem:function}
Let $\eta(z)$ and $\mathcal{H}(z)$ be defined as in \eqref{eq:denoteeta} and \eqref{eq:denoteH}, respectively. Assume the parameters in \eqref{eq:parameters} satisfy the conditions stated in Prop. \ref{prop:PDF}, and that  $\sum_{j=1}^{J}\beta_j\leq\sum_{k=1}^{K}\alpha_k$ for $k=0,1,...,K$, $j=0,1,...,J$, with $K$, $J\in\mathbb{N}_{+}$. Then, for all $z\in\Sigma_{\theta,\delta}$ with $\theta\in(\frac{\pi}{2},\pi)$ and $\delta>1$, the following hold:
\begin{description}
  \item [($i$)] $\eta(z)\in\Sigma_\theta$, and there exist constants $c$, $C>0$ such that $c|z|^{\alpha-\beta+\gamma}\leq|\eta(z)|\leq C|z|^{\alpha-\beta+\gamma}$.
  \item [($ii$)] The operator $\mathcal{H}(z)$:$L^{2}(\Omega)\rightarrow L^{2}(\Omega)$ is well-defined and bounded for $z\in\Sigma_{\theta,\delta}$, and it belongs to $\Sigma_\theta$, satisfying
     \begin{equation}\label{eqs:oppp}
      \begin{aligned}
      &\|\mathcal{H}(z)\|_{L^{2}(\Omega)\rightarrow L^{2}(\Omega)}\leq C |z|^{-1},
      &&\big\|g(z)\mathcal{H}(z)\big\|_{L^{2}(\Omega)\rightarrow L^{2}(\Omega)}\leq C|z|^{-\alpha-\gamma},\\
      &\|\mathcal{A}\mathcal{H}(z)\|_{L^{2}(\Omega)\rightarrow L^{2}(\Omega)}\leq C|z|^{\alpha-\beta+\gamma-1},
      &&\big\|g(z)\mathcal{A}\mathcal{H}(z)\big\|_{L^{2}(\Omega)\rightarrow L^{2}(\Omega)}\leq C|z|^{-\beta},
      \end{aligned}
    \end{equation}
  where $g(z):=(z^{\gamma-1}+az^{\alpha+\gamma-1}+\sum_{k=1}^{K}a_kz^{\alpha_k+\gamma-1})^{-1}$.
\end{description}
\end{lemma}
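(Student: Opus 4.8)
The plan is to establish the two claims by analyzing the factorizing structure of $\eta(z)$ and then applying the resolvent estimate of Lemma~\ref{Lem:Resolvent}. For part ($i$), I would write $\eta(z) = z^{\gamma} P(z) Q(z)^{-1}$ with $P(z) := 1 + a z^{\alpha} + \sum_{k=1}^{K} a_k z^{\alpha_k}$ and $Q(z) := 1 + b z^{\beta} + \sum_{j=1}^{J} b_j z^{\beta_j}$. For $z \in \Sigma_{\theta,\delta}$ with $\delta > 1$, the dominant term in $P(z)$ is $a z^{\alpha}$ (since $\alpha_k < \alpha$ and $|z| > 1$), so $c |z|^{\alpha} \leq |P(z)| \leq C|z|^{\alpha}$; similarly $c|z|^{\beta} \leq |Q(z)| \leq C|z|^{\beta}$, using $b > 0$ and $|z| > 1$. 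Multiplying through gives the two-sided bound $c|z|^{\alpha - \beta + \gamma} \leq |\eta(z)| \leq C|z|^{\alpha-\beta+\gamma}$. For the sectorial claim $\eta(z) \in \Sigma_{\theta}$, I would use that $\eta(\xi) \in \mathcal{CBF}$ for $\xi = \Re(z) > 0$ — established in the proof of Prop.~\ref{prop:PDF} under the stated parameter constraints — and recall that a complete Bernstein function maps each sector $\Sigma_{\varphi}$ into itself for $\varphi \in (0,\pi)$; hence $\arg \eta(z)$ and $\arg z$ have the same sign with $|\arg \eta(z)| \leq |\arg z| < \theta$. The extra hypothesis $\sum_{j} \beta_j \leq \sum_k \alpha_k$ together with $\beta \le \gamma$ and $\alpha + \gamma + \sum_k \alpha_k \le 1$ ensures the relevant exponents stay in $(0,1)$ so that the $\mathcal{CBF}$ machinery applies cleanly.

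For part ($ii$), the key observation is that $\eta(z) \in \Sigma_{\theta}$, so $\eta(z)$ lies in the sector of analyticity of the resolvent and Lemma~\ref{Lem:Resolvent} applies with the spectral parameter $\eta(z)$ in place of $z$:
\begin{equation*}
\big\| (\eta(z) I + \mathcal{A})^{-1} \big\|_{L^2 \to L^2} \leq C(1 + |\eta(z)|)^{-1} \leq C |\eta(z)|^{-1} \leq C |z|^{-(\alpha - \beta + \gamma)},
\end{equation*}
where the last step uses the lower bound from ($i$) and $\delta > 1$. Then, from the definition $\mathcal{H}(z) = z^{-1} \eta(z) (\eta(z)I + \mathcal{A})^{-1}$, I get $\|\mathcal{H}(z)\|_{L^2 \to L^2} \leq |z|^{-1} |\eta(z)| \cdot C|\eta(z)|^{-1} = C|z|^{-1}$, giving the first bound. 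For $\|\mathcal{A}\mathcal{H}(z)\|$, I would write $\mathcal{A}(\eta(z)I + \mathcal{A})^{-1} = I - \eta(z)(\eta(z)I+\mathcal{A})^{-1}$, which is bounded uniformly (indeed $\|\mathcal{A}(\eta(z)I+\mathcal{A})^{-1}\| \le C$ since both $I$ and $\eta(z)(\eta(z)I+\mathcal{A})^{-1}$ are), so $\|\mathcal{A}\mathcal{H}(z)\| \leq |z|^{-1} |\eta(z)| \cdot C = C|z|^{\alpha - \beta + \gamma - 1}$ by the upper bound in ($i$).

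For the two bounds involving $g(z) = \big(z^{\gamma-1} + a z^{\alpha + \gamma - 1} + \sum_k a_k z^{\alpha_k + \gamma - 1}\big)^{-1} = z^{1-\gamma} P(z)^{-1}$, I would note $|g(z)| = |z|^{1-\gamma} |P(z)|^{-1} \le C |z|^{1-\gamma} |z|^{-\alpha} = C|z|^{1 - \gamma - \alpha}$ for $|z| > 1$. Combining with the two already-established bounds: $\|g(z)\mathcal{H}(z)\| \le C|z|^{1-\gamma-\alpha} \cdot |z|^{-1} = C|z|^{-\alpha-\gamma}$, and $\|g(z)\mathcal{A}\mathcal{H}(z)\| \le C|z|^{1-\gamma-\alpha}\cdot|z|^{\alpha-\beta+\gamma-1} = C|z|^{-\beta}$, as claimed. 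Well-definedness and boundedness of $\mathcal{H}(z)$ on $L^2(\Omega)$ for $z \in \Sigma_{\theta,\delta}$ follows since $\eta(z)$ avoids the nonpositive real axis (being in $\Sigma_{\theta}$) and $\mathcal{A}$ is positive definite self-adjoint, so $\eta(z)I + \mathcal{A}$ is boundedly invertible. The membership $\mathcal{H}(z) \in \Sigma_{\theta}$ I interpret as the range/numerical-range statement inherited from $\eta(z) \in \Sigma_{\theta}$ and the resolvent being sectorial.

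The main obstacle I anticipate is twofold: first, rigorously justifying that the $\mathcal{CBF}$ property of $\eta$ on the positive real axis upgrades to the sectorial mapping property $\eta(\Sigma_{\theta,\delta}) \subset \Sigma_{\theta}$ on the full sector — this requires invoking the standard analytic continuation / sector-mapping characterization of complete Bernstein functions rather than just their behavior on $(0,\infty)$, and care is needed that the factorization $\eta = z^{\gamma} P Q^{-1}$ composed of $\mathcal{CBF}$-powers genuinely lands in $\mathcal{CBF}$ under the exponent constraints (this is exactly where the hypothesis $\sum_j \beta_j \le \sum_k \alpha_k$ and the Prop.~\ref{prop:PDF} constraints are consumed, via \cite[Thm.~1]{Bazhlekova24} and Remark~\ref{Rmk:FourF}). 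Second, tracking the constants carefully so that the $\delta > 1$ assumption (rather than $\delta > 0$) is genuinely used to absorb the lower-order terms $z^{\alpha_k}, z^{\beta_j}$ and to replace $1 + |\eta(z)|$ by $|\eta(z)|$ — the estimates degrade near $|z| = \delta$ otherwise, but since $\delta > 1$ is fixed this is only a matter of bookkeeping, not a real difficulty.
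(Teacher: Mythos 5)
Your proposal is correct and its part ($ii$) coincides with the paper's argument essentially line for line: the same resolvent estimate applied at the spectral point $\eta(z)$, the same identity $\mathcal{A}(\eta(z)I+\mathcal{A})^{-1}=I-\eta(z)(\eta(z)I+\mathcal{A})^{-1}$, and the same bound $|g(z)|\leq C|z|^{1-\alpha-\gamma}$ obtained from the lower bound on $|P(z)|$. The divergence is in part ($i$). For the sectorial inclusion $\eta(\Sigma_{\theta,\delta})\subseteq\Sigma_\theta$, the paper does not go through the complete Bernstein function machinery at all: it multiplies numerator and denominator by $\mathcal{D}(\overline{z})$ so that the denominator becomes the positive real number $|\mathcal{D}(z)|^2$, and then bounds $|\arg\eta(z)|$ directly by summing the arguments of the three remaining factors, using the constraint $\alpha+\gamma+\sum_k\alpha_k\leq1$. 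Your route instead invokes $\eta\in\mathcal{CBF}$ (from Prop.~\ref{prop:PDF}) together with the sector-mapping property $|\arg f(z)|\leq|\arg z|$ of complete Bernstein functions; this is valid and arguably gives a sharper conclusion ($|\arg\eta(z)|\leq|\arg z|$ rather than merely $\leq\theta$), but it imports the Nevanlinna--Pick characterization where the paper's computation is elementary and self-contained --- you correctly flag this as the point needing a citation. For the two-sided modulus bound, the paper splits $\Sigma_{\theta,\delta}$ into an unbounded part $|z|>R$ (where the dominant-term argument is carried out with explicit constants) and a compact annulus $\delta\leq|z|\leq R$ (handled by continuity and compactness), whereas you assert the dominant-term bounds uniformly for $|z|>\delta>1$; your version can be made rigorous because all terms of $P(z)$ share arguments of one sign inside a convex cone of half-angle $\alpha\theta/2<\pi/2$, giving $|P(z)|\geq\cos(\alpha\theta/2)\,a|z|^{\alpha}$ without any largeness assumption on $|z|$, but as written the lower bound near $|z|=\delta$ is the one step you gloss over and that the paper's compactness splitting is designed to cover. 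Either approach buys the same estimates; the paper's is more pedestrian but fully explicit, yours is shorter once the CBF and convex-cone facts are granted.
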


\begin{proof}
For part ($i$), define $\mathcal{N}(z):=1+az^{\alpha}+\sum_{k=1}^{K}a_kz^{\alpha_k}$ and $\mathcal{D}(z):=1+bz^{\beta}+\sum_{j=1}^{J}b_jz^{\beta_j}$. For $z\in\Sigma_{\theta}$, it is clear that $\mathcal{D}(z)\neq0$. Let $\overline{\mathcal{D}(z)}$ denote the conjugate of $\mathcal{D}(z)$, i.e.,  $\overline{\mathcal{D}(z)}:=1+b\overline{z^{\beta}}+\sum_{j=1}^{J}b_j\overline{z^{\beta_j}}$. Within the sector $|\arg(z)|<\theta<\pi$, the identities $\overline{z^{\beta}}=(\overline{z})^{\beta}$ and $\overline{z^{\beta_j}}=(\overline{z})^{\beta_j}$ hold. Therefore,  $\overline{\mathcal{D}(z)}=D(\overline{z})
:=1+b(\overline{z})^{\beta}+\sum_{j=1}^{J}b_j(\overline{z})^{\beta_j}$. To analyze $\eta(z)$, multiply both numerator and denominator by $\mathcal{D}(\overline{z})$, yielding
\begin{displaymath}
\eta(z)=z^{\gamma}\,\frac{\mathcal{N}(z)}{\mathcal{D}(z)}
=z^{\gamma}\,\frac{\mathcal{N}(z)\mathcal{D}(\overline{z})}{\mathcal{D}(z)\mathcal{D}(\overline{z})}
=z^{\gamma}\,\frac{\big(1+az^{\alpha}+\sum_{k=1}^{K}a_kz^{\alpha_k}\big)
\big(1+b(\overline{z})^{\beta}+\sum_{j=1}^{J}b_j(\overline{z})^{\beta_j}\big)}
{\big|1+bz^{\beta}+\sum_{j=1}^{J}b_jz^{\beta_j}\big|^{2}}.
\end{displaymath}
Now, let $z=|z|e^{i\varphi}\in\Sigma_{\theta}$ with $0\leq \varphi\leq\theta$ (or $-\theta\leq-\varphi\leq0$), and define $\mathcal{B}:=1/(\mathcal{D}(z)\mathcal{D}(\overline{z}))
=1/|1+bz^{\beta}+\sum_{j=1}^{J}b_jz^{\beta_j}|^{2}>0$. Under the parameter conditions from Prop. \ref{prop:PDF} and the assumption $\sum_{j=1}^{J}\beta_j\leq\sum_{k=1}^{K}\alpha_k$, we have
$\eta(z)=\mathcal{B}|z|^{\gamma}e^{i\gamma\varphi}\big(1+|z|^{\alpha}e^{i\alpha\varphi}
+\sum_{k=1}^{K}a_k|z|^{\alpha_k}e^{i\alpha_k\phi}\big)
\big(1+\chi^{\beta}|z|^{\beta}e^{-i\beta\varphi}+\sum_{j=1}^{J}b_j|z|^{\beta_j}e^{-i\beta_j\phi}\big)$. The argument of $\eta(z)$ satisfies
$|\arg(\eta(z))|=\big|\arg(e^{i\gamma\varphi})+\arg(1+|z|^{\alpha}e^{i\alpha\varphi}
+\sum_{k=1}^{K}a_k|z|^{\alpha_k}e^{i\alpha_k\phi})+
\arg(1+\chi^{\beta}r^{\beta}e^{-i\beta\varphi}+\sum_{j=1}^{J}b_j|z|^{\beta_j}e^{-i\beta_j\phi})\big|
\leq(\alpha+\gamma+\sum_{k=1}^{K}\alpha_k)\varphi\leq\theta$. Hence, for all $z\in\Sigma_{\theta}$, we conclude that $\eta(z)$ belongs to $\Sigma_\theta$.

Next, we analyze the asymptotic behavior of $\eta(z)$ in the sector $\Sigma_{\theta, \delta}$, where $\theta\in(\frac{\pi}{2},\pi)$ and $\delta>1$. As $|z|\rightarrow\infty$, the dominant terms in $\mathcal{N}(z)$ and $\mathcal{D}(z)$ are $a|z|^{\alpha}$ and $b|z|^{\beta}$, respectively, implying that $\eta(z)$ asymptotically behaves like $|z|^{\alpha+\gamma-\beta}$. To the existence of constants $c, C>0$ such that the desired inequality holds, we divide the region $\Sigma_{\theta, \delta}$ into two regions: the unbounded part $\mathbb{O}:=\{z\in\Sigma_{\theta, \delta}: |z|>R\}$, for some sufficiently large $R>\delta$, and the compact part $\mathbb{K}:=\{z\in\Sigma_{\theta, \delta}:\delta\leq|z|\leq R\}$. We begin with the region $\mathbb{O}$. Since $\alpha_{k}<\alpha$ and $\beta_j<\beta$ for all $k$ and $j$, define $\epsilon:=\min_k\{\alpha-\alpha_k\}>0$, $\iota:=\min_j\{\beta-\beta_j\}>0$, so that $\alpha_k\leq\alpha-\epsilon$ and $\beta_j\leq\beta-\iota$. Let $A=\sum_{k=1}^{K} a_k$ and $B=\sum_{j}^{J} b_j$. Then for $|z|>1$,
\begin{displaymath}
\begin{aligned}
&|\mathcal{N}(z)|\leq a|z|^{\alpha}+1+A|z|^{\alpha-\epsilon},
&& |\mathcal{N}(z)|\geq a|z|^{\alpha}-1-A|z|^{\alpha-\epsilon},\\
&|\mathcal{D}(z)|\leq b|z|^{\beta}+1+B|z|^{\beta-\iota},
&& |\mathcal{D}(z)|\geq B|z|^{\beta}-1-B|z|^{\beta-\iota}.
\end{aligned}
\end{displaymath}
Choose $R>\delta>1$ large enough such that $a|z|^{\alpha}>2(1+A|z|^{\alpha-\epsilon})$ and $b|z|^{\beta}>2(1+B|z|^{\beta-\iota})$. Then we obtain $|\mathcal{N}(z)|\leq (3/2)a|z|^{\alpha}$, $|\mathcal{N}(z)|\geq(1/2)a|z|^{\alpha}$, $|\mathcal{D}(z)|\leq(3/2)a|z|^{\alpha}$, and $|\mathcal{D}(z)|\geq (1/2)b|z|^{\beta}$. It follows that
\begin{displaymath}\left\{\begin{aligned}
|\eta(z)|&\leq|z|^{\gamma}\cdot\frac{(3/2)\,a|z|^{\alpha}}{(1/2)\,b|z|^{\beta}}=\frac{3a}{b}|z|^{\gamma+\alpha-\beta},\\
|\eta(z)|&\geq|z|^{\gamma}\cdot\frac{(1/2)\,a|z|^{\alpha}}{(3/2)\,b|z|^{\beta}}=\frac{a}{3b}|z|^{\gamma+\alpha-\beta}.
\end{aligned}\right.\end{displaymath}
Thus, for $z\in\mathbb{O}$, there exist $c_{\mathbb{O}}=a/3b>0$ and $C_{\mathbb{O}}=3a/b>0$ such that $c_{\mathbb{O}}|z|^{\gamma+\alpha-\beta}\leq|\eta(z)|\leq C_{\mathbb{O}}|z|^{\gamma+\alpha-\beta}$. Now consider the compact set $\mathbb{K}$. Since $\mathcal{D}(z)\neq0$ and $\mathcal{N}(z)\neq0$ on $\mathbb{K}$, and $\eta(z)$ is continuous and nonzero on this closed bounded set, we have $m_{\mathbb{K}}:=\min_{z\in\mathbb{K}}|\eta(z)|>0$ and $M_{\mathbb{K}}:=\max_{z\in\mathbb{K}}|\eta(z)|<\infty$. Define $d(z):=|z|^{\gamma+\alpha-\beta}$, which is continuous and positive on $\mathbb{K}$, so $c_d:=\min_{z\in\mathbb{K}}d(z)>0$ and $C_d:=\max_{z\in\mathbb{K}}d(z)<\infty$. The $|\eta(z)|/d(z)$ is also continuous and positive on $\mathbb{K}$, so there exists constants $c_{\mathbb{K}}=\min_{z\in\mathbb{K}}|\eta(z)|/d(z)>0$ and $C_{\mathbb{K}}\max_{z\in\mathbb{K}}|\eta(z)|/d(z)>0$ such that $c_{\mathbb{K}}d(z)\leq|\eta(z)|\leq C_{\mathbb{K}}d(z)$. Finally, taking $c=\min\{c_{\mathbb{K}},c_{\mathbb{O}}\}>0$ and $C=\max\{C_{\mathbb{K}},C_{\mathbb{O}}\}>0$, we conclude that for all $z\in\Sigma_{\theta,\delta}$,  $c|z|^{\gamma+\alpha-\beta}\leq|\eta(z)|\leq C|z|^{\gamma+\alpha-\beta}$. This completes the proof of part ($i$) of Lemma~\ref{Lem:function}.

For part ($ii$), it follows from ($i$) that $\eta(z)\in \Sigma_{\theta}$ for all $z\in\Sigma_{\theta,\delta}$. Consequently, by Lemma \ref{Lem:Resolvent}, the resolvent operator $(\eta(z)I+\mathcal{A})^{-1}$ satisfies
\begin{equation}\label{eq:resolvent2}
\big\|(\eta(z)I+\mathcal{A})^{-1}\big\|_{L^{2}(\Omega)\rightarrow L^{2}(\Omega)}
\leq C\big(1+|\eta(z)|\big)^{-1},\quad\forall~ z\in\Sigma_{\theta,\delta}.
\end{equation}
Now, consider the identity
$z\eta^{-1}(z)(\eta(z)I+\mathcal{A})u=v$. Rewriting this, we obtain that  $u=z^{-1}\eta(z)(\eta(z)I+\mathcal{A})^{-1}v$.
Taking the $L^{2}$-norm on both sides and applying the resolvent estimate \eqref{eq:resolvent2} together with the bounds from Lemma \ref{Lem:function} ($i$), we deduce
$\|u\|_{L^{2}(\Omega)}\leq C|z|^{-1}\|v\|_{L^{2}(\Omega)}$.
This shows that the operator $H(z):$ $L^{2}(\Omega)\rightarrow L^{2}(\Omega)$ is well-defined for $z\in\Sigma_{\theta,\delta}$, and satisfies $\|\mathcal{H}(z)\|_{L^{2}(\Omega)\rightarrow L^{2}(\Omega)}\leq C |z|^{-1}$.
Next, observe that $\mathcal{A}(\eta(z)I+\mathcal{A})^{-1}=I-\eta(z)(\eta(z)I+\mathcal{A})^{-1}$. Using this identity and the estimate from Lemma \ref{Lem:function} ($i$), we obtain
\begin{equation*}
\|\mathcal{A}\mathcal{H}(z)\|_{L^{2}(\Omega)\rightarrow L^{2}(\Omega)}
=\big\|\eta(z)\,z^{-1}-\eta(z)\mathcal{H}(z)\big\|_{L^{2}(\Omega)\rightarrow L^{2}(\Omega)}
\leq C\big|\eta(z)\,z^{-1}\big|
\leq C|z|^{\alpha-\beta+\gamma-1}.
\end{equation*}
Finally, using the bound $|g(z)|\leq C|z|^{1-\alpha-\gamma}$, the remaining estimates in \eqref{eqs:oppp} follow directly. This completes the proof of part ($ii$) of Lemma \ref{Lem:function}.
\end{proof}

\begin{remark}\label{thm:remark1}
The solution $\widehat{p}(r,z)$ in the Laplace domain, defined in \eqref{eq:integrand}, is analytic for $z\in\Sigma_{\theta}$ with $\theta\in(\pi/2,\pi)$. Its singularities may arise from two sources, the factor $z^{-1}$ and the operator $(\eta(z)I+A)^{-1}$. According to Lemma \ref{Lem:function} ($i$), $(\eta(z)I+A)^{-1}$ is analytic for $z\in\Sigma_{\theta,\delta}$. Thus, the only singularity of $\widehat{u}(r,z)$ is a simple pole at $z=0$ introduced by the factor $z^{-1}$.
\end{remark}

From now on, unless otherwise specified, we take $\theta\in(\frac{\pi}{2},\pi)$. Next, we employ the Laplace transform method to establish the Sobolev regularity of the solution to Problem \eqref{eq:problem}.

\begin{lemma}[Prior estimates]\label{Lem:Lemma1}
Let $\mathcal{E}(t)$ and $\mathcal{F}(t)$ be defined as in \eqref{eq:solvers}. Assume the parameters in \eqref{eq:parameters} satisfy the conditions stated in Lemma \ref{Lem:function}. Then for $t>0$, $m\in\mathbb{N}_{+}$, and $z\in\Sigma_{\theta,\delta}$ with $\theta\in(\frac{\pi}{2},\pi)$ and $\delta>1$, the following statements are valid:
\begin{description}
  \item[($i$)] The operator $\mathcal{E}(t)$ satisfies
     \begin{equation}
      \big\|\mathcal{E}(t)\big\|_{\Psi}
      +t^{m}\big\|\partial_{t}^{m}\mathcal{E}(t)\big\|_{\Psi}
      +t^{m+\alpha-\beta+\gamma}\big\|\mathcal{A}\partial_{t}^{m}\mathcal{\mathcal{E}}(t)\big\|_{\Psi}
      \leq C,
     \end{equation}
     where $\|\cdot\|_{\Psi}$ denotes the $L^2(\Omega)\rightarrow L^2(\Omega)$ operator norm.
  \item[($ii$)] The operator $\mathcal{F}(t)$ satisfies
      \begin{equation}\begin{aligned}
      t^{1-\alpha-\gamma}\big\|\mathcal{F}(t)\big\|_{\Psi}
        +t^{m+1-\alpha-\gamma}\big\|\partial_{t}^{m}\mathcal{F}(t)\big\|_{\Psi}
      +t^{m+1-\beta}\big\|\mathcal{A}\partial_{t}^{m}\mathcal{F}(t)\big\|_{\Psi}
      \leq C.
      \end{aligned}\end{equation}
\end{description}
\end{lemma}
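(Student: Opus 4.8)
The plan is to estimate the operator-valued Bromwich integrals \eqref{eq:solvers} directly, exploiting the homogeneity in $|z|$ of the four resolvent bounds collected in Lemma \ref{Lem:function}(ii) together with a $t$-dependent deformation of the integration contour.

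First I would record the structural facts needed to manipulate the integrals. By Lemma \ref{Lem:function}(i) and Remark \ref{thm:remark1}, the integrands $e^{zt}\mathcal{H}(z)$ and $e^{zt}g(z)\mathcal{H}(z)$ are analytic on $\Sigma_{\theta}\setminus\{0\}$; on the rays of any contour $\Gamma_{\theta,\delta'}$ one has $|e^{zt}|=e^{-|\cos\theta|\,|z|\,t}$ (with $\cos\theta<0$ since $\theta\in(\pi/2,\pi)$), so these integrals, and all integrals obtained by inserting a power $z^{m}$ or the closed operator $\mathcal{A}$, converge absolutely and locally uniformly in $t>0$. Consequently $\mathcal{E}(t),\mathcal{F}(t)$ are smooth in $t$ on $(0,\infty)$, the derivative $\partial_{t}^{m}$ amounts to inserting a factor $z^{m}$ under the integral, and---by Cauchy's theorem, the contributions of the connecting arcs at infinity vanishing by the exponential decay---the value is independent of the radius $\delta'$. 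In particular each of $\partial_{t}^{m}\mathcal{E}$, $\mathcal{A}\partial_{t}^{m}\mathcal{E}$, $\partial_{t}^{m}\mathcal{F}$, $\mathcal{A}\partial_{t}^{m}\mathcal{F}$ is continuous on $(0,\infty)$, so the claimed bounds are trivial on the compact range $[1,T]$; it remains to treat $0<t\le 1$.

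For $0<t\le1$ I would shift the contour in \eqref{eq:solvers} to $\Gamma_{\theta,1/t}$, whose radius $1/t\ge1$ stays within the regime of Lemma \ref{Lem:function}. Parametrizing the rays as $z=\bar r e^{\pm i\theta}$ with $\bar r\ge 1/t$ and the arc by $|z|=1/t$, every term reduces to the scalar estimate
\begin{equation*}
\int_{1/t}^{\infty}\bar r^{\rho-1}e^{-|\cos\theta|\bar r t}\,d\bar r
=t^{-\rho}\int_{1}^{\infty}s^{\rho-1}e^{-|\cos\theta|s}\,ds=C\,t^{-\rho},
\end{equation*}
which holds for every real $\rho$ because the substitution $s=\bar r t$ renders the lower limit $t$-independent and $\int_{1}^{\infty}s^{\rho-1}e^{-|\cos\theta|s}\,ds<\infty$; the arc contributes the same order $t^{-\rho}$, its length being $O(t^{-1})$, $|e^{zt}|\le e$ there, and the integrand $O(t^{1-\rho})$. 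Feeding in $\|\mathcal{H}(z)\|_{\Psi}\le C|z|^{-1}$ (so $\rho=m$) gives $\|\partial_{t}^{m}\mathcal{E}(t)\|_{\Psi}\le Ct^{-m}$, while the undifferentiated case $\rho=0$ gives $\|\mathcal{E}(t)\|_{\Psi}\le C$, the lower limit $1$ being exactly what prevents the logarithmic divergence a fixed contour would produce here. Likewise $\|\mathcal{A}\mathcal{H}(z)\|_{\Psi}\le C|z|^{\alpha-\beta+\gamma-1}$ yields $\|\mathcal{A}\partial_{t}^{m}\mathcal{E}(t)\|_{\Psi}\le Ct^{-m-\alpha+\beta-\gamma}$; $\|g(z)\mathcal{H}(z)\|_{\Psi}\le C|z|^{-\alpha-\gamma}$ yields $\|\partial_{t}^{m}\mathcal{F}(t)\|_{\Psi}\le Ct^{\alpha+\gamma-m-1}$ (and $\|\mathcal{F}(t)\|_{\Psi}\le Ct^{\alpha+\gamma-1}$ for $m=0$); and $\|g(z)\mathcal{A}\mathcal{H}(z)\|_{\Psi}\le C|z|^{-\beta}$ yields $\|\mathcal{A}\partial_{t}^{m}\mathcal{F}(t)\|_{\Psi}\le Ct^{\beta-m-1}$. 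Summing the three contributions in (i) and the three in (ii) gives the two displayed inequalities.

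The main obstacle is not any individual estimate---each collapses to the one-line scalar integral above---but selecting the contour: keeping a fixed radius $\delta$ is fatal, since the arc then contributes $e^{\delta t}$, whereas letting the radius scale like $1/t$ restores the scale invariance that produces the clean powers of $t$ and, since $1/t\ge1$ for $t\le1$, keeps us where Lemma \ref{Lem:function} applies (the large-$t$ range being disposed of separately by compactness). The remaining technical points---differentiation under the integral, commuting the closed operator $\mathcal{A}$ past the integral, and the homotopy of contours---are routine consequences of the exponential decay $|e^{zt}|=e^{-|\cos\theta|\,|z|\,t}$ along the rays.
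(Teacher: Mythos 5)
Your proposal is correct and follows essentially the same route as the paper: deform the contour to radius $\delta=1/t$, insert the four bounds from Lemma \ref{Lem:function}(ii), and reduce each term via the substitution $s=\bar r t$ to a $t$-independent convergent integral on $[1,\infty)$, with the arc contributing the same power of $t$. Your treatment is slightly more systematic than the paper's (a single scalar estimate for general exponent $\rho$, plus an explicit disposal of the range $t\ge 1$ and of the differentiation-under-the-integral and contour-independence steps), but the substance is identical.
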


\begin{proof}
For part ($i$), using Lemma \ref{Lem:function} and the definition of $\mathcal{E}(t)$, we take $\delta=1/t$ for $z\in\Sigma_{\theta,\delta}$. The following estimates hold
\begin{equation*}\begin{aligned}
\big\|\mathcal{E}(t)\big\|_{L^{2}(\Omega)\rightarrow L^{2}(\Omega)}
&\leq  c\int_{\Gamma_{\theta,\delta}}e^{t|z|\cos(\theta)}
      \big\|\mathcal{H}(z)\big\|_{L^{2}(\Omega)\rightarrow L^{2}(\Omega)}|dz|
\leq  c\int_{\Gamma_{\theta,\delta}}e^{t|z|\cos(\theta)}|z|^{-1}|dz| \\
&\leq  C\int_\delta^{\infty}e^{tr\cos(\theta)}r^{-1}dr
     + C\int_{-\theta}^{\theta}e^{\delta t\cos(\varphi)} d\varphi \\
    &\qquad(let~s=tr, {\rm with}~ s\in(1,\infty)~ {\rm and}~ r=s/t)\\
&\leq  C\int_{1}^{\infty}e^{s\cos\theta}s^{-1}ds
          + C\int_{-\theta}^{\theta}e^{\delta T}d\varphi
\leq C.
\end{aligned}\end{equation*}
Similarly, for the time derivative, we have
\begin{equation*}\begin{aligned}
\big\|\partial_t\mathcal{E}(t)\big\|_{L^{2}(\Omega)\rightarrow L^{2}(\Omega)}
\leq C \int_{\Gamma_{\theta,\delta}}e^{t|z|\cos(\theta)}|z|
       \big\|\mathcal{H}(z)\big\|_{L^{2}(\Omega)\rightarrow L^{2}(\Omega)}|dz|
\leq C\,t^{-1}.
\end{aligned}\end{equation*}
By similar reasoning, we obtain $\big\|\partial_t^{m}\mathcal{E}(t)\big\|_{L^{2}(\Omega)\rightarrow L^{2}(\Omega)}\leq Ct^{-m}$.

For the operator $\mathcal{A}\partial_{t}^{m}\mathcal{E}(t)$, we have
\begin{equation*}\begin{aligned}
\big\|\mathcal{A}\partial_{t}^{m}\mathcal{E}(t)\big\|_{L^{2}(\Omega)\rightarrow L^{2}(\Omega)}
\leq C \int_{\Gamma_{\theta,\delta}}e^{t|z|\cos(\theta)}|z|^{m}
        \|\mathcal{A}\mathcal{H}(z)\|_{L^{2}(\Omega)\rightarrow L^{2}(\Omega)}|dz|
\leq C\,t^{\beta-\alpha-\gamma-m}.
\end{aligned}\end{equation*}
This completes the proof of part ($i$) of Lemma \ref{Lem:Lemma1}. Part ($ii$) follows by similar arguments.
\end{proof}

We now turn to analyze the Sobolev regularity of the solution to Problem \eqref{eq:problem}. The main result is stated in the following theorem.

\begin{theorem}[Sobolev regularity]\label{thm:theom1}
Let $m\in\mathbb{N}_{+}$, and let $p(t)$ be the solution to Problem \eqref{eq:problem}. Assume the parameters in \eqref{eq:parameters} satisfy the conditions of Lemma \ref{Lem:function}. Then for $0<t\leq T$ and $l=0,1$, the following estimates hold:
\begin{description}
  \item[($i$)] If $p_0\in L^{2}(\Omega)$, $f(t,r)\in AC([0,T];L^{1}(\Omega))$ and $\partial^j_tp(t,r)|_{t=0}:= p^{(j)}(0)\in C(0,T; L^{2}(\Omega))$ for $j=0,1,...,m-1$, with $\int_{0}^{t} \|\partial^m_s f(s,r)\|_{L^{2}(\Omega)}ds<\infty$, then there exists $C>0$ such that
     \begin{equation*}\begin{aligned}
     \big\|\mathcal{A}^{l}\partial^m_tp(t)\big\|_{L^{2}(\Omega)}
     &\leq C\Big(\sum_{j=0}^{m-1}t^{(1-l)(\alpha+\gamma)+l\beta+j-m}\big\|f^{(j)}(0)\big\|_{L^{2}(\Omega)}\\
         &\qquad~~+t^{(1-l)(\alpha+\gamma)+l\beta-1}\ast\big\|\partial^m_tf(t)\big\|_{L^{2}(\Omega)}
                 +t^{l(\beta-\alpha-\gamma)-m}\|p_0\|_{L^{2}(\Omega)}\Big).
    \end{aligned}\end{equation*}
  \item[($ii$)] If, in addition, $p_0\in D(\mathcal{A})={\dot{H}}^{2}(\Omega)$, then
     \begin{equation*}\begin{aligned}
     \big\|A^l\partial^m_tp(t)\big\|_{L^{2}(\Omega)}
     &\leq C\Big(\sum_{j=1}^{m-1}t^{(1-l)(\alpha+\gamma)+l\beta+j-m}\big\|f^{(j)}(0)\big\|_{L^{2}(\Omega)}\\
     &\qquad~~+t^{(1-l)(\alpha+\gamma)+l\beta-1}\ast\big\|\partial^m_tf(t)\big\|_{L^{2}(\Omega)}
     +t^{(1-l)(\alpha-\beta+\gamma)-m}\|p_0\|_{\dot{H}^{2}(\Omega)}\Big).
     \end{aligned}\end{equation*}
\end{description}
In part $(i)$, $AC([0,T])$ denotes the space of absolutely continuous functions on $[0, T]$, and the symbol `$\,\ast$' denotes temporal convolution.
\end{theorem}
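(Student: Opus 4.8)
The plan is to argue directly from the mild-solution representation \eqref{eq:integral}, $p(t)=\mathcal{E}(t)p_0+(\mathcal{F}\ast f)(t)$ with $\mathcal{E},\mathcal{F}$ as in \eqref{eq:solvers}, differentiating it $m$ times in $t$, applying $\mathcal{A}^l$, and estimating the resulting operator-valued contour integrals termwise by means of the kernel bounds of Lemma~\ref{Lem:function} and the operator estimates of Lemma~\ref{Lem:Lemma1}. In each contour integral I would take the radius of $\Gamma_{\theta,\delta}$ to be $\delta=1/t$, exactly as in the proof of Lemma~\ref{Lem:Lemma1}, so that $e^{zt}$ is bounded by $e^{t|z|\cos\theta}$ (with $\cos\theta<0$) on the two rays and by $O(1)$ on the circular arc. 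For the homogeneous part, since $z^m e^{zt}\mathcal{A}^l\mathcal{H}(z)$ is absolutely integrable over $\Gamma_{\theta,\delta}$ by Lemma~\ref{Lem:function}(ii), differentiation under the integral sign is legitimate and $\mathcal{A}^l\partial_t^m\mathcal{E}(t)p_0=\frac{1}{2\pi i}\int_{\Gamma_{\theta,\delta}}z^m e^{zt}\mathcal{A}^l\mathcal{H}(z)p_0\,dz$; then the bounds $\|\partial_t^m\mathcal{E}(t)\|_{\Psi}\le Ct^{-m}$ and $\|\mathcal{A}\partial_t^m\mathcal{E}(t)\|_{\Psi}\le Ct^{\beta-\alpha-\gamma-m}$ of Lemma~\ref{Lem:Lemma1}(i) produce the $\|p_0\|_{L^2}$-contribution with the exponent $t^{l(\beta-\alpha-\gamma)-m}$ of part~(i).

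For the inhomogeneous part one cannot put all $m$ derivatives on $\mathcal{F}$, because $\|\mathcal{F}(t)\|_{\Psi}\le Ct^{\alpha+\gamma-1}$ is only weakly singular at $t=0$; instead I would transfer the derivatives onto $f$ via the iterated convolution Leibniz rule, giving $\partial_t^m(\mathcal{F}\ast f)(t)=(\mathcal{F}\ast\partial_t^m f)(t)+\sum_{j=0}^{m-1}\mathcal{F}^{(m-1-j)}(t)\,f^{(j)}(0)$. Applying $\mathcal{A}^l$ and $\|\cdot\|_{L^2}$ and invoking Lemma~\ref{Lem:Lemma1}(ii), the convolution term is controlled by $\int_0^t\|\mathcal{A}^l\mathcal{F}(s)\|_{\Psi}\,\|\partial_t^m f(t-s)\|_{L^2}\,ds\le C\,t^{(1-l)(\alpha+\gamma)+l\beta-1}\ast\|\partial_t^m f(t)\|_{L^2}$, and each boundary term by $\|\mathcal{A}^l\mathcal{F}^{(m-1-j)}(t)\|_{\Psi}\,\|f^{(j)}(0)\|_{L^2}\le C\,t^{(1-l)(\alpha+\gamma)+l\beta+j-m}\|f^{(j)}(0)\|_{L^2}$. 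Combining these with the homogeneous estimate yields part~(i); the stated hypotheses on $f$ (absolute continuity up to order $m$ and $\int_0^t\|\partial_s^m f(s)\|_{L^2}\,ds<\infty$) are exactly what makes the Leibniz rule valid and the weakly singular convolution finite up to the endpoint $s\to0$.

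For part~(ii), the improvement comes from transferring one factor of $\mathcal{A}$ onto $p_0$ inside $\mathcal{H}(z)p_0$ of \eqref{eq:denoteH}. Using the resolvent identity $\eta(z)(\eta(z)I+\mathcal{A})^{-1}=I-\mathcal{A}(\eta(z)I+\mathcal{A})^{-1}$ together with $\mathcal{A}(\eta(z)I+\mathcal{A})^{-1}p_0=(\eta(z)I+\mathcal{A})^{-1}\mathcal{A}p_0$ for $p_0\in D(\mathcal{A})$, one obtains $\mathcal{H}(z)p_0=z^{-1}p_0-z^{-1}(\eta(z)I+\mathcal{A})^{-1}\mathcal{A}p_0$ and $\mathcal{A}\mathcal{H}(z)p_0=z^{-1}\eta(z)(\eta(z)I+\mathcal{A})^{-1}\mathcal{A}p_0$. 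The constant term $z^{-1}p_0$ has vanishing $t$-derivatives for $m\ge1$ (equivalently, $\int_{\Gamma_{\theta,\delta}}z^{m-1}e^{zt}\,dz=0$ by closing the contour to the left), so it drops out of $\partial_t^m\mathcal{E}(t)p_0$; for the remaining pieces, Lemma~\ref{Lem:Resolvent} and Lemma~\ref{Lem:function}(i) give $\|\mathcal{A}^l\mathcal{H}(z)p_0\|_{L^2}\le C|z|^{-1}|\eta(z)|^{l}\big(1+|\eta(z)|\big)^{-1}\|p_0\|_{\dot{H}^{2}}\le C|z|^{-1-(1-l)(\alpha-\beta+\gamma)}\|p_0\|_{\dot{H}^{2}}$, which decays strictly faster than in part~(i) since $\alpha-\beta+\gamma>0$. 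Repeating the $\delta=1/t$ contour estimate of the homogeneous part with this sharper bound replaces $t^{l(\beta-\alpha-\gamma)-m}$ by $t^{(1-l)(\alpha-\beta+\gamma)-m}$, while the inhomogeneous contributions carry over unchanged from part~(i); this establishes part~(ii).

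The termwise estimates above are mechanical once the operator bounds of Lemmas~\ref{Lem:function} and~\ref{Lem:Lemma1} are available, so the genuine difficulty lies in the low-regularity bookkeeping: rigorously justifying differentiation under the contour integral and the iterated convolution Leibniz rule under the weak hypotheses on $p_0$ and $f$, in particular controlling the endpoint $t\to0$ where $\mathcal{F}$ blows up like $t^{\alpha+\gamma-1}$ and where the $f^{(j)}(0)$-boundary terms are generated. A secondary delicate point is getting the exponents in part~(ii) exactly right, which hinges on the resolvent-identity rewriting above and on verifying that the ``$z^{-1}p_0$'' remainder genuinely disappears from the contour integral.
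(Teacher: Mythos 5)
Your proposal is correct and follows essentially the same route as the paper: the same splitting into homogeneous and inhomogeneous parts estimated on the contour $\Gamma_{\theta,1/t}$ via Lemmas~\ref{Lem:function} and~\ref{Lem:Lemma1}, and for part~(ii) the same resolvent identity together with the vanishing of $\int_{\Gamma_{\theta,\delta}}z^{m-1}e^{zt}\,dz$. The only cosmetic difference is that you handle the source term by the time-domain convolution Leibniz rule, whereas the paper uses the equivalent Laplace-domain expansion $\widehat{f}(z)=\sum_{j=0}^{m-1}f^{(j)}(0)z^{-j-1}+\widehat{f^{(m)}}(z)z^{-m}$.
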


\begin{proof}
For part ($i$), let $m\in\mathbb{N}_{+}$ and $p_0\in L^{2}(\Omega)$. Starting from the integral representation of the solution in the Laplace domain, we have $p^{(m)}(t)=\frac{1}{2\pi i}\int_{\Gamma_{\theta,\delta}}e^{zt}z^{m}\big(\mathcal{H}(z)p_0+g(z)\mathcal{H}(z)\widehat{f}(z)\big)dz$, where   $g(z):=(z^{\gamma-1}+az^{\alpha+\gamma-1}+\sum_{k=1}^{K}a_kz^{\alpha_k+\gamma-1})^{-1}$. This yields
\begin{displaymath}
\big\|\partial_{t}^{m}p(t)\big\|_{L^{2}(\Omega)}
\leq C\big\|\partial_{t}^{m}\mathcal{E}(t)\big\|_{L^{2}(\Omega)}\big\|p_0\big\|_{L^{2}(\Omega)}
+C\big\|\mathcal{F}(t)\ast f(t)\big\|:=\widetilde{I}+\widetilde{II}.
\end{displaymath}
From Lemma \ref{Lem:Lemma1}, it follows that $\widetilde{I}\leq Ct^{-m}\|p_0\|_{L^{2}(\Omega)}$.
Under the given conditions on $f(t)$, we apply Taylor expansion to write $\widehat{f}(z)=\sum_{j=0}^{m-1}f^{(j)}(0)z^{-j-1}+\widehat{f}^{(m)}(z)z^{-m}$ with $z\in\Sigma_{\theta}$. Then,
\begin{equation*}\begin{aligned}
\widetilde{II}
& \leq C\int_{\Gamma_{\theta,\delta}}e^{|z|t\cos(\theta)}|z|^{m}
       \big\|g(z)\mathcal{H}(z)\big\|_{L^{2}(\Omega)\rightarrow L^{2}(\Omega)}\big\|\widehat{f}(z)\big\|_{L^{2}(\Omega)}|dz|\\
&\leq C \int_{\Gamma_{\theta,\delta}}e^{|z|t\cos(\theta)}|z|^{m-\alpha-\gamma}\Big(
      \sum_{j=0}^{m-1}\big\|f^{(j)}(0)\big\|_{L^{2}(\Omega)}|z|^{-j-1}
      +\big\|\widehat{f}^{(m)}(z)\big\|_{L^{2}(\Omega)}|z|^{-m}\Big)|dz|\\
&\leq
      C\sum_{j=0}^{m-1}t^{\alpha+\gamma+j-m}\big\|f^{(j)}(0)\big\|_{L^{2}(\Omega)}+
      Ct^{\alpha+\gamma-1}\ast\big\|\partial_{t}^{m}f(t)\big\|_{L^{2}(\Omega)}.
\end{aligned}\end{equation*}
Combining the estimates for $\widetilde{I}$ and $\widetilde{II}$ yields the desired bound for $\partial_{t}^{m}p(t)$. For the estimate of $\mathcal{A}\partial_{t}^{m}p(t)$, we apply a similar approach to derive
\begin{equation*}\begin{aligned}
&\big\|\mathcal{A}\partial_{t}^{m}p(t)\big\|_{L^{2}(\Omega)}\\
&\leq C\Big(\big\|\mathcal{A}\partial_{t}^{m}\mathcal{E}(t)\big\|_{L^{2}(\Omega)\rightarrow
        L^{2}(\Omega)}\big\|p_0\big\|_{L^{2}(\Omega)}
+\frac{1}{2\pi }\int_{\Gamma_{\theta.\delta}}e^{|z|t\cos(\theta)}|z|^{m}
        \big\|g(z)\mathcal{A}\mathcal{H}(z)\widehat{f}(z)\big\|_{L^{2}(\Omega)}|dz|\Big)\\
&\leq Ct^{\beta-\alpha-\gamma-m}\big\|p_0\big\|_{L^{2}(\Omega)}
      +C\Big(\sum_{j=0}^{m-1}t^{\beta+j-m}\big\|f^{(j)}(0)\big\|_{L^{2}(\Omega)}
      +t^{\beta-1}\ast\big\|\partial_{t}^{m}f(t)\big\|_{L^{2}(\Omega)}\Big).
\end{aligned}\end{equation*}
These estimates complete the proof of Thm. \ref{thm:theom1} ($i$).

For part ($ii$), since $\mathcal{A}$ is closed and self-adjoint, it commutes with the  resolvent $(\eta(z)I+\mathcal{A})^{-1}$, and hence with $\mathcal{H}(z)$. From \eqref{eq:integral}, we have
\begin{displaymath}\begin{aligned}
\partial_{t}^{m}p(t)
&=\frac{1}{2\pi i}\int_{\Gamma_{\theta,\delta}}e^{zt}z^m\mathcal{A}^{-1}\mathcal{H}(z)\mathcal{A}p_0dz
   +\frac{1}{2\pi i}\int_{\Gamma_{\theta,\delta}}e^{zt}z^mg(z)\mathcal{H}(z)\widehat{f}(z)dz\\
&=\frac{1}{2\pi i}\int_{\Gamma_{\theta,\delta}}e^{zt}
    z^{m-1}\eta(z)\mathcal{A}^{-1}\big(\eta(z)I+\mathcal{A}\big)^{-1}\mathcal{A}p_0dz
   +\frac{1}{2\pi i}\int_{\Gamma_{\theta,\delta}}e^{zt}z^mg(z)\mathcal{H}(z)\widehat{f}(z)dz.
\end{aligned}\end{displaymath}
Now, using the identity $\eta(z)\mathcal{A}^{-1}\big(\eta(z)I+\mathcal{A}\big)^{-1}=\mathcal{A}^{-1}-\big(\eta(z)I+\mathcal{A}\big)^{-1}$,  and the fact that $\int_{\Gamma_{\theta,\delta}}z^{m-1}e^{zt}dz=0$ for $m\geq1$ (since it represents the inverse Laplace transform of $z^{m-1}$, which is a distribution supported at $t=0$), it follows from Lemmas \ref{Lem:function} and \ref{Lem:Lemma1} that
\begin{displaymath}\begin{aligned}
&\big\|\partial_{t}^{m}p(t)\big\|_{L^{2}(\Omega)}\\
&\leq C\Big\|\int_{\Gamma_{\theta,\delta}}e^{zt}z^{m-1}\big(I-(\eta(z)I+\mathcal{A})^{-1}\mathcal{A}\big)p_0dz\Big\|_{L^{2}(\Omega)}
      +C\Big\|\int_{\Gamma_{\theta,\delta}}e^{zt}z^mg(z)\mathcal{H}(z)\widehat{f}(z)dz\Big\|_{L^{2}(\Omega)}\\
&\leq Ct^{\alpha+\gamma-\beta-m}\|p_0\|_{\dot{H}^{2}(\Omega)}
      +C\Big(\sum_{j=0}^{m-1}t^{\alpha+\gamma+j-m}\big\|f^{(j)}(0)\big\|_{L^{2}(\Omega)}
        +t^{\alpha+\gamma-1}\ast\big\|\partial_{t}^{m}f(t)\big\|_{L^{2}(\Omega)}\Big).
\end{aligned}\end{displaymath}
The estimate for $\mathcal{A}\partial_{t}^{m}p(t)$ follows similarly,
\begin{displaymath}\begin{aligned}
&\big\|\mathcal{A}\partial_{t}^{m}p(t)\big\|_{L^{2}(\Omega)}\\
&\leq\Big\|\frac{1}{2\pi i}
      \int_{\Gamma_{\theta,\delta}}e^{zt}z^{m-1}\mathcal{A}
      \big(\eta(z)I+\mathcal{A}\big)^{-1}\mathcal{A}\,p_0\,dz\Big\|_{L^{2}(\Omega)}
      +C\Big\|\int_{\Gamma_{\theta,\delta}}e^{zt}z^mg(z)\mathcal{A}\mathcal{H}(z)\widehat{f}(z)dz\Big\|_{L^{2}(\Omega)}\\
&\leq C t^{-m}\|p_0\|_{\dot{H}^{2}(\Omega)}
      +C\Big(\sum_{j=0}^{m-1}t^{\beta+j-m}\big\|f^{(j)}(0)\big\|_{L^{2}(\Omega)}
      +t^{\beta-1}\ast\big\|\partial_{t}^{m}f(t)\big\|_{L^{2}(\Omega)}\Big).
\end{aligned}\end{displaymath}
These estimations complete the proof of the Thm. \ref{thm:theom1} ($ii$).
\end{proof}

For $m=0$, subject to stronger regularity conditions on $f$, the problem admits a solution with improved regularity.

\begin{theorem}[Sobolev regularity]\label{thm:ffffff}
Let $p_0\equiv0$ and let $p(t)$ be the solution to Problem \eqref{eq:problem}. Assume the parameters in \eqref{eq:parameters} satisfy the conditions stated in Lemma \ref{Lem:function}. Then the following estimates hold with constants $C > 0$ that may vary between inequalities:
\begin{description}
  \item[($i$)] If $f\in L^{1}(0,T; L^{2}(\Omega))$, then for all $t\in(0,T]$ and $l=0,1$,
   \begin{equation}
     \big\|\mathcal{A}^{l}p(t)\big\|_{L^{2}(\Omega)}\leq C\int_{0}^{t}(t-s)^{\alpha+\gamma-1-l(\alpha+\gamma-\beta)}\|f(s)\|_{L^{2}(\Omega)}\,\mathrm{d}s.
   \end{equation}
  \item[($ii$)] If $f\in L^{1}(0,T; \dot{H}^{2}(\Omega))$, then for all $t\in(0,T]$ and $l=0,1$,
     \begin{equation}
       \big\|\mathcal{A}^{l}p(t)\big\|_{L^{2}(\Omega)}\leq C\int_{0}^{t}(t-s)^{\alpha+\gamma-1}\|f(s)\|_{L^{2}(\Omega)}\,\mathrm{d}s.
     \end{equation}
\end{description}
\end{theorem}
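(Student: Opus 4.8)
The plan is to read everything off the mild-solution representation \eqref{eq:integral}. Since $p_0\equiv0$, the solution collapses to the pure convolution $p(t)=\int_0^t\mathcal{F}(s)f(t-s)\,\mathrm{d}s$, so both parts reduce to pointwise-in-$t$ operator bounds on $\mathcal{F}(t)$ and $\mathcal{A}\mathcal{F}(t)$ that are already contained in Lemma~\ref{Lem:Lemma1}~($ii$) with $m=0$, namely $\|\mathcal{F}(t)\|_{\Psi}\le Ct^{\alpha+\gamma-1}$ and $\|\mathcal{A}\mathcal{F}(t)\|_{\Psi}\le Ct^{\beta-1}$. As a preliminary step I would note that these bounds, together with the decay estimates $\|g(z)\mathcal{H}(z)\|_{\Psi}\le C|z|^{-\alpha-\gamma}$ and $\|g(z)\mathcal{A}\mathcal{H}(z)\|_{\Psi}\le C|z|^{-\beta}$ from Lemma~\ref{Lem:function}~($ii$), guarantee that the contour integral in \eqref{eq:solvers} converges absolutely for each $t>0$ and that $\mathcal{F}$ and $\mathcal{A}\mathcal{F}$ have only integrable singularities at $t=0^{+}$ (the exponents $\alpha+\gamma-1$ and $\beta-1$ exceed $-1$ by \eqref{eq:parameters}); this makes the Bochner integral defining $p(t)$ well posed for $f\in L^1(0,T;L^2(\Omega))$ and justifies, via Fubini's theorem and the closedness of $\mathcal{A}$, moving $\mathcal{A}^{l}$ inside the $s$-integral.

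For part~($i$), I would estimate, for $l=0,1$,
\[
\big\|\mathcal{A}^{l}p(t)\big\|_{L^2(\Omega)}\le\int_0^t\big\|\mathcal{A}^{l}\mathcal{F}(t-s)\big\|_{\Psi}\,\|f(s)\|_{L^2(\Omega)}\,\mathrm{d}s,
\]
and insert $\|\mathcal{F}(t-s)\|_{\Psi}\le C(t-s)^{\alpha+\gamma-1}$ when $l=0$ and $\|\mathcal{A}\mathcal{F}(t-s)\|_{\Psi}\le C(t-s)^{\beta-1}$ when $l=1$. The only thing to verify is the exponent arithmetic $\beta-1=(\alpha+\gamma-1)-(\alpha+\gamma-\beta)$, which reproduces exactly the power $(t-s)^{\alpha+\gamma-1-l(\alpha+\gamma-\beta)}$ claimed in the statement; the resulting integral is finite for a.e.\ $t$ since it is a convolution of an $L^1_{\mathrm{loc}}$ kernel with an $L^1$ function.

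For part~($ii$) the point is that, since $\mathcal{A}$ is closed, self-adjoint and positive definite, it commutes with the resolvent $(\eta(z)I+\mathcal{A})^{-1}$, hence with $\mathcal{H}(z)$ and with $\mathcal{F}(t)$. Thus for $f\in L^1(0,T;\dot{H}^{2}(\Omega))$ one may write $\mathcal{A}p(t)=\int_0^t\mathcal{F}(t-s)\,\mathcal{A}f(s)\,\mathrm{d}s$, and now I would apply the \emph{better} bound $\|\mathcal{F}(t-s)\|_{\Psi}\le C(t-s)^{\alpha+\gamma-1}$ (rather than the bound on $\|\mathcal{A}\mathcal{F}\|_{\Psi}$) together with $\|\mathcal{A}f(s)\|_{L^2(\Omega)}=\|f(s)\|_{\dot{H}^{2}(\Omega)}$ to obtain the $(t-s)^{\alpha+\gamma-1}$ kernel uniformly in $l$; the case $l=0$ is literally part~($i$). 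This is precisely where the regularity gain from the exponent $\beta-1$ to $\alpha+\gamma-1$ enters, and it is consistent because $\beta\le\gamma<\alpha+\gamma$.

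I do not anticipate a genuine obstacle: given Lemmas~\ref{Lem:function} and~\ref{Lem:Lemma1}, each part is a one-line convolution estimate followed by exponent bookkeeping. The only place demanding a little attention is the justification, for merely $L^{1}$-in-time data, that $\mathcal{A}^{l}(\mathcal{F}\ast f)=(\mathcal{A}^{l}\mathcal{F})\ast f$ and that the contour integral, the $s$-integral, and — in part~($ii$) — the commutation with $\mathcal{A}$ may all be interchanged; as indicated above this follows from the absolute-convergence/integrable-singularity observations collected in the first step, invoked through Fubini's theorem and the closedness of $\mathcal{A}$.
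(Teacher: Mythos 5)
Your proof is correct and follows essentially the same route as the paper, whose own proof is only a one-line pointer to Lemmas~\ref{Lem:function} and~\ref{Lem:Lemma1}: you spell out exactly what that pointer means, namely convolving $f$ against the kernel bounds $\|\mathcal{F}(t)\|_{\Psi}\leq Ct^{\alpha+\gamma-1}$ and $\|\mathcal{A}\mathcal{F}(t)\|_{\Psi}\leq Ct^{\beta-1}$ for part~($i$), and commuting $\mathcal{A}$ through $\mathcal{F}(t)$ for part~($ii$). The only remark worth making is that your argument for part~($ii$) with $l=1$ naturally yields $\|f(s)\|_{\dot{H}^{2}(\Omega)}$ on the right-hand side (consistent with Corollary~\ref{rmk:regularity}), whereas the theorem as printed writes $\|f(s)\|_{L^{2}(\Omega)}$ --- an apparent typo in the statement rather than a defect in your proof.
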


\begin{proof}
A direct application of the estimates in Lemma \ref{Lem:function}, following the technique of Lemma \ref{Lem:Lemma1}, yields the results.
\end{proof}

According to Theorems \ref{thm:theom1} and \ref{thm:ffffff}, the solution $p(t,r)$ of Problem \eqref{eq:problem} exhibits a singularity at the origin. When $p_0=0$, enhancing the temporal regularity of the source term improves the solution's regularity. Moreover, since $\mathcal{A}$ is a positive and self-adjoint, we have the norm equivalence $\|\mathcal{A}^{\nu}u\|_{L^{2}(\Omega)}\approx\|u\|_{\dot{H}^{2\nu}(\Omega)}$ for $\nu>0$. Using this equivalence and applying Sobolev interpolation to the estimates in Theorems \ref{thm:theom1} and \ref{thm:ffffff}, we obtain the following result.

\begin{corollary}\label{rmk:regularity}
Let $p(t)$ be the solution to Problem \eqref{eq:problem}. Assume the parameters in \eqref{eq:parameters} satisfy the conditions of Lemma \ref{Lem:function}. Then the following estimates hold:
\begin{description}
  \item[($i$)]Let $f\equiv0$ and $p_0\in \dot{H}^{q}(\Omega)$. Then there exists a constant $C>0$ such that
     \begin{equation}
       \big\|\partial_{t}^{m}p(t,r)\big\|_{\dot{H}^{p}(\Omega)}\leq Ct^{-m-(\alpha+\gamma-\beta)(p-q)/2}\big\|p_0\big\|_{\dot{H}^{q}(\Omega)},\quad\text{\rm for } t>0.
     \end{equation}
     where either $m=0$ and $0\leq q\leq p\leq2$, or $m>0$ and $0\leq q,~ p\leq2$.
  \item[($ii$)] Let $p_0\equiv0$ and $f\in L^{1}(0,T;\dot{H}^{q}(\Omega))$. Then there exists a constant $C>0$, such that
      \begin{equation}
      \big\|p(t,r)\big\|_{\dot{H}^{p}(\Omega)}
      \leq C\int_0^t (t-s)^{\alpha+\gamma-1-(\alpha+\gamma-\beta)(p-q)/2}
          \|f(s)\|_{\dot{H}^{q}(\Omega)}ds,\quad\text{\rm for } t>0.
\end{equation}
\end{description}
\end{corollary}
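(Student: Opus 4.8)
The plan is to obtain Corollary \ref{rmk:regularity} as a routine consequence of Theorems \ref{thm:theom1} and \ref{thm:ffffff} by combining the operator norm bounds already proved for $\mathcal{E}(t)$, $\mathcal{F}(t)$ (really for $\mathcal{H}(z)$ and $\mathcal{A}\mathcal{H}(z)$ in Lemma \ref{Lem:function}) with the spectral characterization of the spaces $\dot H^{q}(\Omega)$ and a Hilbert-space interpolation argument. First I would record the two endpoint mapping properties: from Lemma \ref{Lem:Lemma1}(i), $\mathcal{E}(t)$ maps $L^2(\Omega)=\dot H^0$ into $\dot H^0$ with $\|\partial_t^m\mathcal{E}(t)\|_{\Psi}\le Ct^{-m}$, and $\mathcal{A}\partial_t^m\mathcal{E}(t)$ maps $\dot H^0$ into $\dot H^0$ with bound $Ct^{-m-(\alpha+\gamma-\beta)}$; since $\mathcal{A}$ commutes with $\mathcal{H}(z)$ (self-adjointness, as used in the proof of Theorem \ref{thm:theom1}(ii)), the latter says $\partial_t^m\mathcal{E}(t):\dot H^0\to\dot H^2$ with the same bound, and also $\partial_t^m\mathcal{E}(t):\dot H^2\to\dot H^2$ with bound $Ct^{-m}$ via Theorem \ref{thm:theom1}(ii). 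That is the full set of corner estimates on the square $(q,p)\in\{0,2\}^2$ for the homogeneous part.

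The core step is then interpolation. Because $\|\mathcal{A}^{\nu}u\|_{L^2}\simeq\|u\|_{\dot H^{2\nu}}$ and the $\dot H^{s}$ scale is exactly the complex (equivalently, real) interpolation scale generated by powers of the positive self-adjoint operator $\mathcal{A}$, I would invoke the standard interpolation theorem (e.g. \cite{Adams2003}, or Lunardi-type interpolation of fractional powers) to interpolate the estimate $\partial_t^m\mathcal{E}(t):\dot H^{q}\to\dot H^{q}$, $t^{-m}$, against $\partial_t^m\mathcal{E}(t):\dot H^{q}\to\dot H^{q+2}$, $t^{-m-(\alpha+\gamma-\beta)}$, in the target index, giving $\partial_t^m\mathcal{E}(t):\dot H^{q}\to\dot H^{p}$ with bound $Ct^{-m-(\alpha+\gamma-\beta)(p-q)/2}$ for $q\le p\le q+2$; combining with a shift in the source index (interpolating $\dot H^{0}\to\dot H^{p}$ against $\dot H^{2}\to\dot H^{p}$ when $m>0$, where Theorem \ref{thm:theom1}(ii) supplies the smoother-data endpoint) one covers the stated parameter ranges, namely $0\le q\le p\le 2$ when $m=0$ and $0\le q,p\le 2$ when $m>0$ (the $m>0$ case being more permissive precisely because the $\mathcal{A}^{-1}$-cancellation in the proof of Theorem \ref{thm:theom1}(ii) removes the loss incurred when $p<q$). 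For part (ii) I would repeat the argument with $\mathcal{F}(t)$ in place of $\mathcal{E}(t)$: Theorem \ref{thm:ffffff} gives $\|\mathcal{A}^{l}p(t)\|_{L^2}\le C\int_0^t(t-s)^{\alpha+\gamma-1-l(\alpha+\gamma-\beta)}\|f(s)\|_{L^2}\,ds$ for $l=0,1$ when $f\in L^1(0,T;\dot H^0)$, and $\|\mathcal{A}^{l}p(t)\|_{L^2}\le C\int_0^t(t-s)^{\alpha+\gamma-1}\|f(s)\|_{L^2}\,ds$ when $f\in L^1(0,T;\dot H^2)$; interpolating in the data index between $\dot H^0$ and $\dot H^2$ and in the target index between $l=0$ and $l=1$ yields the convolution estimate with exponent $\alpha+\gamma-1-(\alpha+\gamma-\beta)(p-q)/2$, using $\|\cdot\|_{\dot H^{2l}}\simeq\|\mathcal{A}^{l}\cdot\|_{L^2}$ throughout.

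The main obstacle, such as it is, is bookkeeping rather than depth: one must be careful that interpolation commutes with the time-convolution in part (ii) — this is handled by Minkowski's integral inequality, treating $s\mapsto\mathcal{F}(t-s)f(s)$ as a Bochner integral and interpolating the (time-frozen) operator $\mathcal{F}(t-s):\dot H^{q}\to\dot H^{p}$ pointwise in $s$ before integrating — and that the exponent arithmetic in the target/data indices lines up to give exactly $(\alpha+\gamma-\beta)(p-q)/2$. One should also note the edge constraint $p-q\le 2$ implicit in $m=0$ (so that $\partial_t^0\mathcal{E}=\mathcal{E}$ only gains two derivatives), which is why that case restricts to $q\le p$; for $m>0$ the extra time-differentiation supplies the needed decay to allow $p<q$ as well. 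None of these require new estimates beyond Lemmas \ref{Lem:function}, \ref{Lem:Lemma1} and Theorems \ref{thm:theom1}, \ref{thm:ffffff}, so the proof is genuinely a short interpolation argument, consistent with the one-line proof the authors give.
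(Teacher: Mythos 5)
Your proposal is correct and follows essentially the same route as the paper, which obtains the corollary by applying the norm equivalence $\|\mathcal{A}^{\nu}u\|_{L^{2}(\Omega)}\approx\|u\|_{\dot{H}^{2\nu}(\Omega)}$ and Sobolev (operator) interpolation to the endpoint estimates of Theorems \ref{thm:theom1} and \ref{thm:ffffff}. Your elaboration of the corner estimates, the Minkowski step for the convolution in part (ii), and the explanation of why $m=0$ forces $q\leq p$ are all consistent with the paper's (unstated) details.
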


\begin{remark}[Well-posedness]
To establish the well-posedness of Problem \eqref{eq:problem}, it is necessary to impose stronger assumptions on the source term $f$, in particular Lipschitz continuity in time. Based on the mild solution representation  \eqref{eq:integral}, one can construct a solution mapping and verify its bijectivity, continuity, and contraction properties in $L^{p}(\Omega)$ for $p \geq 1$. The existence and uniqueness of a local mild solution then follow from an application of the Banach fixed-point theorem. Extending this to global well-posedness requires the incorporation of an energy functional; the subsequent analysis is straightforward in light of the prior estimates and will not be elaborated further.
\end{remark}

\section{Spatio-temporal spectral accuracy CIM-CLG}
\label{sec:CIM-CLG-s}
Having established the well-posedness and regularity results for the MT-TFJE, we now turn to its numerical approximation and dynamical properties. We introduce a novel scheme achieving spatio-temporal spectral accuracy, the CIM-CLG algorithm, and present a detailed analysis of its convergence and stability. The discussion commences with the temporal semi-discretization.

\subsection{Semidiscrete CIM}
We present the temporal semi-discrete approximation for MT-TFJE via the contour integral method (CIM), accompanied by a comprehensive error study that encompasses the scheme's convergence and the selection of optimal contour parameters.

\subsubsection{Developing the semidiscrete CIM}
The contour integral method (CIM), originally introduced for the numerical approximation of the inverse Laplace transform \cite{Talbot1979}, has found extensive applications in various fields \cite{Colbrook2022a,Colbrook2022b,Li2023,Fernandez2006,Ma2023b,McLean2004,McLean2010}, demonstrating its versatility and effectiveness. The method is based on deforming the Bromwich contour. While the standard contour is a vertical line that leads to challenges such as high-frequency oscillations in the integrand, deforming it into a contour within the left half-plane (as illustrated in Fig.~\ref{fig:FFF} (c) and supported by \cite{Li2021,Fernandez2004,Ma2023a,Weideman2007}) results in exponential decay due to the factor $e^{zt}$, thereby enhancing the convergence rate of the numerical quadrature.

\begin{figure}[htbp]
  \centering
\begin{minipage}[c]{0.32\textwidth}
 \centering
 \centerline{\includegraphics[width=.9\textwidth]{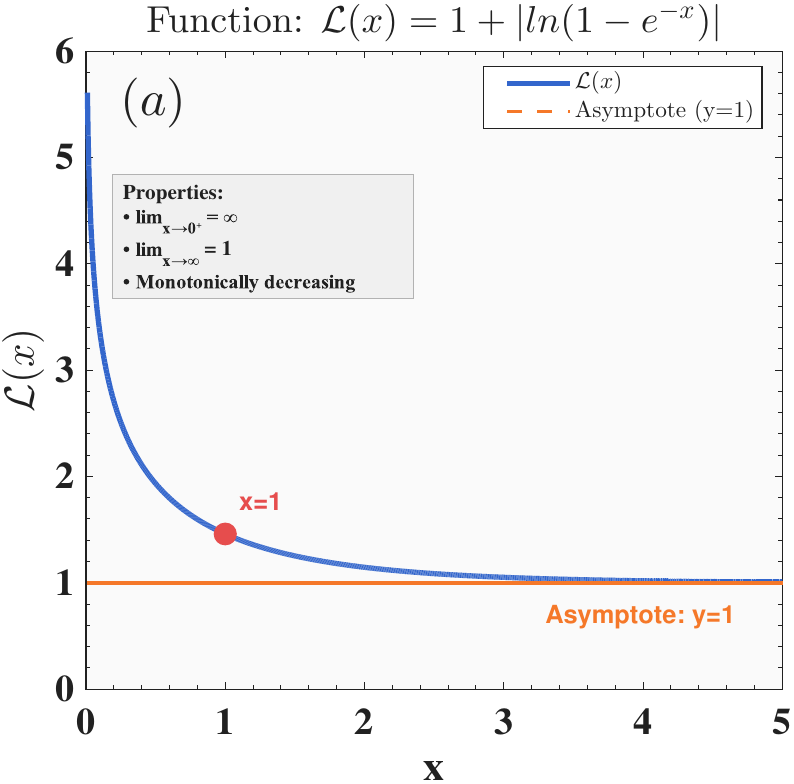}}
\end{minipage}
\begin{minipage}[c]{0.32\textwidth}
 \centering
 \centerline{\includegraphics[width=.9\textwidth]{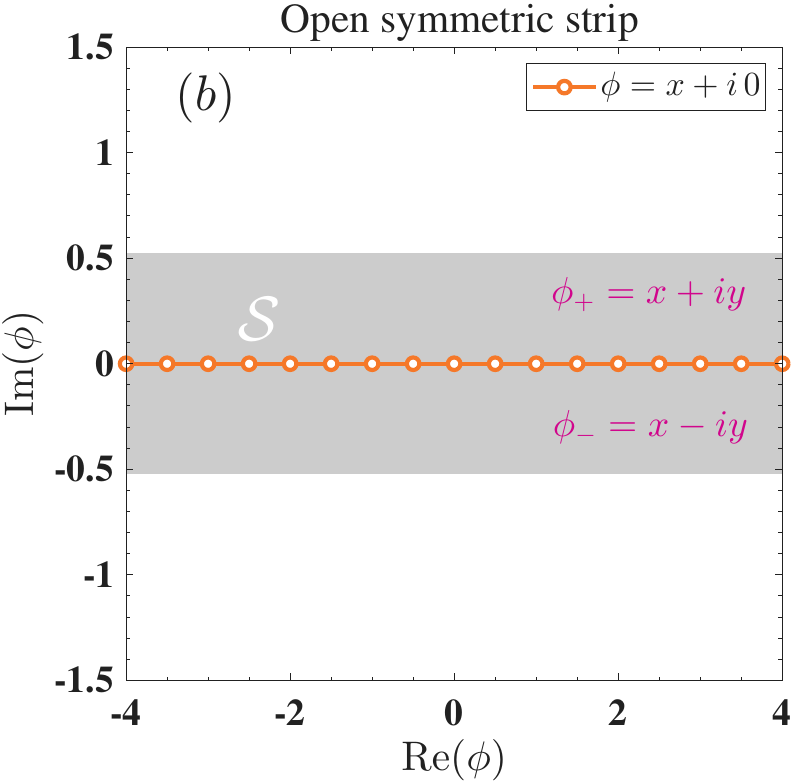}}
\end{minipage}
\begin{minipage}[c]{0.32\textwidth}
 \centering
 \centerline{\includegraphics[width=.9\textwidth]{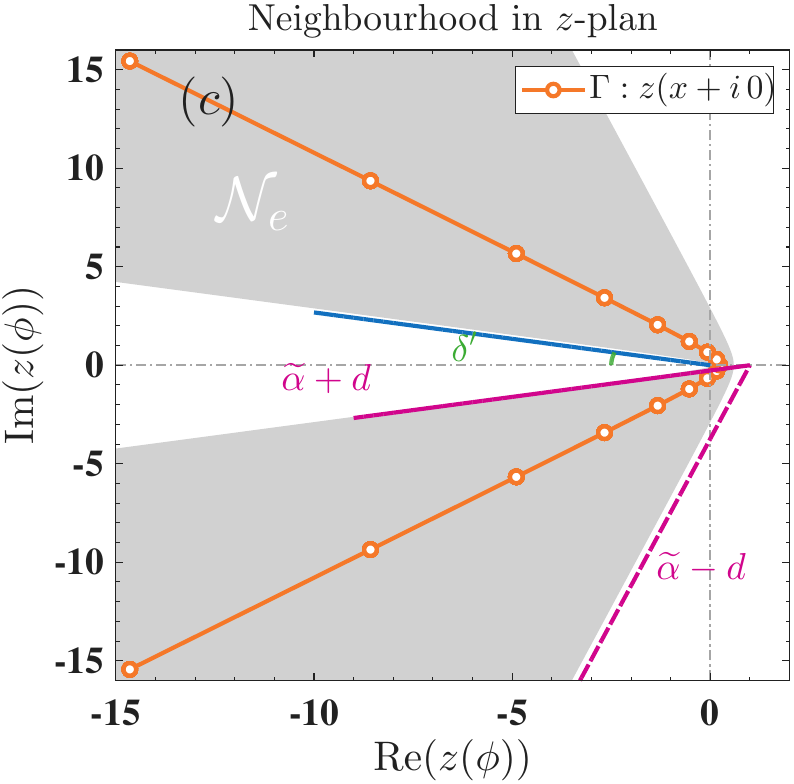}}
\end{minipage}
  \caption{Illustration of auxiliary function $\mathcal{L}(x)$, the open symmetric strip $\mathcal{S}$, and its neighbourhood $\mathcal{N}_{e}$ in the $z$-plan. $(a)$ Monotonic decrease of $\mathcal{L}(x)$. $(b)$ and $(c)$ display the images of $\mathcal{S}$ under the conformal mapping $z(\phi)$ defined by \eqref{eq:hyperboliccontour}. The upper half of $\mathcal{S}$ (with $y>0$) maps to the shaded interior region enclosed by $z(x + i\,0)$ within $\mathcal{N}{e}$; as $y$ increases, $z(\phi)$ approaches the negative real axis. Conversely, the lower half of $\mathcal{S}$ (with $y<0$) maps to the exterior shaded region outside $z(x+i\,0)$ in $\mathcal{N}_{e}$; as $y$ decreases, $z(\phi)$ converges toward a vertical line; $|y|\leq d$, where $d:=\min\{\widetilde{\alpha},\frac{\pi}{2}-\widetilde{\alpha}-\delta'\}$ and  $0<\widetilde{\alpha}-d<\widetilde{\alpha}+d<\frac{\pi}{2}-\delta'$. Parameters for $(b)$ and $(c)$: $\alpha = \pi/4$, $\mu = 0.8$, and $\delta = \pi/12$.}
  \label{fig:FFF}
\end{figure}

In line with the idea of CIM, we choose the left branch of a hyperbola as the integral contour (see, e.g., \cite{Fernandez2004,Fernandez2006,Ma2023a,Ma2023b,Weideman2007}). This contour can be parameterized as
\begin{equation}\label{eq:hyperboliccontour}
\Gamma:\ z(\phi) = \mu\big(1+\sin(i\phi-\widetilde{\alpha})\big),\quad \forall~ \phi\in \mathcal{S},
\end{equation}
where $\mu>0$ and $\widetilde{\alpha}>0$ are parameters to be determined, and $\mathcal{S}$ denotes an open symmetric strip (see Fig. \ref{fig:FFF} (b)), defined by $\mathcal{S}:=\{\phi=x+iy, |y|\leq d\}$, with $d:=\min\{\widetilde{\alpha},\frac{\pi}{2}-\widetilde{\alpha}-\delta'\}$, satisfying  $0<\widetilde{\alpha}-d<\widetilde{\alpha}+d<\frac{\pi}{2}-\delta'$ (see Fig. \ref{fig:FFF} (c)). Here, $\delta'$ represents the inclination angle of the asymptote of the hyperbola \eqref{eq:hyperboliccontour}. For further discussion of this strip, we refer to our previous works \cite{Ma2023a,Ma2023b}. The conformal mapping $z(\phi)$ transforms the open strip $\mathcal{S}$ into a neighbourhood in the complex $z$-plan, denoted by $\mathcal{N}_{e}:=\{z(\phi)\in\mathbb{C}: \phi\in \mathcal{S}\}$ (see Fig. \ref{fig:FFF} (c)). According to Remark \ref{thm:remark1}, for a fixed $\delta'>0$, we have $\mathcal{N}_{e}\subseteq\Sigma_{\theta}$, and the integrand $\widehat{p}(z,r)$ is analytic for $z(\phi)\in \mathcal{N}_{e}$. Therefore, the integral \eqref{eq:integral} can be written as
\begin{equation}\label{eq:integralsolve}
p(t,r)=I:=\int_{-\infty}^{+\infty} u(t,r,\phi)d\phi,~~\text{\rm where}~
u(t,r,\phi):=\frac{1}{2\pi i}e^{z(\phi)t}\widehat{p}(r,z(\phi))z'(\phi),
\end{equation}
and $\widehat{p}(r,z)$ is defined in \eqref{eq:integrand}.

Given that $f\in L^{1}(0,T;L^{2}(\Omega))$, the Schwarz reflection principle implies that  $\widehat{f}(\overline{z})=\overline{\widehat{f}(z)}$. Since the integral contour $\Gamma$ is symmetric with respect to real axis and $\widehat{p}(r,\overline{z(\phi)})=\overline{\widehat{p}(r,z(\phi))}$,
applying the midpoint rule to approximate the integral \eqref{eq:integralsolve} with a uniform step-spacing $\tau>0$ in $\phi$ yields the following temporal semi-discrete scheme for Problem \eqref{eq:problem}, that is
\begin{equation}\label{eq:computeu}
 p^{N}(t,r)=I_{\tau;N}:= \tau\sum\limits_{k = 1-N}^{N-1}u(t,\phi_{k})=\frac{\tau}{\pi}\mathrm{Im}\Bigg\{\sum\limits_{k = 0}^{N-1}e^{z_{k}t}\widehat{p}(z_{k},r)z'_{k}\Bigg\},
\end{equation}
where $z_k = z(\phi_k)$,  $z'_k = z'(\phi_k)$ and $\phi_k=(k+1/2)\tau$ for $k=0,1,...,N-1$. This scheme is straightforward to implement and allows the solution at each time point to be computed independently of previous time steps, facilitating efficient parallel computation.

\subsubsection{Error estimates under lower regularity}
Here, we analyze the error of the CIM \eqref{eq:computeu} under the assumption that $f(t)\in L^{1}(0,T)$. This condition ensures the analytic extension of the Laplace transform $\widehat{f}(z)$ into the sector $\Sigma_\theta$, enabling error estimates to be formulated directly in terms of $\widehat{f}(z)$ rather than $f(t)$ (see \cite{Ma2023a,McLean2004}). For a region $\mathbb{D}\subseteq\Sigma_\theta$, we define the norm $\|\widehat{f}(z)\|_{(\mathbb{D};L^{2}(\Omega))}=\sup\limits_{z\in \mathbb{D}}\|\widehat{f}(z)\|_{L^{2}(\Omega)}$.

Taking $\mathbb{D}=\mathcal{N}_{e}$, we recall the following result from \cite{Fernandez2004}:
\begin{lemma}[Auxiliary function]\label{et:lemma}
For $x>0$, define $\mathcal{L}(x)=1+\big|\ln(1-e^{-x})\big|$. Then for any $s>0$ and $\sigma>0$, the following inequalities hold
\begin{align}
\int_{0}^{+\infty}e^{-s\cosh(x)}dx\leq \mathcal{L}(s),\quad
\int_{\sigma}^{+\infty}e^{-s\cosh(x)}dx\leq \big(1+\mathcal{L}(s)\big)e^{-s\cosh(\sigma)}.
\end{align}
\end{lemma}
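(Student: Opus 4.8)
The plan is to bound each of the two integrals by reducing it to the exponential integral $E_{1}(a):=\int_{a}^{+\infty}t^{-1}e^{-t}\,\mathrm{d}t$, and then to control $E_{1}$ by the logarithmic quantity $-\ln(1-e^{-a})$ appearing in $\mathcal{L}$. The only tools I will need are three elementary inequalities: $\cosh x\ge\frac12 e^{x}$ and $\cosh x\ge 1$ for $x\ge 0$; the addition formula $\cosh(\sigma+y)=\cosh\sigma\cosh y+\sinh\sigma\sinh y$; and $1-e^{-t}\le t$ for $t\ge 0$. The last one is the conceptual bridge, since it gives $E_{1}(a)=\int_{a}^{+\infty}\frac{e^{-t}}{t}\,\mathrm{d}t\le\int_{a}^{+\infty}\frac{e^{-t}}{1-e^{-t}}\,\mathrm{d}t=-\ln(1-e^{-a})=\bigl|\ln(1-e^{-a})\bigr|$, turning a Bessel-type tail into the $\mathcal{L}$-type term. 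I will also use the trivial monotonicity facts that $E_{1}$ is decreasing, so $E_{1}(ca)\le E_{1}(a)$ for $c\ge 1$, while for $0<c\le 1$ one has $E_{1}(ca)=\int_{ca}^{a}\frac{e^{-t}}{t}\,\mathrm{d}t+E_{1}(a)\le\ln(1/c)+E_{1}(a)$.

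For the first inequality I would split $\int_{0}^{+\infty}e^{-s\cosh x}\,\mathrm{d}x=\int_{0}^{1}+\int_{1}^{+\infty}$. On $[0,1]$ the bound $\cosh x\ge 1$ makes the contribution at most $1$. On $[1,+\infty)$ I use $\cosh x\ge\frac12 e^{x}$ and substitute $w=e^{x}$ (so $\mathrm{d}x=\mathrm{d}w/w$), which rewrites the tail as $\int_{e}^{+\infty}w^{-1}e^{-(s/2)w}\,\mathrm{d}w=E_{1}\!\big(\tfrac{e}{2}s\big)$. Since $e/2>1$, this is $\le E_{1}(s)\le\bigl|\ln(1-e^{-s})\bigr|$ by the comparison above. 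Adding the two pieces gives the bound $1+\bigl|\ln(1-e^{-s})\bigr|=\mathcal{L}(s)$.

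For the second inequality I would shift first, $\int_{\sigma}^{+\infty}e^{-s\cosh x}\,\mathrm{d}x=\int_{0}^{+\infty}e^{-s\cosh(\sigma+y)}\,\mathrm{d}y$, and again split at $y=1$. Since $\cosh(\sigma+y)\ge\cosh\sigma$, the piece on $[0,1]$ is at most $e^{-s\cosh\sigma}$; this produces the additive $1$ in the prefactor. On $[1,+\infty)$ the addition formula gives $\cosh(\sigma+y)\ge\cosh\sigma\cosh y=\cosh\sigma+\cosh\sigma(\cosh y-1)\ge\cosh\sigma+(\cosh y-1)$, using $\cosh\sigma\ge 1$, so this piece is at most $e^{-s\cosh\sigma}\int_{1}^{+\infty}e^{-s(\cosh y-1)}\,\mathrm{d}y$. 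It then remains to prove $\int_{1}^{+\infty}e^{-s(\cosh y-1)}\,\mathrm{d}y\le\mathcal{L}(s)$: writing $\cosh y-1=\frac12(e^{y/2}-e^{-y/2})^{2}$ and using $e^{y/2}-e^{-y/2}\ge(1-e^{-1})e^{y/2}$ for $y\ge 1$, the substitution $w=e^{y}$ reduces the integral to $E_{1}(cs)$ with $c=\frac{e(1-e^{-1})^{2}}{2}<1$; then $E_{1}(cs)\le\ln(1/c)+E_{1}(s)$, and since $\ln(1/c)<1$ and $E_{1}(s)\le\bigl|\ln(1-e^{-s})\bigr|$ one gets exactly $\le\mathcal{L}(s)$. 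Collecting the two pieces yields $(1+\mathcal{L}(s))\,e^{-s\cosh\sigma}$.

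The argument is entirely elementary, so I do not expect a genuine obstacle; the real care is bookkeeping of constants. One has to choose the splitting point and the elementary lower bounds for $\cosh$ — in particular the constant $c$ above — so that the leftover numerical factors remain $\le 1$ and are absorbed into the "$1$" of $\mathcal{L}(x)=1+|\ln(1-e^{-x})|$. The step I would isolate as the crux is the comparison $E_{1}(a)\le-\ln(1-e^{-a})$ obtained from $1-e^{-t}\le t$, which is precisely what converts the hyperbolic-cosine tails into the logarithmic quantity defining $\mathcal{L}$.
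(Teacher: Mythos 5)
Your proof is correct. The paper itself gives no proof of this lemma --- it is quoted verbatim from L\'opez-Fern\'andez and Palencia \cite{Fernandez2004} --- so your argument supplies a self-contained derivation rather than reproducing one from the text. I checked the steps: the reduction of both tails to the exponential integral $E_{1}$, the key comparison $E_{1}(a)\le\int_{a}^{\infty}\frac{e^{-t}}{1-e^{-t}}\,\mathrm{d}t=-\ln(1-e^{-a})$ via $1-e^{-t}\le t$, and the bookkeeping of constants all hold. In particular, $c=\frac{e(1-e^{-1})^{2}}{2}=\frac{(e-1)^{2}}{2e}\approx 0.543$, so indeed $c<1$ and $\ln(1/c)\approx 0.61<1$, which lets the leftover factor be absorbed into the additive $1$ of $\mathcal{L}$; likewise $e^{-s}\le 1$ absorbs the $[0,1]$ piece in the first bound, and the shift plus $\cosh(\sigma+y)\ge\cosh\sigma\cosh y\ge\cosh\sigma+(\cosh y-1)$ cleanly factors out $e^{-s\cosh\sigma}$ in the second. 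For what it is worth, the proof in the cited reference is in the same elementary spirit but typically starts from the substitution $u=\sinh x$, which turns $\int e^{-s\cosh x}\,\mathrm{d}x$ into $\int e^{-s\sqrt{1+u^{2}}}(1+u^{2})^{-1/2}\,\mathrm{d}u$ and yields $e^{-s}+E_{1}(s)$ directly without choosing a splitting point or the bound $\cosh x\ge\frac12 e^{x}$; your route costs slightly more constant-chasing but arrives at the same place with the same tools.
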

Note that $\mathcal{L}(x)$ is a decreasing function, with $\mathcal{L}(x)\to 1$ as $x\to \infty$, and $\mathcal{L}(x) \sim |\ln(x)|$ as $x \to 0^{+}$; see Fig. \ref{fig:FFF} ($a$) for an illustration. Based on this result, the integrand $u(t,r,\phi)$ defined in \eqref{eq:integralsolve} satisfies the following property.

\begin{lemma}[Super-exponential decay]\label{thm:prop2}
Let $u(t,r,\phi)$ be defined as in \eqref{eq:integralsolve}, with $z(\phi)\in \mathcal{N}_e\subseteq\Sigma_\theta$. Suppose $t_0\leq t\leq t_1$, where $t_0>0$ and $\Lambda:=t_1/t_0\geq1$. If $p_0\in L^{2}(\Omega)$ and $\|\widehat{f}(z)\|_{(\mathcal{N}_e,L^{2}(\Omega))}<\infty$, then there exists a constant $C>0$ such that for all $\phi=x\pm iy\in \mathcal{S}$,
\begin{equation}\label{ieq:setev}
\|u(t,r,\phi)\|_{L^{2}(\Omega)}
\leq CQ\frac{e^{\mu t_0\Lambda}}{t_0^{1-\alpha-\gamma}}
     \Big(\|p_0\|_{L^{2}(\Omega)}+\|\widehat{f}(z)\|_{(\mathcal{N}_e;L^{2}(\Omega))}\Big)e^{-(1-\nu)\mu t_0\sin(\widetilde{\alpha}-d)\cosh x},
\end{equation}
where, $Q:=\frac{1}{2\pi}\big(\frac{1-\alpha-\gamma}{\nu\sin(\widetilde{\alpha}-d)e^{\nu\mu t_0\sin(\widetilde{\alpha}-d)/(1-\alpha-\gamma)}}\big)^{1-\alpha-\gamma}
\sqrt{\frac{1+\sin(\widetilde{\alpha}+d)}{1-\sin(\widetilde{\alpha}+d)}}$ and $\nu\in(0,1)$.
\end{lemma}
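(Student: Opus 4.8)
The plan is to bound $\|u(t,r,\phi)\|_{L^2(\Omega)}$ pointwise in $\phi$ by treating separately the three factors of $u(t,r,\phi)=\frac{1}{2\pi i}e^{z(\phi)t}\,\widehat{p}(r,z(\phi))\,z'(\phi)$: the scalar weight $|e^{z(\phi)t}|=e^{t\Re z(\phi)}$, the operator norm $\|\widehat{p}(r,z(\phi))\|_{L^2(\Omega)}$, and the Jacobian $|z'(\phi)|$. For the middle factor I would substitute the Laplace-domain representation $\widehat{p}(z)=\mathcal{H}(z)p_0+g(z)\mathcal{H}(z)\widehat{f}(z)$ from \eqref{eq:integrand} and invoke Lemma~\ref{Lem:function}. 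Since $z(\phi)\in\mathcal{N}_e\subseteq\Sigma_\theta$ and, by the explicit formula derived below, $|z(\phi)|$ is bounded below by a positive constant on $\mathcal{S}$, the estimates \eqref{eqs:oppp} together with $\alpha+\gamma<1$ (Prop.~\ref{prop:PDF}) give $\|\mathcal{H}(z(\phi))\|_\Psi\le C|z(\phi)|^{-1}\le C|z(\phi)|^{-\alpha-\gamma}$ and $\|g(z(\phi))\mathcal{H}(z(\phi))\|_\Psi\le C|z(\phi)|^{-\alpha-\gamma}$, hence $\|\widehat{p}(r,z(\phi))\|_{L^2(\Omega)}\le C|z(\phi)|^{-\alpha-\gamma}\big(\|p_0\|_{L^2(\Omega)}+\|\widehat{f}(z)\|_{(\mathcal{N}_e;L^2(\Omega))}\big)$, where the hypothesis $\|\widehat{f}(z)\|_{(\mathcal{N}_e;L^2(\Omega))}<\infty$ is used.

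The core of the argument is the explicit geometry of the contour $z(\phi)=\mu(1+\sin(i\phi-\widetilde{\alpha}))$ restricted to the strip $\mathcal{S}$. Writing $\phi=x+iy$ with $|y|\le d$ and $w:=i\phi-\widetilde{\alpha}=-(y+\widetilde{\alpha})+ix$, the addition formula for $\sin$ gives $\Re z(\phi)=\mu\big(1-\sin(y+\widetilde{\alpha})\cosh x\big)$; the identities $|1+\sin w|^2=(\cosh x-\sin(y+\widetilde{\alpha}))^2$ and $|\cos w|^2=(\cosh x-\sin(y+\widetilde{\alpha}))(\cosh x+\sin(y+\widetilde{\alpha}))$ yield $|z(\phi)|=\mu(\cosh x-\sin(y+\widetilde{\alpha}))$ and, since $z'(\phi)=i\mu\cos w$, the quotient identity $\big|z'(\phi)/z(\phi)\big|=\sqrt{(\cosh x+\sin(y+\widetilde{\alpha}))/(\cosh x-\sin(y+\widetilde{\alpha}))}$. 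Because the definition $d:=\min\{\widetilde{\alpha},\frac{\pi}{2}-\widetilde{\alpha}-\delta'\}$ forces $y+\widetilde{\alpha}\in[\widetilde{\alpha}-d,\widetilde{\alpha}+d]\subset(0,\tfrac{\pi}{2}-\delta')$, one has $0<\sin(\widetilde{\alpha}-d)\le\sin(y+\widetilde{\alpha})\le\sin(\widetilde{\alpha}+d)<1$; combined with $\cosh x\ge1$ this produces the three uniform bounds $\Re z(\phi)\le\mu-\mu\sin(\widetilde{\alpha}-d)\cosh x$, $|z(\phi)|\ge\mu\big(1-\sin(\widetilde{\alpha}+d)\big)>0$, and $\big|z'(\phi)/z(\phi)\big|\le\sqrt{(1+\sin(\widetilde{\alpha}+d))/(1-\sin(\widetilde{\alpha}+d))}$, the last of which is exactly the radical appearing in $Q$.

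Assembling these, and writing $|z'(\phi)|=|z(\phi)|\cdot|z'(\phi)/z(\phi)|$, one obtains
\[\|u(t,r,\phi)\|_{L^2(\Omega)}\le \frac{C}{2\pi}\sqrt{\frac{1+\sin(\widetilde{\alpha}+d)}{1-\sin(\widetilde{\alpha}+d)}}\;e^{t\Re z(\phi)}\,|z(\phi)|^{1-\alpha-\gamma}\big(\|p_0\|_{L^2(\Omega)}+\|\widehat{f}(z)\|_{(\mathcal{N}_e;L^2(\Omega))}\big),\]
so it remains to control the scalar quantity $e^{t\Re z(\phi)}|z(\phi)|^{1-\alpha-\gamma}\le e^{\mu t}\,e^{-\mu t\sin(\widetilde{\alpha}-d)\cosh x}\,(\mu\cosh x)^{1-\alpha-\gamma}$, using $|z(\phi)|\le\mu\cosh x$ and $1-\alpha-\gamma>0$. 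Invoking $t_0\le t\le t_1=\Lambda t_0$, replace $e^{\mu t}$ by $e^{\mu t_0\Lambda}$; in the decaying exponential peel off the fraction $(1-\nu)$ and lower $t$ to $t_0$, producing exactly the claimed super-exponential factor $e^{-(1-\nu)\mu t_0\sin(\widetilde{\alpha}-d)\cosh x}$; and with the remaining fraction $\nu$ apply the elementary maximisation $\rho^{p}e^{-c\rho}\le\big(p/(ce)\big)^{p}$ (for $p>0$, $c>0$, $\rho>0$) with $\rho=\cosh x$, $p=1-\alpha-\gamma$, $c=\nu\mu t_0\sin(\widetilde{\alpha}-d)$. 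Collecting the resulting powers of $\mu$, $t_0$, $\nu$, and $\sin(\widetilde{\alpha}-d)$ yields the $t_0^{-(1-\alpha-\gamma)}$-weighted constant $Q$ of \eqref{ieq:setev}, which completes the estimate.

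I expect the main obstacle to be the contour-geometry bookkeeping — in particular establishing the three uniform bounds of the second paragraph cleanly over the \emph{closed} strip $\mathcal{S}$, which rests entirely on the constraint $0<\widetilde{\alpha}-d<\widetilde{\alpha}+d<\frac{\pi}{2}-\delta'$ built into $d$ — together with the precise splitting of the exponential decay into the part that survives as the stated decay and the part that absorbs the polynomial growth $(\cosh x)^{1-\alpha-\gamma}$; it is the balance of that split (and the elementary max of $\rho^{p}e^{-c\rho}$) that pins down the somewhat intricate form of $Q$, including the treatment of the $p_0$-contribution, which is handled by the same chain of inequalities. The operator estimates themselves follow immediately from Lemma~\ref{Lem:function} and pose no difficulty.
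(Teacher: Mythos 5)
Your proposal is correct and follows essentially the same route as the paper's Appendix~\ref{proofa}: the operator bounds from Lemma~\ref{Lem:function} applied to \eqref{eq:integrand}, the explicit contour identities $|z(\phi)|=\mu(\cosh x-\sin(y+\widetilde{\alpha}))$, $\Re z(\phi)=\mu(1-\sin(y+\widetilde{\alpha})\cosh x)$ and the ratio bound $|z'/z|\leq\sqrt{(1+\sin(\widetilde{\alpha}+d))/(1-\sin(\widetilde{\alpha}+d))}$, followed by the $\nu$-splitting of the decaying exponential and the elementary maximisation of $\rho^{p}e^{-c\rho}$ to absorb $(\cosh x)^{1-\alpha-\gamma}$. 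Your direct maximisation argument is in fact cleaner than the paper's limit-based justification of the same step, but the decomposition and all key estimates coincide.
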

\begin{proof}
For a detailed proof, see Appendix \ref{proofa}.
\end{proof}

Next, we introduce two auxiliary series to characterize errors arising in the CIM \eqref{eq:computeu}:
\begin{itemize}
  \item $I_{\tau}:=\tau\sum_{k=-\infty}^{\infty}u(t,r,\phi_k)$,
  \item $I_{\tau;N;\varepsilon}=\tau\sum_{k=1-N}^{N-1}u(t,r,\phi_{k})(1+\varepsilon_k)$.
\end{itemize}
These allow us to decompose the error of the CIM scheme \eqref{eq:computeu} as follows:
\begin{equation}\label{eq:EN}
\begin{aligned}
E_{N}:&=\|p(t,r)-p^{N}(t,r)\|_{L^{2}(\Omega)}\\
&\leq\|I-I_\tau\big\|_{L^{2}(\Omega)}+\|I_\tau-I_{\tau;N}\|_{L^{2}(\Omega)}
     +\|I_{\tau;N}-I_{\tau;N;\varepsilon}\|_{L^{2}(\Omega)}
=:\mathcal{D}_e+\mathcal{T}_e+\mathcal{R}_e,
\end{aligned}\end{equation}
where
$\mathcal{D}_e: =\|I-I_\tau\|_{L^{2}(\Omega)}$ denotes the discretization error, $\mathcal{T}_e:=\|I_\tau-I_{\tau;N}\|_{L^{2}(\Omega)}$ represents the truncation error, $\mathcal{R}_e:=\|I_{\tau;N}-I_{\tau;N;\varepsilon}\|_{L^{2}(\Omega)}$ is the round-off error
with $|\varepsilon_k|<\varepsilon$ being relative errors for all $k=\pm1, \pm2,... ,\pm (N-1)$. Here, $\varepsilon$ is a specified accuracy used in auxiliary computations, such as solving linear systems. For further discussions, we refer to \cite{Fernandez2006,Ma2023a}.

Based on the decay property established in Lemma \ref{thm:prop2} and the technical proofs in  \cite{Fernandez2004,Fernandez2006,Ma2023a}, it can be demonstrated that for $t_0\leq t\leq \Lambda t_0$ with $t_0>0$ and $\Lambda=t_1/t_0$, the errors $\mathcal{D}_e$ and $\mathcal{T}_e$ satisfy the following uniform estimate:
\begin{equation}\label{eq:DETE}
\begin{aligned}
\mathcal{D}_e+\mathcal{T}_e
\leq
&\frac{CQ}{t_0^{1-\alpha-\gamma}}\mathcal{L}\big((1-\nu)\mu
    t_0\sin(\widetilde{\alpha}-d)\big)\Big(\|p_0\|_{L^{2}(\Omega)}
    +\big\|\widehat{f}(z)\big\|_{(\mathcal{N}_e;L^{2}(\Omega))}\Big) \\
&\times e^{\mu \Lambda t_0}\Big(\frac{1}{e^{2\pi\widetilde{d}/\tau}-1}
    +\frac{1}{e^{(1-\nu)\mu t_0\sin(\widetilde{\alpha}-\widetilde{d})\cosh(N\tau)}}\Big), \quad \forall~~\nu\in(0,1).
\end{aligned}\end{equation}
Additionally, the round-off error $\mathcal{R}_e$ is uniformly bounded by
\begin{equation}\label{eq:RE}
\begin{aligned}
\mathcal{R}_e
&\leq \tau\sum\limits_{k=1-N}^{N-1}\big\|u(t,\phi_{k})\big\|_{L^{2}(\Omega)}|\varepsilon_k|
\leq 2\epsilon\tau\sum\limits_{k=0}^{N-1}\big\|u(t,\phi_{k})\big\|_{L^{2}(\Omega)} \\
&\leq \frac{CQ}{t_0^{1-\alpha-\gamma}}\mathcal{L}((1-\nu)\mu t_0\sin(\widetilde{\alpha}-d))\Big(\|p_0\|_{L^{2}(\Omega)}
    +\big\|\widehat{f}(z)\big\|_{(\mathcal{N}_e;L^{2}(\Omega))}\Big)e^{\mu\Lambda t_0}\varepsilon.
\end{aligned}
\end{equation}

Let $\varrho\in(0,1)$ be a free parameter. Choosing
\begin{equation}\label{para:Fernandz}
\tau=\frac{a(\varrho)}{N},~ ~ \mu=\frac{2\pi \widetilde{d}N(1-\varrho)}{t_0\Lambda a(\varrho)},~ ~
{\rm with}~ ~ a(\varrho):=\cosh^{-1}\left(\frac{\Lambda}{(1-\varrho)\sin(\widetilde{\alpha}-d)}\right),
\end{equation}
we can rewrite the estimate \eqref{eq:DETE} as
\begin{equation}\label{eq:DETEEN}
\mathcal{D}_e+\mathcal{T}_e
\leq CQ\frac{\mathcal{L}\big((1-\nu)\mu t_0\sin(\widetilde{\alpha}-d)\big)}{t_0^{1-\alpha-\gamma}}\Big(\|p_0\|_{L^{2}(\Omega)}
    +\big\|\widehat{f}(z)\big\|_{(\mathcal{N}_e;L^{2}(\Omega))}\Big)e^{\mu\Lambda t_0}\frac{2\varepsilon_N(\varrho)}{1-\varepsilon_N(\varrho)},
\end{equation}
where $\varepsilon_N(\varrho):=\exp\big(-\frac{2\pi dN}{a(\varrho)}\big)$. Combining \eqref{eq:RE} and \eqref{eq:DETEEN}, we obtain the overall error bound for $E_N$ in the CIM scheme (\ref{eq:computeu}), as stated in the following theorem.

\begin{theorem}[Convergence]\label{thm:fernandz}
Let $p(t)$ be the solution to Problem \eqref{eq:problem}, and let $p^{N}(t)$ be its numerical approximation given by \eqref{eq:computeu} with the parameters satisfying the condition states in Lem \ref{Lem:function}. Assume $\Lambda>1$, $0<\widetilde{\alpha},\delta'<\pi/2$, and for any $t_0>0$ with $t_0\leq t\leq\Lambda t_0$, $0<\varrho<1$, if $p_0\in L^{2}(\Omega)$, $\big\|\widehat{f}(z)\big\|_{(\mathcal{N}_e;L^{2}(\Omega))}<\infty$, and parameters $\tau$, $\mu$ are chosen according to \eqref{para:Fernandz}, then the error $E_N=\|p(t)-p^{N}(t)\|_{L^{2}(\Omega)}$ satisfies
\begin{equation*}\begin{aligned}
E_N
\leq CQ\frac{\mathcal{L}((1-\nu)\mu t_0\sin(\widetilde{\alpha}-d))}{t_0^{1-\alpha-\gamma}}\Big(\|p_0\|_{L^{2}(\Omega)}
    +\big\|\widehat{f}(z)\big\|_{(\mathcal{N}_e;L^{2}(\Omega))}\Big)e^{\mu\Lambda t_0}
    \Big(\varepsilon+\frac{\varepsilon_N(\varrho)}{1-\varepsilon_N(\varrho)}\Big),
\end{aligned}\end{equation*}
where $0<d<\min\{\widetilde{\alpha},\pi/2-\widetilde{\alpha}-\delta'\}$, $Q$ is given in Lemma \ref{thm:prop2}.
\end{theorem}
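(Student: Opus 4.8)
The plan is to assemble the stated bound directly from the error splitting \eqref{eq:EN} together with the component estimates already derived above, so the argument will be largely organizational rather than computational. First I would invoke \eqref{eq:EN}, $E_N\le\mathcal{D}_e+\mathcal{T}_e+\mathcal{R}_e$, which separates the total error into the discretization error from replacing the integral \eqref{eq:integralsolve} by the bi-infinite midpoint sum $I_\tau$, the truncation error from keeping only the $2N-1$ central nodes, and the round-off error from evaluating each summand with relative accuracy $\varepsilon$. Each piece will be controlled through the super-exponential decay of $u(t,r,\phi)$ across the analyticity strip $\mathcal{S}$ (Lemma \ref{thm:prop2}), which in turn rests on the decay of $\mathcal{L}$ in Lemma \ref{et:lemma} and on the analyticity of $\widehat p(r,z(\phi))$ in $\mathcal{N}_e\subseteq\Sigma_\theta$ guaranteed by Remark \ref{thm:remark1} and Lemma \ref{Lem:function}.

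For $\mathcal{D}_e+\mathcal{T}_e$ I would take as input the uniform estimate \eqref{eq:DETE}: there the factor $(e^{2\pi\widetilde d/\tau}-1)^{-1}$ comes from deforming the $\phi$-integration to the edges $|y|=\widetilde d$ of the strip and summing a geometric series, while the factor $\exp(-(1-\nu)\mu t_0\sin(\widetilde\alpha-\widetilde d)\cosh(N\tau))$ comes from bounding the tail $|k|\ge N$ with the second inequality of Lemma \ref{et:lemma}. Substituting the parameter choice \eqref{para:Fernandz}, the key computation is that $\tau=a(\varrho)/N$ forces $\cosh(N\tau)=\cosh(a(\varrho))=\Lambda/((1-\varrho)\sin(\widetilde\alpha-d))$ by the definition of $a(\varrho)$, so that the truncation exponent collapses to $(1-\nu)\,2\pi\widetilde d N/a(\varrho)$; simultaneously $(e^{2\pi\widetilde d/\tau}-1)^{-1}=(e^{2\pi\widetilde d N/a(\varrho)}-1)^{-1}=\varepsilon_N(\varrho)/(1-\varepsilon_N(\varrho))$ with $\varepsilon_N(\varrho)=e^{-2\pi dN/a(\varrho)}$. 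Since the truncation term is then at most a constant times $\varepsilon_N(\varrho)^{1-\nu}\le\varepsilon_N(\varrho)/(1-\varepsilon_N(\varrho))$, both terms fold into a single multiple of $\varepsilon_N(\varrho)/(1-\varepsilon_N(\varrho))$, which is precisely \eqref{eq:DETEEN}. For $\mathcal{R}_e$ I would use \eqref{eq:RE}: bound $|\varepsilon_k|\le\varepsilon$, collapse the sum onto $k=0,\dots,N-1$ via the conjugate symmetry of $u(t,r,\phi)$, and dominate $\tau\sum_k\|u(t,\phi_k)\|_{L^2(\Omega)}$ by the same geometric-sum estimate from Lemma \ref{thm:prop2} that controls $\mathcal{D}_e$, which yields $\varepsilon$ times the common prefactor. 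Adding \eqref{eq:DETEEN} and \eqref{eq:RE}, and observing that all three contributions share the prefactor $CQ\,t_0^{-(1-\alpha-\gamma)}\mathcal{L}((1-\nu)\mu t_0\sin(\widetilde\alpha-d))(\|p_0\|_{L^2(\Omega)}+\|\widehat f(z)\|_{(\mathcal{N}_e;L^2(\Omega))})e^{\mu\Lambda t_0}$, I would arrive at the claimed bound with the parenthetical factor $\varepsilon+\varepsilon_N(\varrho)/(1-\varepsilon_N(\varrho))$.

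The only points that need genuine care — and the main, mild obstacle — are the admissibility of the parameter choice \eqref{para:Fernandz}: one must check that $\Lambda/((1-\varrho)\sin(\widetilde\alpha-d))\ge1$ so that $a(\varrho)=\cosh^{-1}(\cdot)$ is real, which holds since $\Lambda\ge1$ while $(1-\varrho)\sin(\widetilde\alpha-d)<1$, and that $0<d<\min\{\widetilde\alpha,\pi/2-\widetilde\alpha-\delta'\}$ keeps $z(\phi)\in\mathcal{N}_e\subseteq\Sigma_\theta$ so the integrand remains analytic along the entire deformed contour \eqref{eq:hyperboliccontour}. Beyond that, I would need to track the $\nu$-dependence and the $d$-versus-$\widetilde d$ bookkeeping carefully through the substitution so that the exponential prefactors and the constant $Q$ match those of Lemma \ref{thm:prop2}; this is routine but must be kept consistent throughout.
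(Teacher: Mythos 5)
Your proposal follows essentially the same route as the paper: the theorem is obtained exactly by combining the error splitting \eqref{eq:EN} with the component bounds \eqref{eq:DETE} and \eqref{eq:RE}, substituting the parameter choice \eqref{para:Fernandz} to collapse the discretization and truncation terms into \eqref{eq:DETEEN}, and adding the round-off contribution, all resting on the decay estimate of Lemma \ref{thm:prop2}. Your added checks on the admissibility of $a(\varrho)$ and the $d$ versus $\widetilde{d}$ bookkeeping are sensible, and your step bounding the truncation term $\varepsilon_N(\varrho)^{1-\nu}$ by a multiple of $\varepsilon_N(\varrho)/(1-\varepsilon_N(\varrho))$ carries the same mild imprecision as the paper's own passage from \eqref{eq:DETE} to \eqref{eq:DETEEN}, so nothing is lost relative to the original argument.
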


Note that there exists $\varrho\in(0,1)$ such that $e^{\mu\Lambda t_0}=\big(\varepsilon_N(\varrho)\big)^{\varrho-1}$. For fixed $\alpha$, $\Lambda$ and $\delta'$, the optimal parameter $\varrho^*\in(0,1)$ can be determined by minimizing the convex function
\begin{equation}
\epsilon\cdot\big(\varepsilon_N(\varrho)\big)^{\varrho-1}+\big(\varepsilon_N(\varrho)\big)^{\varrho}.
\end{equation}
In this study, we target high precision by setting $\epsilon=2.22\times10^{-16}$, corresponding to the machine precision in IEEE double-precision arithmetic. Substituting $\varrho^*$ into \eqref{para:Fernandz} yields the optimali parameters $\mu^*$ and $\tau^*$, from which the convergence order of CIM can be established.

\begin{remark}[Spectral accuracy]\label{rmk:stability}
According to Theorem 3.4 in \cite{Ma2023a}, for fixed $\Lambda$, $\widetilde{\alpha}$, and $\delta'$ satisfying $0<\widetilde{\alpha}, \delta'<\pi/2$ and $0<\widetilde{d}<\min\{\widetilde{\alpha},\pi/2-\widetilde{\alpha}-\delta'\}$, if the parameters in \eqref{para:Fernandz} are chosen as optimal for the hyperbolic contour $\Gamma$, then the error satisfies $E_N=\mathcal{O}(\varepsilon+e^{-CN})$ with $C=\mathcal{O}\big(\frac{1}{\ln(\Lambda)}\big)$.
This implies that the temporal semi-discrete CIM scheme \eqref{eq:computeu} achieves spectral accuracy.
\end{remark}

\subsection{Semi-discrete CLG}
In pursuit of spatio-temporal spectral accuracy, we adopt a spatial semi-discretization scheme that offers high precision while maintaining low computational cost. To this end, the Chebyshev-Legendre Galerkin (CLG) method is employed in this section for the spatial semi-discretization of both one- and two-dimensional formulations of Problem~\eqref{eq:problem}. The computational domain is set as $\Omega=(-1,1)^{n}$ for $n=1,2$.

\subsubsection{Exploring Chebyshev-Legendre Galerkin method}
Let $(u,v):=\int_{\Omega}uv\,\mathrm{d}r$ denote the inner product on $\Omega$, and $\|u\|_{L^{2}(\Omega)}:=(u,u)^{1/2}$ the corresponding norm. The variational formulation of Problem \eqref{eq:problem} is to find $p(t,r)\in H_{0}^{1}(\Omega)$ such that for all $v(t,r)\in (0,T]\times H_{0}^{1}(\Omega)$ and $t \in (0, T]$,
\begin{equation}\label{eq:variation}
\begin{aligned}
&_{0}^{RL}\mathfrak{I}_{t}^{1-\gamma}\Big(1+a~^{RL}_{0}\mathfrak{D}_{t}^{\alpha}
+\sum_{k=1}^{K}a_k~_{0}^{RL}\mathfrak{D}_{t}^{\alpha_k}\Big)\big(\partial_tp(t,r),v(t,r)\big)\\
&=-\Big(1+b~_{0}^{RL}\mathfrak{D}_{t}^{\beta}+\sum_{j=1}^{J}b_j~_{0}^{RL}\mathfrak{D}_{t}^{\beta_j}\Big)
\big(\partial_rp(t,r),\partial_rv(t,r)\big)+\bigl(f(t,r),v(t,r)\bigr).
\end{aligned}\end{equation}

We now construct an approximate solution to the variational form \eqref{eq:variation}. To this end, we define the approximation space as follows.

Let $\mathbb{P}_{M}(\Omega)$ be the space of all algebraic polynomials of degree at most $M$ defined on $\Omega$, and define the approximation space as
\begin{equation}
\mathbb{V}^n_M= \mathbb{P}_M(\Omega)\cap H_{0}^{1}(\Omega),\quad \text{\rm for}~ n = 1,~2,~3.
\end{equation}
Then the semi-discrete scheme for Problem \eqref{eq:problem}, termed the Chebyshev-Legendre Galerkin (CLG) method, is formulated as follows: find $p_M\in \mathbb{V}^n_M$ such that for all $v_M\in \mathbb{V}^{n}_M$,
\begin{equation}\left\{
\begin{aligned}\label{eq:CLG}
&_{0}^{RL}\mathfrak{I}_{t}^{1-\gamma}\Big(1+a~^{RL}_{0}\mathfrak{D}_{t}^{\alpha}
+\sum_{k=1}^{K}a_k~_{0}^{RL}\mathfrak{D}_{t}^{\alpha_k}\Big)\big(\partial_tp_M,v_M\big)\\
&\qquad\qquad=-\Big(1+b~_{0}^{RL}\mathfrak{D}_{t}^{\beta}+\sum_{j=1}^{J}b_j~_{0}^{RL}\mathfrak{D}_{t}^{\beta_j}\Big)
\big(\partial_rp_M,\partial_rv_{M}\big)+\big(I^{c}_Mf,v_{M}\big),\quad t>0,\\
&\big(p^0_{M},v_M\big)=\big(I^{c}_Mp_0,v_M\big),\quad t=0.
\end{aligned}\right.
\end{equation}
Here, $I^c_M:$ $C(\bar{\Omega})\rightarrow\mathbb{P}_M(\Omega)$ denotes the Chebyshev interpolation operator, defined by the interpolation condition $I^c_M v(\xi_j)=v(\xi_j)$ at the Chebyshev-Gauss-Lobatto (CGL) points $\xi_j:=\cos\big(\frac{(2j+1)\pi}{2n+2}\big)$ for $0\leq j\leq M$. The approximate solution is expressed as $p_M=\sum_{k=0}^{M-2}p_{k}(t)\phi_{k}$, with $\phi_{k}$ being the basis of the approximation space $\mathbb{V}^{n}_M$.

The primary distinction between the CLG and the Legendre-Galerkin method lies in the use of the Chebyshev interpolation operator $I^c_M$ in the terms $(I^c_Mf,v_M)$ and $(I^{c}_Mp_0,v_M)$, as opposed to a Legendre interpolation operator. The CLG method combines the advantages of both the Legendre-Galerkin method (LG, which yields symmetric sparse matrices) and Chebyshev-Galerkin method (CG, which discrete Chebyshev transforms can be computed in $\mathcal{O}(M\log_{2}M)$ operations via the Fast Fourier Transform (FFT)). This hybrid strategy preserves the computational efficiency of Chebyshev methods, facilitated by the fast Legendre transform, while maintaining the numerical stability characteristic of Legendre-based discretizations (see, e.g., \cite{Ma98b,Ma98,shen1996cient,shen2011,Zhao12}).

The computational efficiency of the CLG scheme relies heavily on a well-chosen basis for $\mathbb{V}^{n}_M$ to simplify the structure of the resulting linear systems. The selection of basis functions and the algorithm implementation are tailored to the problem dimension, which we classify into one- and two-dimensional cases.

\subsubsection{Efficient 1D numerical implementation}
To facilitate the selection of basis functions adapted to the spatial dimension, we adopt the following convention: in one dimension, $x=r\in\Omega=(-1,1)$; in two dimensions, $(x,y)=r\in\Omega=(-1,1)^2$, and similarly for higher dimensions. Following \cite{Ma98,Shen94,shen2011}, we employ Lagrangian interpolation bases  in the context of collocation and spectral element methods. Accordingly, for the one-dimensional case, we define the approximation space $\mathbb{V}^{1}_M$ and choose a basis $\big\{\phi_{1}(x), \phi_{2}(x),...,\phi_{M}(x)\big\}$, where for $\ell=0,1,2,...,M-2$, $\phi_\ell(x)=c_\ell\big(L_\ell(x)-L_{\ell-2}(x)\big)$. Here, $L_\ell(x)$ denotes the Legendre polynomial of degree $\ell$, and the normalization constant is given by $c_\ell=1/\sqrt{4\ell+6}$. The dimension of $\mathbb{V}^1_M$ is $M-1$, consistent with the number of basis functions after imposing homogeneous Dirichlet boundary conditions.

Define the stiffness and mass matrix entries as $a_{\jmath\ell}:=\big(\phi'_\jmath(x),\phi'_\ell(x)\big)$,  $m_{\jmath\ell}:=\big(\phi_\jmath(x),\phi_\ell(x)\big)$, where the prime denotes the first derivative with respect to $x$. Since $L_\ell(\pm1)=(\pm1)^{\ell}$, the functions $\{\phi_\ell(x)\}_{\ell=0}^{M-2}$ are linearly independent. Note that, using integration by parts yields
\begin{equation}
a_{\jmath\ell}:=\big(\phi'_\jmath(x),\phi'_\ell(x)\big)=-\big(\phi''_\jmath(x),\phi_\ell(x)\big)
=-\big(\phi_\jmath(x),\phi''_\ell(x)\big),
\end{equation}
Using the orthogonality of Legendre polynomials in $L^{2}(\Omega)$,
\begin{equation}
\big(L_\jmath(x),L_\ell(x)\big)=\int_{-1}^{1}L_\jmath(x)L_\ell(x)dx
=\frac{2}{2\jmath+1}\delta_{\jmath\ell},\quad \forall~ \jmath,~ \ell\geq0,
\end{equation}
and the recurrence relation $(2\ell+1)L_\ell(x)=L'_{\ell+1}(x)-L'_{\ell-1}(x)$, we  the following explicit expressions
\begin{equation}
a_{\jmath\ell}=\left\{
\begin{aligned}
&1, ~ ~ \ell=\jmath,\\
&0, ~ ~ \ell\neq \jmath,
\end{aligned}\right., \quad
m_{\jmath\ell}=m_{\ell\jmath}=\left\{
\begin{aligned}
&c_\ell c_\jmath\Big(\frac{2}{2\ell+1}+\frac{2}{2\ell+5}\Big),~ ~ &&\ell=\jmath,\\
&-c_\ell c_\jmath\frac{2}{2\ell+1}, ~ ~ &&\ell=\jmath+2,\\
&0,  ~ ~ &&{\rm Otherwise}.
\end{aligned}\right.
\end{equation}
Now, taking the test functions $v_M=\phi_\ell(x)$ for $\ell=0,1,...,M-2$, the semi-discrete scheme \eqref{eq:CLG} in one dimension becomes
\begin{equation}\label{eq:CGL-basis}
\left\{\begin{aligned}
&{_{0}^{RL}\mathfrak{I}_{t}^{1-\gamma}}\Big(1+a~^{RL}_{0}\mathfrak{D}_{t}^{\alpha}
+\sum_{k=1}^{K}a_k~_{0}^{RL}\mathfrak{D}_{t}^{\alpha_k}\Big)\big(\partial_tp_M(t),\phi_\ell\big)\\
&\qquad\qquad=-\Big(1+b{~_{0}^{RL}\mathfrak{D}_{t}^{\beta}}+\sum_{j=1}^{J}b_j~_{0}^{RL}\mathfrak{D}_{t}^{\beta_j}\Big)
\big(p'_M(t),\phi'_\ell\big)+\big(I^{c}_Mf(t),\phi_\ell\big),\quad t>0,\\
&\big(p^0_{M},\phi_\ell)\big)=\big(I^{c}_Mp_0,\phi_\ell\big),\quad t=0.
\end{aligned}\right.
\end{equation}

Applying the Laplace transform to \eqref{eq:CGL-basis} for all $z\in\Sigma_{\theta,\delta}$, we obtain
\begin{equation}\begin{aligned}\label{eq:CGLL-basis}
&z^{\gamma}\Big(1+az^{\alpha}+\sum_{k=1}^{K}a_kz^{\alpha_k}\Big)\big(\widehat{p}_{M}(z),\phi_\ell\big)
+\Big(1+bz^{\beta}+\sum_{j=1}^{J}b_jz^{\beta_j}\Big)\big(\widehat{p}'_{M}(z),\phi'_\ell\big)\\
&=z^{\gamma-1}\Big(1+az^{\alpha}+\sum_{k=1}^{K}a_kz^{\alpha_k}\Big)\big(I_{M}^{c}p_{0}(z),\phi_\ell\big)
  +\big(I_{M}^{c}\widehat{f}(z),\phi_\ell\big),\quad \ell=0,1,...,M-2.
\end{aligned}\end{equation}
Define $p^{0}_{\ell}=(I^{c}_Mp_0,\phi_\ell)$, and let
$\mathbf{p_0}=(p^{0}_{0},p^{0}_{1},...,p^{0}_{M-2})^{T}$. Similarly, define
$\widehat{f}_\ell=(I^{c}_M\widehat{f}(z),\phi_\ell)$ and
$\mathbf{\widehat{f}}=(\widehat{f}_0,\widehat{f}_1,...,\widehat{f}_{M-2})^{T}$. Expanding the solution as
$\widehat{p}_M=\sum_{\ell=0}^{M-2}\widehat{p}_\ell(z)\phi_\ell$, we define the coefficient vector
$\mathbf{\widehat{p}}=(\widehat{p}_0(z),\widehat{p}_1(z),...,\widehat{p}_{M-2}(z))^{T}$. From the previous analysis, the stiffness matrix $\mathbf{A}=(a_{\jmath\ell})_{0 \leq \jmath,\ell\leq M-2}$ is identical to the identity matrix  $\textbf{I}$, while the mass matrix $\textbf{M}=(m_{\jmath\ell})_{0\leq \jmath,\ell\leq M-2}$ is pentadiagonal.
Thus, the scheme \eqref{eq:CGL-basis} is equivalent to the following matrix equation in the Laplace domain
\begin{equation}\label{eq:CGLLP-basis}
\begin{aligned}
&\Big[\Big(z^{\gamma}+az^{\alpha+\gamma}+\sum_{k=1}^{K}a_kz^{\alpha_k+\gamma}\Big)\mathbf{M}
+\Big(1+bz^{\beta}+\sum_{j=1}^{J}b_jz^{\beta_j}\Big)\textbf{A}\Big]\mathbf{\widehat{p}}\\
&=\Big(z^{\gamma-1}+az^{\alpha+\gamma-1}+\sum_{k=1}^{K}a_kz^{\alpha_k+\gamma-1}\Big)\mathbf{p_0}+\mathbf{\widehat{f}},
\quad\forall~ z\in\Sigma_{\theta,\delta}.
\end{aligned}
\end{equation}
The resulting system matrix is pentadiagonal, which facilitates the use of efficient algorithms for solving the corresponding linear equations.

\subsubsection{Efficient 2D numerical implementation}
For the two- dimensional case, building upon the one-dimensional formulation, the approximation space is naturally constructed as
\begin{equation}
\mathbb{V}^2_M=\Big\{\phi_{\jmath}(x)\phi_{\ell}(y),~\jmath,~\ell=0,1,...,M-2\Big\}.
\end{equation}
Choosing test functions of the form $v_M=\phi_{\jmath}(x)\phi_{\ell}(y)$ for $\jmath$, $\ell=0,1,...,M-2$, the semi-discrete scheme \eqref{eq:CLG} for the two-dimensional problem becomes
\begin{equation}\left\{
\begin{aligned}\label{eq:2DCGL-basis}
&{_{0}^{RL}\mathfrak{I}_{t}^{1-\gamma}}\Big(1+a~^{RL}_{0}\mathfrak{D}_{t}^{\alpha}
+\sum_{k=1}^{K}a_k~_{0}^{RL}\mathfrak{D}_{t}^{\alpha_k}\Big)\big(\partial_tp_M(t),\phi_\jmath\phi_\ell\big)\\
&~+\Big(1+b~_{0}^{RL}\mathfrak{D}_{t}^{\beta}+\sum_{j=1}^{J}b_j~_{0}^{RL}\mathfrak{D}_{t}^{\beta_j}\Big)
\Big[\big(p'_M(t),\phi'_\jmath\phi_l\big)+\big(p'_M,\phi_\jmath\phi'_\ell\big)\Big]
=\big(I^c_Mf(t),\phi_\jmath\phi_\ell\big),\quad t>0,\\
&\big(P^0_M,\phi_\jmath\phi_\ell\big)=\big(I^c_Mp_0,\phi_\jmath\phi_\ell\big),\quad t=0.
\end{aligned}\right.
\end{equation}
Applying the Laplace transform, for $\jmath$, $\ell=0,1,...,M-2$, we obtain
\begin{equation}\label{eq:2DCGLL-basis}
\begin{aligned}
&z^{\gamma}\Big(1+az^{\alpha}+\sum_{k=1}^{K}a_kz^{\alpha_k}\Big)\big(\widehat{p}_{M}(z),\phi_\jmath\phi_\ell\big)
+\Big(1+bz^{\beta}+\sum_{j=1}^{J}b_jz^{\beta_j}\Big)
\big(\widehat{p}'_{M}(z),\phi'_\jmath\phi_\ell+\phi_\jmath\phi'_\ell\big)
\\
&=z^{\gamma-1}\Big(1+az^{\alpha}+\sum_{k=1}^{K}a_kz^{\alpha_k}\Big)\big(I_{M}^{c}p_{0}(z),\phi_\jmath\phi_\ell\big)
  +\big(I_{M}^{c}\widehat{f}(z),\phi_\jmath\phi_\ell\big), \quad\forall~ z\in\Sigma_{\theta,\delta}.
\end{aligned}
\end{equation}
To distinguish from the one-dimensional case, we denote the corresponding quantities using uppercase letters. Define $P^{0}_{\jmath,\ell}=(I^{c}_Mp_0,\phi_\jmath\phi_\ell)$,
$\mathbf{P_0}=(P^{0}_{\jmath,\ell})_{0\leq \jmath,\ell\leq M-2}$,
$\widehat{f}_{\jmath,\ell}=(I^{c}_M\widehat{f},\phi_\jmath\phi_\ell)$,
$\mathbf{\widehat{F}}=(\widehat{f}_{\jmath,\ell})_{0\leq\jmath, \ell\leq M-2}$. Expanding the solution as
$\widehat{P}_M=\sum_{\jmath,\ell=0}^{M-2}\widehat{P}_{\jmath,\ell}(z)\phi_\jmath\phi_\ell$,
$\mathbf{\widehat{P}}=(\widehat{P}_{\jmath,\ell}(z))_{0\leq \jmath,\ell\leq M-2}$,
the matrix form of equation \eqref{eq:2DCGL-basis} in the Laplace domain is given by
\begin{equation}\label{eq:2DCGLLP-basis}
\begin{aligned}
&z^{\gamma}\Big(1+az^{\alpha}+\sum_{k=1}^{K}a_kz^{\alpha_k}\Big)\mathbf{M\widehat{P}M}
    +\Big(1+bz^{\beta}+\sum_{j=1}^{J}b_jz^{\beta_j}\Big)
    \big(\mathbf{M\widehat{P}}\mathbf{A}^{T}+\mathbf{A}\mathbf{\widehat{P}M}\big)\\
&=z^{\gamma-1}\Big(1+az^{\alpha}+\sum_{k=1}^{K}a_kz^{\alpha_k}\Big)\mathbf{P_0}+\mathbf{\widehat{F}},
\quad\forall~ z\in\Sigma_{\theta,\delta}.
\end{aligned}\end{equation}
Since $\mathbf{A}=\mathbf{I}$, the equation can be rewritten using Kronecker products as
\begin{equation}\label{eq:2DCGLLP-Tensor}
\begin{aligned}
&\Big[z^{\gamma}\Big(1+az^{\alpha}+\sum_{k=1}^{K}a_kz^{\alpha_k}\Big)\mathbf{M\otimes M}
+\Big(1+bz^{\beta}+\sum_{j=1}^{J}b_jz^{\beta_j}\Big)\big(\mathbf{M\otimes I}+\mathbf{I\otimes M}\big)\Big]\mathbf{\widehat{P}}\\
&=z^{\gamma-1}\Big(1+az^{\alpha}+\sum_{k=1}^{K}a_kz^{\alpha_k}\Big)\mathbf{P_0}+\mathbf{\widehat{F}},
\quad\forall~ z\in\Sigma_{\theta,\delta}.
\end{aligned}\end{equation}

\subsection{Stability and convergence of the CLG method}
Having introduced the CLG scheme \eqref{eq:CLG}, we now establish its stability and convergence, which are essential for ensuring its reliability and effectiveness. Our analysis begins with the one-dimensional semi-discrete scheme \eqref{eq:CLG}; the results and proofs can be directly extended to higher dimensions.

In the spatial semi-discrete scheme \eqref{eq:CLG}, the initial data and source term are numerically approximated rather than analytically evaluated. It is therefore necessary to investigate the stability of the scheme under such approximations. To facilitate this analysis, we introduce several projection operators and recall their fundamental  approximation properties.

We begin by defining the $L^2$ orthogonal projection operator $P^L_{M}: L^{2}(\Omega)\rightarrow\mathbb{P}_{M}$, requiring that for all $v_h\in \mathbb{P}_M$ and $u\in L^{2}(\Omega)$, $(P^L_Mu,v_h)=(u,v_h)$. This projection satisfies the following approximation estimate.

\begin{lemma}[see \cite{shen2011}]\label{lem:interpolation1}
If $u\in H^{\tilde{r}}(\Omega)$ with $\tilde{r}\in \mathbb{N}$, then there exists a positive constant $C$ such that
\begin{equation}
\big\|P^L_Mu-u\big\|_{H^{l}(\Omega)}\leq CM^{l-\tilde{r}}\|u\|_{H^{\tilde{r}}(\Omega)},\quad \text{\rm for }~ 0\leq l\leq \tilde{r}.
\end{equation}
\end{lemma}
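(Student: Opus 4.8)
The plan is to reduce everything to the decay rate of Legendre expansion coefficients, which is the classical route for spectral projection bounds of this type (see \cite{shen2011}). First I would expand $u\in H^{\tilde r}(\Omega)$ in the Legendre basis, $u=\sum_{k\geq 0}\widehat{u}_k L_k$, so that by the defining property of the $L^2$ orthogonal projection one has $P^L_M u=\sum_{k=0}^{M}\widehat{u}_k L_k$, hence $u-P^L_M u=\sum_{k>M}\widehat{u}_k L_k$. For $l=0$, Parseval's identity in $L^2(\Omega)$ gives $\|u-P^L_M u\|_{L^2(\Omega)}^2=\sum_{k>M}\tfrac{2}{2k+1}|\widehat{u}_k|^2$, and the proof reduces to showing that $u\in H^{\tilde r}(\Omega)$ forces the weighted sum $\sum_{k\geq 0}(1+k)^{2\tilde r}\tfrac{2}{2k+1}|\widehat{u}_k|^2$ to be dominated by $\|u\|_{H^{\tilde r}(\Omega)}^2$; combining the two relations and pulling the factor $(1+M)^{-2\tilde r}$ out of the tail yields the claim for $l=0$.

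The key step, and the main obstacle, is exactly this coefficient-decay estimate, because Legendre polynomials are orthogonal with respect to the \emph{unweighted} inner product while their derivatives are not, so differentiating the series does not diagonalize cleanly and the natural estimates live in Jacobi-weighted Sobolev spaces. I would handle this through the singular Sturm--Liouville operator $\mathcal{L}v:=-\big((1-x^2)v'\big)'$, whose eigenfunctions are precisely the $L_k$ with eigenvalues $k(k+1)$: iterating $\mathcal{L}$ shows the coefficients of $\mathcal{L}^{s}u$ are $(k(k+1))^{s}\widehat{u}_k$, and the resulting graph norm is comparable, after the standard duality and interpolation manipulations, to a norm controlled by $\|u\|_{H^{\tilde r}(\Omega)}$. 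This is the content of the projection estimates compiled in \cite{shen2011}, which I would simply invoke; the self-contained alternative is to prove the Jacobi-weighted version first and then remove the weight using the comparability of weighted and unweighted norms on the finite-dimensional subspace $\mathbb{P}_M$ up to $M$-dependent factors that are absorbed by the rate.

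For $0<l\leq \tilde r$ I would argue by interpolation. The case $l=0$ is above; the stability endpoint $\|P^L_M u-u\|_{H^{\tilde r}(\Omega)}\leq C\|u\|_{H^{\tilde r}(\Omega)}$ follows from the same coefficient identities together with an inverse inequality on $\mathbb{P}_M$ to control the retained modes. Applying the Hilbert-space interpolation inequality $\|w\|_{H^l}\leq C\|w\|_{L^2}^{1-l/\tilde r}\|w\|_{H^{\tilde r}}^{l/\tilde r}$ to $w=P^L_M u-u$ then gives the exponent $M^{-\tilde r(1-l/\tilde r)}=M^{l-\tilde r}$, as required. Finally, for the multidimensional spaces $\mathbb{V}^n_M$ used in the CLG analysis, the estimate tensorizes: writing the projection on $\Omega=(-1,1)^n$ as a composition of directional one-dimensional projections and summing the directional error contributions via the triangle inequality reproduces the rate $M^{l-\tilde r}$, which is exactly what the subsequent convergence argument needs.
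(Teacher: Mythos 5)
The paper itself offers no proof of this lemma --- it is quoted directly from \cite{shen2011} --- so the comparison is really against the standard textbook argument. Your treatment of the case $l=0$ is exactly that argument and is sound: Parseval for the Legendre expansion, plus the coefficient-decay estimate obtained by iterating the singular Sturm--Liouville operator $\mathcal{L}v=-\big((1-x^2)v'\big)'$, gives $\|u-P^L_Mu\|_{L^2(\Omega)}\leq CM^{-\tilde{r}}\|u\|_{H^{\tilde{r}}(\Omega)}$ (with the usual extra care for odd $\tilde{r}$, where one works with the bilinear form or interpolates between even orders).

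The genuine gap is in your interpolation argument for $0<l\leq\tilde{r}$. The endpoint you need, namely $\|P^L_Mu-u\|_{H^{\tilde{r}}(\Omega)}\leq C\|u\|_{H^{\tilde{r}}(\Omega)}$ with $C$ independent of $M$, is false: the $L^2$-orthogonal Legendre projection is \emph{not} uniformly bounded on $H^{s}$ for $s\geq1$ (its $H^1$ operator norm grows like $M^{1/2}$), and no inverse inequality on $\mathbb{P}_M$ will repair this without reintroducing positive powers of $M$. The sharp unweighted result, due to Canuto and Quarteroni, is $\|u-P^L_Mu\|_{H^{l}}\leq CM^{2l-1/2-\tilde{r}}\|u\|_{H^{\tilde{r}}}$ for $l\geq1$, which loses a factor $M^{l-1/2}$ relative to the rate claimed in the lemma; the optimal-rate estimates compiled in \cite{shen2011} control $\partial_x^{l}(P^L_Mu-u)$ only in Jacobi-weighted norms whose weight $(1-x^2)^{l}$ degenerates at $\pm1$, and removing that weight is precisely where the loss occurs --- it cannot be absorbed ``up to $M$-dependent factors'' as you suggest. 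So your scheme proves the lemma only for $l=0$; for $l\geq1$ the statement in this unweighted form is not attainable by $P^L_M$ at all. The standard remedies are to state the result in the weighted spaces, to accept the weaker exponent, or to replace $P^L_M$ by the $H^1_0$-orthogonal projection $\Pi_M$ of Lemma \ref{lem:poluno}, which does satisfy the optimal $H^{l}$ bound for $-1\leq l\leq1$ and is in fact the operator the paper's convergence proofs actually use; the same caveat applies to your tensorization remark in several dimensions.
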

Next, we introduce the $H_{0}^{1}$-orthogonal projection $\Pi_{M}: H^{1}_{0}(\Omega)\rightarrow\mathbb{P}_{M}$,  defined by $\Pi_{M}u(x):=\int_{-1}^{x}P^{L}_{M-1}\frac{\partial}{\partial s}u(s)ds$. By construction, $\Pi_{M}$ satisfies $((\Pi_{M}u(x))',v')=(u',v')$ for all $v\in \mathbb{P}_M$. The operator $\Pi_{M}$ also admits the following estimate.

\begin{lemma}[see \cite{Wu03}]\label{lem:poluno}
If $u\in H^{\tilde{r}}(\Omega)\cap H^{1}_0(\Omega)$ with $\tilde{r}\geq1$, then there exists a positive constant $C$ such that
\begin{equation}
\big\|\Pi_{M}u-u\big\|_{H^{l}(\Omega)}\leq CM^{l-\tilde{r}}\|u\|_{\dot{H}^{\tilde{r}}(\Omega)},\quad\text{\rm for}~ -1\leq l\leq1.
\end{equation}
\end{lemma}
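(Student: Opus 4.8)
The plan is to reduce everything to the single structural identity $(\Pi_M u)'=P^L_{M-1}u'$, which holds by definition of $\Pi_M$. First I would record three elementary consequences: (a) $\Pi_M u\in\mathbb{P}_M$, since its derivative lies in $\mathbb{P}_{M-1}$; (b) $\Pi_M u\in H^1_0(\Omega)$, because $\Pi_M u(-1)=0$ by construction and $\Pi_M u(1)=\int_{-1}^1 P^L_{M-1}u'\,ds=(u',1)=u(1)-u(-1)=0$ using $1\in\mathbb{P}_{M-1}$ and $u\in H^1_0(\Omega)$; and (c) the Galerkin orthogonality $((\Pi_M u-u)',v')=0$ for every $v\in\mathbb{P}_M$, immediate from the defining property of $P^L_{M-1}$. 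In particular $\Pi_M u-u\in H^1_0(\Omega)$, so the Poincar\'e inequality applies to it. Throughout I will also use that, in one dimension, $\dot H^{s}(\Omega)$ embeds continuously in $H^{s}(\Omega)$ with equivalent norms on the former, and that $\|u'\|_{H^{s-1}(\Omega)}\le\|u\|_{H^{s}(\Omega)}$; these follow from the definition of $\dot H^s$ via the eigenfunctions of $\mathcal{A}=-\partial_{xx}$ and the norm equivalence $\|\mathcal{A}^{\nu}\cdot\|_{L^2(\Omega)}\approx\|\cdot\|_{\dot H^{2\nu}(\Omega)}$ recorded in the excerpt.

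For $l=1$ the estimate is essentially immediate: $(\Pi_M u-u)'=P^L_{M-1}u'-u'$, and since $u\in\dot H^{\tilde r}(\Omega)$ implies $u'\in H^{\tilde r-1}(\Omega)$, Lemma~\ref{lem:interpolation1} applied with polynomial degree $M-1$, regularity index $\tilde r-1$, and the index $l=0$ there gives
\[
\|(\Pi_M u-u)'\|_{L^2(\Omega)}\le C(M-1)^{1-\tilde r}\|u'\|_{H^{\tilde r-1}(\Omega)}\le CM^{1-\tilde r}\|u\|_{\dot H^{\tilde r}(\Omega)}
\]
for $M\ge2$; combined with Poincar\'e's inequality on $\Pi_M u-u$ this yields the $l=1$ bound. (For non-integer $\tilde r$ one interpolates this inequality in the regularity parameter.)

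For the cases $l\le0$ I would run an Aubin--Nitsche duality argument that bootstraps the $l=1$ bound. For $l=0$: given $g\in L^2(\Omega)$ set $w:=\mathcal{A}^{-1}g\in\dot H^2(\Omega)$, so $-w''=g$ and $\|w\|_{\dot H^2(\Omega)}=\|g\|_{L^2(\Omega)}$; integrating by parts (boundary terms vanish since $\Pi_M u-u,\,w\in H^1_0(\Omega)$) and inserting $v=\Pi_M w\in\mathbb{P}_M$ into the Galerkin orthogonality gives $(\Pi_M u-u,g)=((\Pi_M u-u)',(w-\Pi_M w)')$, whence by Cauchy--Schwarz, the $l=1$ bound for $u$, and the $l=1$ bound applied to $w$ with $\tilde r=2$, one obtains $|(\Pi_M u-u,g)|\le CM^{-\tilde r}\|u\|_{\dot H^{\tilde r}(\Omega)}\|g\|_{L^2(\Omega)}$; taking the supremum over $\|g\|_{L^2(\Omega)}=1$ proves the $l=0$ case. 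The $l=-1$ case is identical, testing now against $g\in\dot H^{1}(\Omega)$ with $w=\mathcal{A}^{-1}g\in\dot H^{3}(\Omega)$ and using the $l=1$ bound for $w$ with $\tilde r=3$, which produces the rate $M^{-1-\tilde r}$. The remaining values $-1<l<1$ follow from Hilbert-space interpolation between $\dot H^{-1}(\Omega)$, $L^2(\Omega)$ and $\dot H^1(\Omega)$. The only mildly delicate point — the ``hard part'' such as it is — is arranging the duality/bootstrap so that each step invokes only the \emph{already established} $l=1$ inequality (for progressively larger regularity indices) rather than the statement being proved, together with bookkeeping of the $\dot H^s$-versus-$H^s$ norm equivalences; the analytic content is otherwise entirely contained in Lemma~\ref{lem:interpolation1}.
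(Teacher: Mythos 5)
Your proof is correct, but note that the paper itself does not prove this lemma at all: it is quoted verbatim with a citation to \cite{Wu03} (and the companion results in \cite{shen2011}), so there is no in-paper argument to compare against. What you supply is the standard self-contained derivation: the structural identity $(\Pi_M u)'=P^L_{M-1}u'$ reduces the $l=1$ case to the $L^2$-projection estimate of Lemma~\ref{lem:interpolation1} applied to $u'$ (plus Poincar\'e, which is legitimate since you correctly verify $\Pi_M u\in H^1_0(\Omega)$ via $(u',1)=0$), and the cases $l=0,-1$ follow by Aubin--Nitsche duality with regularizing dual solutions $w=\mathcal{A}^{-1}g\in\dot H^{2}$ resp.\ $\dot H^{3}$, each step invoking only the already-proved $l=1$ bound. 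This is essentially the argument in the cited references, so you have recovered the intended proof rather than found a new one. Two minor soft spots, neither fatal: (i) the reduction of non-integer $\tilde r$ to the integer case by ``interpolating in the regularity parameter'' is stated rather than carried out (one should note that $I-\Pi_M$ is a fixed bounded operator so that operator interpolation between the integer-order estimates applies); (ii) your silent replacement of $\|u\|_{H^{\tilde r}}$ by $\|u\|_{\dot H^{\tilde r}}$ inherits an imprecision already present in the lemma as stated, since for $\tilde r>5/2$ the hypothesis $u\in H^{\tilde r}\cap H^1_0$ does not place $u$ in $\dot H^{\tilde r}$; that mismatch is the paper's, not yours.
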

Finally, we consider the Chebyshev interpolation operator $I^c_M(\Omega)$: $C(\bar{\Omega})\rightarrow \mathbb{S}_M$ (see \cite{shen2011}), defined by $I^c_Mu(\xi_j)=u(\xi_j)$, where $\xi_j=\cos(\frac{\pi j}{M})$, $0\leq j\leq M$, are the Chebyshev points. This operator exhibits the following approximation property.

\begin{lemma}[see \cite{Wu03,Li2009,Zhao12,Ma98}]\label{lem:interpolation}
Let $u\in H^{1}(\Omega)$. There exists a constant $C>0$ such that
\begin{equation}
M\|I^{c}_Mu-u\|_{L^{2}(\Omega)}+\|I^{c}_Mu\|_{H^{1}(\Omega)}\leq C\|u\|_{H^{1}(\Omega)}.
\end{equation}
Moreover, if $u\in H^{\tilde{r}}(\Omega)$ with $\tilde{r}\geq1$, then
\begin{equation}
\big\|I^{c}_Mu-u\big\|_{H^{l}(\Omega)}\leq CM^{l-\tilde{r}}\|u\|_{H^{\tilde{r}}(\Omega)},\quad\text{\rm for }~ 0\leq l\leq1.
\end{equation}
\end{lemma}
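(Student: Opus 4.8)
The plan is to reduce the cited estimates to three standard ingredients of Chebyshev spectral theory and then transfer the resulting \emph{weighted} bounds to the \emph{unweighted} Sobolev norms appearing in the lemma. Write $\omega(x)=(1-x^{2})^{-1/2}$ for the Chebyshev weight, $\|v\|_{\omega}:=\big(\int_{\Omega}v^{2}\omega\,\mathrm{d}x\big)^{1/2}$, and let $H^{s}_{\omega}(\Omega)$ be the associated weighted Sobolev space. The three ingredients are: (a) exactness of the $(M+1)$-point Chebyshev--Gauss--Lobatto (CGL) quadrature for polynomials of degree $\le 2M-1$, which yields a norm equivalence $\|q\|_{\omega}\le\|q\|_{M,\omega}\le C\|q\|_{\omega}$ with $C$ independent of $M$, for every $q\in\mathbb{P}_{M}$, where $\|\cdot\|_{M,\omega}$ is the discrete CGL norm; (b) the variational characterization of $I^{c}_{M}$ as the discrete orthogonal projection, $\langle I^{c}_{M}u,v\rangle_{M,\omega}=\langle u,v\rangle_{M,\omega}$ for all $v\in\mathbb{P}_{M}$, so that $I^{c}_{M}$ reproduces $\mathbb{P}_{M}$ and $I^{c}_{M}(u-q)=I^{c}_{M}u-q$ for any $q\in\mathbb{P}_{M}$; and (c) the optimal approximation estimate for the truncated Chebyshev expansion $\pi_{M}u$, namely $\|u-\pi_{M}u\|_{H^{l}_{\omega}(\Omega)}\le CM^{l-\tilde r}\|u\|_{H^{\tilde r}_{\omega}(\Omega)}$ for $0\le l\le\tilde r$.

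First I would derive the weighted interpolation estimate. Using (b) with $q=\pi_{M}u$ gives $u-I^{c}_{M}u=(u-\pi_{M}u)-I^{c}_{M}(u-\pi_{M}u)$; the first term is controlled by (c), and for the second I would use (a) together with the projection property to get $\|I^{c}_{M}(u-\pi_{M}u)\|_{\omega}\le\|u-\pi_{M}u\|_{M,\omega}$, after which the discrete norm of $w:=u-\pi_{M}u$ is bounded by a weighted Sobolev norm via the one-dimensional embedding $H^{1}_{\omega}(\Omega)\hookrightarrow C(\bar\Omega)$ in the $M$-scaled form $\|w\|_{M,\omega}^{2}\le C(\|w\|_{\omega}^{2}+M^{-2}\|w'\|_{\omega}^{2})$, itself a consequence of the quadrature-error bound for $\int_{\Omega}w^{2}\omega$. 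Feeding (c) back in with $l=0,1$ yields $\|u-I^{c}_{M}u\|_{L^{2}_{\omega}(\Omega)}\le CM^{-\tilde r}\|u\|_{H^{\tilde r}_{\omega}(\Omega)}$; differentiating the expansion and invoking the inverse inequality $\|q'\|_{\omega}\le CM^{2}\|q\|_{\omega}$ on $\mathbb{P}_{M}$ (or, more efficiently, the exact three-term relations for Chebyshev derivatives) produces the $H^{1}_{\omega}$ analogue with the expected loss of one power of $M$. In particular the case $\tilde r=1$ gives the weighted stability bound $\|I^{c}_{M}u\|_{H^{1}_{\omega}(\Omega)}\le C\|u\|_{H^{1}_{\omega}(\Omega)}$.

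The last and most delicate step is to replace the weighted norms by the unweighted ones stated in the lemma. Since $\omega\ge1$ one direction is free, $\|v\|_{L^{2}(\Omega)}\le\|v\|_{\omega}$, but $\|u\|_{H^{\tilde r}_{\omega}}$ is \emph{not} bounded by $\|u\|_{H^{\tilde r}(\Omega)}$ because $\omega$ is singular at $x=\pm1$. To circumvent this I would split $\Omega=\Omega_{\mathrm{int}}\cup\Omega_{\partial}$ with $\Omega_{\mathrm{int}}:=[-1+cM^{-2},\,1-cM^{-2}]$ and two boundary strips of width $\mathcal{O}(M^{-2})$: on $\Omega_{\mathrm{int}}$ one has $\omega\le CM$, so the weighted estimates there translate into unweighted ones at the cost of at most one power of $M$, which is then reabsorbed using the already established $L^{2}(\Omega)$ bound and an inverse inequality on the (polynomial) error; on $\Omega_{\partial}$ one exploits that $u-I^{c}_{M}u$ vanishes at the CGL endpoints $x=\pm1$, so a Hardy/Poincar\'e-type inequality on a strip of width $\mathcal{O}(M^{-2})$ absorbs the singular weight and contributes only a lower-order term. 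Collecting the interior and boundary contributions yields $\|u-I^{c}_{M}u\|_{H^{l}(\Omega)}\le CM^{l-\tilde r}\|u\|_{H^{\tilde r}(\Omega)}$ for $l=0,1$ and, with $\tilde r=1$, the stated estimate $M\|I^{c}_{M}u-u\|_{L^{2}(\Omega)}+\|I^{c}_{M}u\|_{H^{1}(\Omega)}\le C\|u\|_{H^{1}(\Omega)}$. I expect the endpoint analysis — keeping the constants uniform in $M$ as the weight degenerates and as $l\to1$ — to be the main obstacle; everything else is a by-now-routine combination of quadrature exactness, the projection identity, and inverse inequalities, for which I would follow the treatments in \cite{Wu03,Ma98,Zhao12}.
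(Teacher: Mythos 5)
The paper does not prove this lemma; it is quoted verbatim from the cited references \cite{Wu03,Li2009,Zhao12,Ma98}, so there is no internal proof to compare against. Judged on its own merits, your weighted-theory part (quadrature exactness, the identity $u-I^{c}_{M}u=(u-\pi_{M}u)-I^{c}_{M}(u-\pi_{M}u)$, the discrete-norm bound $\|w\|_{M,\omega}\le C(\|w\|_{\omega}+M^{-1}\|w'\|_{\omega})$) is standard and sound, but the final step — transferring from $\|\cdot\|_{\omega}$ to the unweighted norms actually appearing in the lemma — has a genuine gap.

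The problem sits on the \emph{right-hand side} of your weighted estimate, not on the left. Your chain produces $\|u-I^{c}_{M}u\|_{L^{2}_{\omega}}\le CM^{-\tilde r}\|u\|_{H^{\tilde r}_{\omega}}$, and while $\|e\|_{L^{2}}\le\|e\|_{L^{2}_{\omega}}$ is indeed free, the inclusion $H^{\tilde r}(\Omega)\subset H^{\tilde r}_{\omega}(\Omega)$ is false for the Chebyshev weight: e.g.\ $u^{(\tilde r)}(x)=(1-x)^{-3/8}$ lies in $L^{2}(\Omega)$ but $\int (u^{(\tilde r)})^{2}\omega\,\mathrm{d}x$ diverges, so $\|u\|_{H^{\tilde r}_{\omega}}$ can be infinite while $\|u\|_{H^{\tilde r}}$ is finite. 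Your interior/boundary splitting only manipulates the norm of the \emph{error} near $x=\pm1$; it cannot repair the right-hand side, because the weighted approximation estimate is global (the Chebyshev coefficients and $I^{c}_{M}$ are defined by integrals and nodal values over all of $\Omega$) and cannot be applied "on $\Omega_{\mathrm{int}}$ only." Even granting such a localization, the bound $\omega\le CM$ on $\Omega_{\mathrm{int}}$ costs a factor $M^{1/2}$ on the datum, and the proposed recovery "using an inverse inequality on the (polynomial) error" fails because $u-I^{c}_{M}u$ is not a polynomial. (A smaller quibble: the inverse inequality $\|q'\|_{\omega}\le CM^{2}\|q\|_{\omega}$ loses two powers of $M$, not one, so the $H^{1}_{\omega}$ estimate also needs the sharper derivative argument you only mention in passing.) The unweighted statements of the lemma are genuinely harder than their weighted counterparts and are obtained in the cited works by a different mechanism — an unweighted stability bound of the form $\|I^{c}_{M}v\|_{L^{2}(\Omega)}\le C(\|v\|_{L^{2}(\Omega)}+M^{-1}\|v'\|_{L^{2}(\Omega)})$ proved directly (via the cosine change of variables and trigonometric interpolation, as in Bernardi–Maday), combined with polynomial reproduction and best unweighted approximation by $\Pi_{M}$ or $P^{L}_{M}$ — rather than by comparing the two weights pointwise.
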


Let $p_M$, $p_0$, and $f$ be computed via \eqref{eq:CLG}, with corresponding numerical errors denoted by $\widetilde{p}$, $\widetilde{p}_0$, and $\widetilde{f}$, respectively. Based on the preceding analysis, for any test function $v_M\in\mathbb{V}_M^1=\mathbb{P}_M(\Omega)\cap H_{0}^{1}(\Omega)$, the following identity holds
\begin{equation}\begin{aligned}
&{_{0}^{RL}\mathfrak{I}_{t}^{1-\gamma}}\Big(1+a~{^{RL}_{0}\mathfrak{D}_{t}^{\alpha}}
+\sum_{k=1}^{K}a_k{~_{0}^{RL}\mathfrak{D}_{t}^{\alpha_k}}\Big)\big(\partial_t(p_M(t)+\widetilde{p}),v_M\big)\\
&=-\Big(1+b{~_{0}^{RL}\mathfrak{D}_{t}^{\beta}}+\sum_{j=1}^{J}b_j{~_{0}^{RL}\mathfrak{D}_{t}^{\beta_j}}\Big)
\big(p'_M(t)+\widetilde{p}\,',v_M'\big)+\big(I_M^cf(t)+\widetilde{f},v_M\big),\quad t>0.
\end{aligned}\end{equation}
From this, one may directly verify that the errors $\widetilde{p}$, $\widetilde{p}_0$, and $\widetilde{f}$ satisfy the following error equation
\begin{equation}\begin{aligned}
&{_{0}^{RL}\mathfrak{I}_{t}^{1-\gamma}}\Big(1+a~{^{RL}_{0}\mathfrak{D}_{t}^{\alpha}}
+\sum_{k=1}^{K}a_k~{_{0}^{RL}\mathfrak{D}_{t}^{\alpha_k}}\Big)\big(\partial_t\widetilde{p},v_M\big)\\
&=-\Big(1+b~{_{0}^{RL}\mathfrak{D}_{t}^{\beta}}+\sum_{j=1}^{J}b_j~{_{0}^{RL}\mathfrak{D}_{t}^{\beta_j}}\Big)
\big(\widetilde{p}\,',v_M'\big)+\big(\widetilde{f},v_M\big),\quad ~t>0.
\end{aligned}\end{equation}
By the arbitrariness of the test function, together with the application of the Laplace transform, we obtain
\begin{equation}\label{eq:Nintegrand}
\widehat{\widetilde{p}}(z)
=\mathcal{H}(z)\widetilde{p}_0+\Big(z^{\gamma-1}+az^{\alpha+\gamma-1}+\sum_{k=1}^{K}a_kz^{\alpha_k+\gamma-1}\Big)^{-1}
           \mathcal{H}(z)\widehat{\widetilde{f}}(z), \quad z\in\Sigma_{\theta,\delta} .
\end{equation}
where the operator $\mathcal{H}(z)$ is define by \eqref{eq:denoteH}. This identity provides the foundation for the subsequent stability analysis.

\begin{theorem}[$L^{2}$-stability]\label{stability}
Suppose that $p_M$, $p_0$, and $f$ are computed via \eqref{eq:CLG}, with corresponding errors $\widetilde{p}$, $\widetilde{p}_0$, and $\widetilde{f}$, respectively. Then there exist a positive constants $C$,  independent of $M$ and $t$, such that for all $t \in (0,T]$ the following estimates hold,
\begin{equation}
\big\|\widetilde{p}(t)\big\|_{L^{2}(\Omega)}
\leq C_1\Big(\big\|\widetilde{p}_0\big\|_{L^{2}(\Omega)}+t^{\alpha+\gamma-1}
\big\|\widehat{\widetilde{f}}(z)\big\|_{(\mathcal{N}_e;L^{2}(\Omega))}\Big).
\end{equation}
\end{theorem}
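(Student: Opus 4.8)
The plan is to reproduce, at the level of the Laplace-domain error identity \eqref{eq:Nintegrand}, the argument already used for the solution representation \eqref{eq:integrand}: identity \eqref{eq:Nintegrand} has exactly the same algebraic form, and the operator $\mathcal{H}(z)$ occurring in it is the one defined in \eqref{eq:denoteH}, so the bounds of Lemma \ref{Lem:function}($ii$) apply verbatim and, being consequences of the sectorial resolvent estimate of Lemma \ref{Lem:Resolvent}, hold with constants independent of $M$. First I would invert the Laplace transform in \eqref{eq:Nintegrand}. Since $\mathcal{H}(z)$ and $g(z):=(z^{\gamma-1}+az^{\alpha+\gamma-1}+\sum_{k=1}^{K}a_kz^{\alpha_k+\gamma-1})^{-1}$ are analytic on $\Sigma_{\theta,\delta}$ (Lemma \ref{Lem:function}, Remark \ref{thm:remark1}) and $e^{zt}$ decays super-exponentially along a left-opening contour, I would deform the Bromwich line onto the hyperbola $\Gamma$ of \eqref{eq:hyperboliccontour}, which lies in $\mathcal{N}_e$ because it is the image of the real axis inside the strip $\mathcal{S}$. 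This yields
\begin{equation*}
\widetilde{p}(t)=\frac{1}{2\pi i}\int_{\Gamma}e^{zt}\mathcal{H}(z)\widetilde{p}_0\,dz
  +\frac{1}{2\pi i}\int_{\Gamma}e^{zt}g(z)\mathcal{H}(z)\widehat{\widetilde{f}}(z)\,dz
  =:\widetilde{p}_{I}(t)+\widetilde{p}_{II}(t).
\end{equation*}

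For $\widetilde{p}_{I}$, the contour can be pulled back to $\Gamma_{\theta,\delta}$, so $\widetilde{p}_{I}(t)=\mathcal{E}(t)\widetilde{p}_0$, and Lemma \ref{Lem:Lemma1}($i$) with $m=0$ gives $\|\widetilde{p}_{I}(t)\|_{L^{2}(\Omega)}\le\|\mathcal{E}(t)\|_{\Psi}\|\widetilde{p}_0\|_{L^{2}(\Omega)}\le C\|\widetilde{p}_0\|_{L^{2}(\Omega)}$, which is the first term of the asserted bound. For $\widetilde{p}_{II}$, I would take the $L^{2}$ norm under the integral, invoke $\|g(z)\mathcal{H}(z)\|_{\Psi}\le C|z|^{-\alpha-\gamma}$ from Lemma \ref{Lem:function}($ii$), and factor out the contour supremum to obtain
\begin{equation*}
\big\|\widetilde{p}_{II}(t)\big\|_{L^{2}(\Omega)}
\le\frac{C}{2\pi}\big\|\widehat{\widetilde{f}}(z)\big\|_{(\mathcal{N}_e;L^{2}(\Omega))}
   \int_{\Gamma}e^{t\Re z}\,|z|^{-\alpha-\gamma}\,|dz|.
\end{equation*}
It then remains to prove $\int_{\Gamma}e^{t\Re z}|z|^{-\alpha-\gamma}|dz|\le C\,t^{\alpha+\gamma-1}$ for $t\in(0,T]$. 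Parametrizing $\Gamma$ by $\phi\in\mathbb{R}$, one has $\Re z(\phi)=\mu(1-\sin\widetilde{\alpha}\cosh\phi)$, $|z'(\phi)|\le\mu\cosh\phi$, with $|z(\phi)|$ bounded below near $\phi=0$ and of order $\mu\cosh\phi$ for $|\phi|$ large; a substitution $s=\cosh\phi$ reduces the large-$|\phi|$ part to $Ce^{\mu t}\int_{1}^{\infty}e^{-c\mu ts}s^{-\alpha-\gamma}ds$, and since the constraint $\alpha+\gamma+\sum_{k=1}^{K}\alpha_k\le1$ of Prop. \ref{prop:PDF} forces $\alpha+\gamma<1$, the change of variables $u=c\mu ts$ produces the factor $t^{\alpha+\gamma-1}\Gamma(1-\alpha-\gamma)$ as $t\to0^{+}$ (the contribution near $\phi=0$, and the range of $t$ bounded away from $0$, being controlled trivially using $t\le T$). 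Adding the two estimates and taking $C_1$ to be the larger constant completes the proof. Equivalently, one may deform this scalar integral to $\Gamma_{\theta,\delta}$ with $\delta=1/t$ and reuse verbatim the computation from the proof of Lemma \ref{Lem:Lemma1}.

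I expect the real work to be twofold. First, the scalar integral estimate above: routine, but requiring care with the $t^{\alpha+\gamma-1}$ weight near $t=0$ and with uniformity in $t$ over all of $(0,T]$. Second, justifying that Lemmas \ref{Lem:function}–\ref{Lem:Lemma1} may be applied to \eqref{eq:Nintegrand} with $M$-independent constants: this rests on the standard fact that the discrete Galerkin solution operator associated with \eqref{eq:CLG} inherits the same sectorial resolvent bound as the continuous operator, uniformly in the polynomial degree $M$, which is precisely why \eqref{eq:Nintegrand} is written with the continuous $\mathcal{H}(z)$. The two-dimensional case introduces no new difficulty, since the tensorized system \eqref{eq:2DCGLLP-Tensor} has the identical Laplace-domain structure and the same splitting and estimates carry over.
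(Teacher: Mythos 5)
Your proposal is correct and follows essentially the same route as the paper: invert the Laplace-domain error identity \eqref{eq:Nintegrand} along a sectorial contour, split into the initial-data and source contributions, apply the bounds $\|\mathcal{H}(z)\|_{\Psi}\le C|z|^{-1}$ and $\|g(z)\mathcal{H}(z)\|_{\Psi}\le C|z|^{-\alpha-\gamma}$ from Lemma \ref{Lem:function}($ii$), and estimate the resulting scalar integral (the paper works directly on $\Gamma_{\theta,\delta}$ with $\delta=1/t$, exactly the alternative you note at the end). Your added remark on the $M$-independence of the constants is a fair observation about a point the paper leaves implicit, but it does not change the argument.
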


\begin{proof}
By combining \eqref{eq:Nintegrand} with the analytical technique of Thm.~\ref{thm:theom1} and the properties of the operator $\mathcal{H}(z)$ from Lemma~\ref{Lem:function}, we obtain the estimate
\begin{displaymath}\begin{aligned}
\big\|\widetilde{p}(t)\big\|_{L^{2}(\Omega)}
&\leq C\int_{\Gamma_{\theta,\delta}}e^{|z|t\cos(\theta)}
         \big\|\mathcal{H}(z)\big\|_{L^{2}(\Omega)\rightarrow L^{2}(\Omega)}\big\|\widetilde{p}_0\big\|_{L^{2}(\Omega)}|\mathrm{d}z|\\
&\quad +C\int_{\Gamma_{\theta,\delta}}e^{|z|t\cos(\theta)}
         \big\|g(z)\mathcal{H}(z)\big\|_{L^{2}(\Omega)\rightarrow L^{2}(\Omega)}\big\|\widehat{\widetilde{f}}(z)\big\|_{L^{2}(\Omega)}|\mathrm{d}z|\\
&\leq C\int_{\Gamma_{\theta,\delta}}e^{|z|t\cos(\theta)}|z|^{-1}|dz|\big\|\widetilde{p}_0\big\|_{L^{2}(\Omega)}
         +C\int_{\Gamma_{\theta,\delta}}e^{|z|t\cos(\theta)}|z|^{-(\alpha+\gamma)} \big\|\widehat{\widetilde{f}}(z)\big\|_{L^{2}(\Omega)}|\mathrm{d}z|\\
&\leq C\big\|\widetilde{p}_0\big\|_{L^{2}(\Omega)}
         +Ct^{\alpha+\gamma-1}\big\|\widehat{\widetilde{f}}(z)\big\|_{(\mathcal{N}_e;L^{2}(\Omega))}.
\end{aligned}\end{displaymath}
This completes the proof.
\end{proof}

Theorem \ref{stability} establishes the unconditional stability of the CLG method in the $L^2$-norm.  We nowproceed to its convergence analysis. Let $p(t)$ be the solution to Problem (\ref{eq:problem}) governed by \eqref{eq:variation}, and let $p_M(t)$ be its numerical approximation defined in \eqref{eq:CGL-basis}. Introduce the comparison function $p^*(t)=\Pi_Mp(t)$ and define the error $\omega(t)=p^*(t)-p_M(t)$. The following convergence results are then established with respect to the CLG scheme.

\begin{theorem}[$L^{2}$-Convergence]\label{eq:homoconverg}
Let $p(t)$ be the solution of Problem \eqref{eq:problem}, and let $p_M(t)$ be its numerical approximation defined in \eqref{eq:CGL-basis}. Then there exists some positive constants $C_i$ ($i=1,2$), independent of $M$ and $t$, such that the following error estimates hold for all $t \in (0,T]$:
\begin{description}
  \item[($i$)] For $f\equiv0$, if $u_0\in H^{\tilde{r}}(\Omega)$ with $\tilde{r}\geq1$, then
      \begin{equation}
       \big\|p(t)-p_{M}(t)\big\|_{L^{2}(\Omega)}\leq C_1 M^{-\tilde{r}}\big\|p_0\big\|_{H^{\tilde{r}}(\Omega)}.
      \end{equation}
  \item[($ii$)] For $p_0\equiv0$, if $\big\|\widehat{f}(z)\big\|_{(\mathcal{N}_e;H^{\tilde{r}}(\Omega))}<\infty$ with $\tilde{r}\geq1$, then
      \begin{equation}
       \big\|p(t)-p_{M}(t)\big\|_{L^{2}(\Omega)}
       \leq C_2 M^{-\tilde{r}}t^{(\alpha+\gamma)-1}
            \big\|\widehat{f}(z)\big\|_{(\mathcal{N}_e;H^{\tilde{r}}(\Omega))}.
      \end{equation}
\end{description}
\end{theorem}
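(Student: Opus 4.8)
The plan is a Galerkin splitting $p(t)-p_M(t)=\big(p(t)-\Pi_Mp(t)\big)+\omega(t)$ with $\omega(t):=\Pi_Mp(t)-p_M(t)\in\mathbb{V}^1_M$, so that the projection error is handled by Lemma~\ref{lem:poluno} together with the spatial regularity of $p$, and $\omega$ is handled by a Laplace-domain stability argument patterned on the proof of Theorem~\ref{stability}. Write $\mathcal{Q}_M$ for the $L^2(\Omega)$-orthogonal projection onto $\mathbb{V}^1_M$ and $\mathcal{A}_M:\mathbb{V}^1_M\rightarrow\mathbb{V}^1_M$ for the operator defined by $(\mathcal{A}_Mu_M,v_M)=(u_M',v_M')$. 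Since $\mathcal{A}_M$ is self-adjoint and positive definite, it satisfies the resolvent estimate of Lemma~\ref{Lem:Resolvent} with a constant independent of $M$; combined with Lemma~\ref{Lem:function}$(i)$ (which concerns only $\eta$), the discrete operators $\mathcal{H}_M(z):=z^{-1}\eta(z)\big(\eta(z)I+\mathcal{A}_M\big)^{-1}$ obey the bounds of Lemma~\ref{Lem:function}$(ii)$ uniformly in $M$; in particular $\|\mathcal{H}_M(z)\|\le C|z|^{-1}$ and $\|g(z)\mathcal{H}_M(z)\|\le C|z|^{-\alpha-\gamma}$ for $z\in\Sigma_{\theta,\delta}$.

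First I would derive the error equation. Restricting the weak form \eqref{eq:variation} to $v=v_M\in\mathbb{V}^1_M$, subtracting the scheme \eqref{eq:CLG}, and using the defining property $\big((\Pi_Mp-p)',v_M'\big)=0$ of $\Pi_M$ together with $\partial_t\Pi_M=\Pi_M\partial_t$, one sees that $\omega$ solves the CLG scheme on $\mathbb{V}^1_M$ with initial value $\omega(0)=\Pi_Mp_0-\mathcal{Q}_MI^c_Mp_0$ and effective source $g_{\mathrm{eff}}=(f-I^c_Mf)-\mathcal{T}_I\big[(I-\Pi_M)\partial_tp\big]$, where $\mathcal{T}_I$ denotes the factor ${}_{0}^{RL}\mathfrak{I}_{t}^{1-\gamma}\big(1+a\,{}^{RL}_{0}\mathfrak{D}_{t}^{\alpha}+\cdots\big)$ multiplying $\partial_tp$ in \eqref{eq:problem}, whose Laplace symbol is $z^{\gamma-1}\big(1+az^{\alpha}+\sum_{k}a_kz^{\alpha_k}\big)$. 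Applying the Laplace transform and using $z^{\gamma}\big(1+az^{\alpha}+\sum_k a_kz^{\alpha_k}\big)=\eta(z)\big(1+bz^{\beta}+\sum_j b_jz^{\beta_j}\big)$, exactly as in the derivation of \eqref{eq:Nintegrand}, yields
\begin{equation*}
\begin{aligned}
\widehat{\omega}(z)&=\mathcal{H}_M(z)\,\omega(0)+g(z)\,\mathcal{H}_M(z)\,\mathcal{Q}_M\widehat{g}_{\mathrm{eff}}(z),\\
\widehat{g}_{\mathrm{eff}}(z)&=(I-I^c_M)\widehat{f}(z)-z^{\gamma-1}\Big(1+az^{\alpha}+\sum_{k}a_kz^{\alpha_k}\Big)\widehat{\theta}(z),\\
\widehat{\theta}(z)&=z\,(I-\Pi_M)\widehat{p}(z)-(I-\Pi_M)p_0.
\end{aligned}
\end{equation*}
Inverting the Laplace transform along the contour (as in the proof of Theorem~\ref{stability}) reduces everything to pointwise-in-$z$ bounds for $\|\widehat{\omega}(z)\|_{L^2(\Omega)}$ and $\|(I-\Pi_M)\widehat{p}(z)\|_{L^2(\Omega)}$.

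For the pointwise bounds I use that $\mathcal{A}$ commutes with $\mathcal{H}(z)$, so the solution representation \eqref{eq:integrand} propagates the $|z|$-decay of Lemma~\ref{Lem:function} to the $\dot{H}^{\tilde{r}}(\Omega)$-norm of $\widehat{p}(z)$: in case $(i)$, $\widehat{p}(z)=\mathcal{H}(z)p_0$ gives $\|\widehat{p}(z)\|_{\dot{H}^{\tilde{r}}(\Omega)}\le C|z|^{-1}\|p_0\|_{\dot{H}^{\tilde{r}}(\Omega)}$, while in case $(ii)$, $\widehat{p}(z)=g(z)\mathcal{H}(z)\widehat{f}(z)$ gives $\|\widehat{p}(z)\|_{\dot{H}^{\tilde{r}}(\Omega)}\le C|z|^{-\alpha-\gamma}\|\widehat{f}(z)\|_{\dot{H}^{\tilde{r}}(\Omega)}$. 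Feeding these into Lemma~\ref{lem:poluno} bounds $\|(I-\Pi_M)\widehat{p}(z)\|_{L^2(\Omega)}$, hence $\|\widehat{\theta}(z)\|_{L^2(\Omega)}$ and $\|\widehat{g}_{\mathrm{eff}}(z)\|_{L^2(\Omega)}$ (Lemma~\ref{lem:interpolation} controls the $(I-I^c_M)\widehat{f}(z)$ term, and Lemmas~\ref{lem:interpolation1},~\ref{lem:poluno} the initial mismatch $\omega(0)$, which vanishes when $p_0\equiv0$); multiplying by $\|\mathcal{H}_M(z)\|\le C|z|^{-1}$ and $\|g(z)\mathcal{H}_M(z)\|\le C|z|^{-\alpha-\gamma}$ one gets, for $z$ on the contour, $\|\widehat{\omega}(z)\|_{L^2(\Omega)}\le CM^{-\tilde{r}}|z|^{-1}\|p_0\|_{H^{\tilde{r}}(\Omega)}$ in case $(i)$ and $\|\widehat{\omega}(z)\|_{L^2(\Omega)}\le CM^{-\tilde{r}}|z|^{-\alpha-\gamma}\|\widehat{f}(z)\|_{H^{\tilde{r}}(\Omega)}$ in case $(ii)$. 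Inverting and using $\int_{\Gamma_{\theta,\delta}}e^{\Re(z)t}|z|^{-1}|\mathrm{d}z|=\mathcal{O}(1)$ (respectively $\int_{\Gamma_{\theta,\delta}}e^{\Re(z)t}|z|^{-\alpha-\gamma}|\mathrm{d}z|=\mathcal{O}\big(t^{\alpha+\gamma-1}\big)$, as in Lemma~\ref{Lem:Lemma1}$(ii)$), then adding the projection error $\|p(t)-\Pi_Mp(t)\|_{L^2(\Omega)}$ (estimated the same way from the bounds on $\|\widehat{p}(z)\|_{\dot{H}^{\tilde{r}}(\Omega)}$), gives the two asserted estimates.

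I expect the main obstacle to be the consistency estimate for $\widehat{g}_{\mathrm{eff}}(z)$: the Laplace symbol $z^{\gamma-1}(1+az^{\alpha}+\cdots)$ of $\mathcal{T}_I$, multiplied by the factor $z$ from the time derivative in $\widehat{\theta}$, grows like $|z|^{\alpha+\gamma}$, and this growth must be cancelled \emph{exactly} by the $|z|^{-\alpha-\gamma}$ decay of $g(z)\mathcal{H}_M(z)$; making this rigorous requires that $\widehat{p}(z)$ carry the right spatial regularity with the sharp algebraic $|z|$-decay, which is extracted from \eqref{eq:integrand} through the commutation of $\mathcal{A}$ with $\mathcal{H}(z)$ and the bounds of Lemma~\ref{Lem:function}, together with the $M$-uniform discrete analogue of Lemma~\ref{Lem:function} for $\mathcal{H}_M(z)$. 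A minor technical point is that the hypotheses are phrased in $H^{\tilde{r}}(\Omega)$ while Lemma~\ref{lem:poluno} delivers $\dot{H}^{\tilde{r}}(\Omega)$; for $\tilde{r}\le2$ the two norms are equivalent on $H^1_0(\Omega)$, and for larger $\tilde{r}$ one tacitly uses the natural boundary compatibility of the data.
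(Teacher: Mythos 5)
Your proposal follows essentially the same route as the paper: the splitting $p-p_M=(p-\Pi_Mp)+\omega$, the treatment of the projection error via Lemma~\ref{lem:poluno} and the regularity theory, and a Laplace-domain resolvent argument for $\omega$ along the contour. The outline is correct, and in two respects it is actually more careful than the paper's own proof. First, you work with the discrete operator $\mathcal{H}_M(z)=z^{-1}\eta(z)(\eta(z)I+\mathcal{A}_M)^{-1}$ and justify its $M$-uniform resolvent bound (which holds since $\lambda_{\min}(\mathcal{A}_M)\geq\lambda_1(\mathcal{A})>0$ by Poincar\'e); this is what the error equation genuinely calls for, because $\widehat{\omega}\in\mathbb{V}_M^1$ and the identity only holds against test functions in $\mathbb{V}_M^1$, whereas the paper writes $\widehat{\omega}=\mathcal{H}(z)(\cdots)$ with the continuous resolvent. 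Second, you retain the mass-matrix consistency term $z^{\gamma}\big(1+az^{\alpha}+\cdots\big)\big(\Pi_M\widehat{p}-\widehat{p},\phi\big)$, which survives the subtraction of the two weak forms (only the stiffness term is killed by the defining property of $\Pi_M$) but is absent from the paper's displayed error identity; your observation that $g(z)\cdot z^{\gamma-1}\big(1+az^{\alpha}+\cdots\big)=1$ exactly cancels the symbol growth, leaving $\mathcal{H}_M(z)\widehat{\theta}(z)$ with norm $CM^{-\tilde{r}}|z|^{-1}\|p_0\|_{\dot{H}^{\tilde{r}}}$ (case $(i)$) or $CM^{-\tilde{r}}|z|^{-\alpha-\gamma}\|\widehat{f}(z)\|_{\dot{H}^{\tilde{r}}}$ (case $(ii)$), is precisely the point that makes the argument close, and the final contour integrations reproduce the stated rates. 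The only loose ends are the ones you already flag: the $H^{\tilde{r}}$ versus $\dot{H}^{\tilde{r}}$ norm mismatch (shared with the paper) and the propagation of $\dot{H}^{\tilde{r}}$-regularity through $\mathcal{H}(z)$ via commutation of $\mathcal{A}^{\tilde{r}/2}$, both of which are routine.
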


\begin{proof}
For case ($i$), with $f\equiv0$, and $t\in(0,T]$, we decompose the error $p(t)-p_{M}(t)$ as
\begin{equation}\label{eq:spaceerror}
p(t)-p_{M}(t)=\big(p(t)-p^*(t)\big)+\big(p^*(t)-p_{M}(t)\big):= \rho(t)+\omega(t),
\end{equation}
which implies $\|p(t)-p_{M}(t)\|_{L^{2}(\Omega)}\leq\|\rho(t)\|_{L^{2}(\Omega)}+\|\omega(t)\|_{L^{2}(\Omega)}$.
We proceed to estimate each term separately. For $\|\rho(t)\|_{L^{2}(\Omega)}$, applying Lemma \ref{lem:poluno} and the regularity results from Thm. \ref{thm:theom1} (or Corollary \ref{rmk:regularity}) yields
\begin{equation}\label{eq:In11a}
\big\|\rho(t)\big\|_{L^{2}(\Omega)}
=\big\|\Pi_Mp(t)-p(t)\big\|_{L^{2}(\Omega)}
\leq C M^{l-\tilde{r}}\big\|p(t)\big\|_{\dot{H}^{\tilde{r}}(\Omega)}
\leq C M^{l-\tilde{r}}\big\|p_0\big\|_{\dot{H}^{\tilde{r}}(\Omega)}.
\end{equation}
To bound $\omega(t)$, we employ \eqref{eq:variation} and \eqref{eq:CGL-basis}. Taking the Laplace transform, we establish that for all $\phi\in H^{1}_0(\Omega)$ and $z\in \Sigma_{\theta,\delta}$
\begin{equation*}
\begin{aligned}
&z^{\gamma}\Big(1+az^{\alpha}+\sum_{k=1}^{K}a_kz^{\alpha_k}\Big)\big(\widehat{\omega}(z),\phi\big)
+\Big(1+bz^{\beta}+\sum_{j=1}^{J}b_jz^{\beta_j}\Big)\big(\widehat{\omega}'(z),\phi'\big)\\
&=z^{\gamma-1}\Big(1+az^{\alpha}+\sum_{k=1}^{K}a_kz^{\alpha_k}\Big)\Big(\Pi_Mp_0-p_0+p_0-I^{C}_Mp_0,\phi\Big).
\end{aligned}
\end{equation*}
This implies
\begin{displaymath}
\widehat{\omega}(z)
=\mathcal{H}(z)\Big(\Pi_Mp_0-p_0+p_0-I^{C}_Mp_0\Big), \quad z\in\Sigma_{\theta,\delta},
\end{displaymath}
where $\mathcal{H}(z)$ is defined in Lemma \ref{Lem:function}. Applying Lemmas \ref{Lem:function}, \ref{lem:poluno} and \ref{lem:interpolation}, we deduce
\begin{equation}\label{eq:spaceerrorW}
\begin{aligned}
\big\|\omega(t)\big\|_{L^{2}(\Omega)}
&\leq C\int_{\Gamma_{\theta,\delta}} e^{t|z|\cos(\theta)} |z|^{-1}|\mathrm{d}z|\Big(\big\|\Pi_Mp_0-p_0\big\|_{L^{2}(\Omega)}+\big\|p_0-I^{C}_Mp_0\big\|_{L^{2}(\Omega)}\Big)\\
&\leq C\,M^{-\tilde{r}}\big\|p_0\big\|_{H^{\tilde{r}}(\Omega)}.
\end{aligned}
\end{equation}
Combining \eqref{eq:In11a} and \eqref{eq:spaceerrorW} completes the proof of Thm. \ref{eq:homoconverg}($i$).

For case ($ii$), following a similar approach to Thm. \ref{eq:homoconverg} ($i$) and leveraging the regularity results from Thm. \ref{thm:ffffff} and Corollary \ref{rmk:regularity}, we first estimate
\begin{equation}\label{eq:spaceerrorWp}
\big\|\rho(t)\big\|_{L^{2}(\Omega)}
=\big\|\Pi_Mp(t)-p(t)\big\|_{L^{2}(\Omega)}
\leq C M^{-\tilde{r}}\big\|p(t)\big\|_{\dot{H}^{\tilde{r}}(\Omega)}
\leq C t^{\alpha+\gamma-1}M^{-\tilde{r}}\big\|\widehat{f}(z)\big\|_{(\mathcal{N}_e;\dot{H}^{\tilde{r}}(\Omega))}.
\end{equation}
For $\omega(t)$, we have
\begin{displaymath}
\widehat{\omega}(z)
=g(z)\mathcal{H}(z)\Big(\Pi_M\widehat{f}(z)-\widehat{f}(z)+\widehat{f}(z)-I^{C}_M\widehat{f}(z)\Big), \quad z\in\Sigma_{\theta,\delta},
\end{displaymath}
where $g(z)$ is defined in Lemma \ref{Lem:function}. Applying Lemmas \ref{lem:poluno} and \ref{lem:interpolation} yields
\begin{equation}\label{eq:spaceerrorWw}
\begin{aligned}
\big\|\omega(t)\big\|_{L^{2}(\Omega)}
&\leq CM^{-\tilde{r}}\int_{\Gamma_{\theta,\delta}}e^{|z|\cos\theta t}\big|z\big|^{-(\alpha+\gamma)}\,\big\|\hat{f}\|_{H^{\tilde{r}}(\Omega)}\big|dz|\\
&\leq CM^{-\tilde{r}}t^{(\alpha+\gamma)-1}\big\|\widehat{f}(z)\big\|_{(\mathcal{N}_e;H^{\tilde{r}}(\Omega))},
\end{aligned}
\end{equation}
Combining the estimates \eqref{eq:spaceerrorWp} and \eqref{eq:spaceerrorWw} completes the proof of Thm. \ref{eq:homoconverg}($ii$).
\end{proof}


For the stability and convergence analysis of spectral methods, the commonly used approach often relies on establishing connections between the solution of the equation and Gronwall's inequality to derive relevant estimates. However, in the case of the MT-TFJE considered here, both the construction of such inequalities and the implementation of related techniques present notable challenges.  In this section, the analytical approach we provide is based on the perspective of the traditional Galerkin finite element method, conducting the analysis in the Laplace space. This methodology not only tightly couples the solution theory with the stability and convergence analysis of the numerical scheme but also introduces novel proof techniques, thereby underscoring the coherence and systematic structure of the present study.

\subsection{The CIM-CLG algorithm}
The fully discrete scheme is constructed by applying the CIM to the semi-discrete scheme CLG given in \eqref{eq:CGLLP-basis} for the one-dimensional case and \eqref{eq:2DCGLLP-Tensor} for the two-dimensional case. Specifically, the numerical solution is computed via
\begin{equation}\label{eq:fullcomputeu}
 p^{N}_{M}(t,r)
 =\frac{\tau}{\pi}\mathrm{Im}\Bigg\{\sum\limits_{k = 0}^{N-1}e^{z_{k}t}\cdot\mathbf{\widehat{p}}\cdot z'_{k}\Bigg\},
\end{equation}
where $\mathbf{\widehat{p}}=[\widehat{p}_{0}(z_k), \widehat{p}_{1}(z_k), ..., \widehat{p}_{M-2}(z_k)]^{T}$ denotes the solution vector approximated at the quadrature points $z_k=(k+\frac{1}{2})\tau$ and $k=0,1,2,...,N-1$. Each vector $\mathbf{\widehat{p}}$ is obtained by solving the corresponding linear system (i.e., \eqref{eq:CGLLP-basis} in one dimension and \eqref{eq:2DCGLLP-Tensor} in two dimensions) in the Laplace domain. Owing to the strict diagonal dominance of the coefficient matrices, each such system admits a unique solution.

On the basis of Theorems \ref{thm:fernandz} and \ref{eq:homoconverg}, we now establish the following error estimate for the fully discrete scheme.

\begin{theorem}[Convergence]
Let $p(t,r)$  be the solutions of Problem \eqref{eq:problem}, and let $p_M^{N}(r,t)$ be its approximation defined as in \eqref{eq:fullcomputeu}. For $0<t_0\leq t\leq \Lambda t_0$ and $M>0$, the following estimates hold:
\begin{description}
  \item[($i$)] Suppose $f\equiv0$. If $p_0\in\dot{H}^{\tilde{r}}(\Omega)$ with $\tilde{r}\geq1$, then there exists a constant $C>0$ such that
   \begin{align*}
    \big\|p(t,r)-p^{N}_{M}(t)\big\|_{L^{2}(\Omega)}
    \leq CQ\,\Xi\,e^{\mu\Lambda t_0}\left((1-\varrho)\cdot\varepsilon
      +\frac{\varrho\cdot\varepsilon_N(\varrho)}{1-\varepsilon_N(\varrho)}+M^{-\tilde{r}}\right)\|p_0\|_{\dot{H}^{r}(\Omega)}.
  \end{align*}
  \item[($ii$)] Suppose $p_0\equiv0$. If $f\in L^{1}(0,T;\in\dot{H}^{\tilde{r}}(\Omega))$ with $\tilde{r}\geq1$, then there exists a constant $C>0$ such that
   \begin{align*}
    \big\|p(t,r)-p^{N}_{M}(t)\big\|_{L^{2}(\Omega)}
    \leq CQ\,\Xi\,e^{\mu\Lambda t_0}\left((1-\varrho)\cdot\varepsilon
      +\frac{\varrho\cdot\varepsilon_N(\varrho)}{1-\varepsilon_N(\varrho)}
      +t^{(\alpha+\gamma)-1}M^{-\tilde{r}}\right)\|f\|_{(\mathcal{N}_e;H^{\tilde{r}}(\Omega))}.
  \end{align*}
\end{description}
Here, $Q$ is defined in Prop. \ref{thm:prop2}, $\Xi:=\frac{L((1-\nu)\mu t_0\sin(\widetilde{\alpha}-d))}{t_0^{1-\alpha-\gamma}}$, and $\varrho$, $\nu\in(0,1)$ are arbitrary.
\end{theorem}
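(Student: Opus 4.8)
The plan is to reduce the fully discrete error to the two convergence results already in hand by inserting the CLG semidiscrete solution as an intermediate term. Write $p_M(t)$ for the CLG semidiscrete solution defined by \eqref{eq:CGL-basis} in 1D (resp. \eqref{eq:2DCGL-basis} in 2D), and $p_M^N(t)$ for its CIM approximation \eqref{eq:fullcomputeu}. By the triangle inequality,
\[
\big\|p(t,r)-p_M^N(t)\big\|_{L^2(\Omega)}\le\big\|p(t,r)-p_M(t)\big\|_{L^2(\Omega)}+\big\|p_M(t)-p_M^N(t)\big\|_{L^2(\Omega)}.
\]
The first term is bounded directly by Theorem \ref{eq:homoconverg}: in case ($i$) it is $\le C M^{-\tilde r}\|p_0\|_{\dot H^{\tilde r}(\Omega)}$, and in case ($ii$) it is $\le C M^{-\tilde r}t^{\alpha+\gamma-1}\|\widehat f(z)\|_{(\mathcal N_e;H^{\tilde r}(\Omega))}$; these are exactly the $M^{-\tilde r}$ (resp. $t^{\alpha+\gamma-1}M^{-\tilde r}$) contributions in the claimed bound, and the prefactor $CQ\,\Xi\,e^{\mu\Lambda t_0}$ only inflates the constant.

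The second term is the temporal CIM error applied to the spatially semidiscrete problem. First I would record, from the Laplace-domain identity underlying \eqref{eq:Nintegrand}, that $\widehat p_M(z)=\mathcal H_M(z)I^c_M p_0+g(z)\mathcal H_M(z)I^c_M\widehat f(z)$, where $\mathcal H_M(z)$ is the finite-dimensional analogue of $\mathcal H(z)$ built from the pencil $[(z^\gamma+az^{\alpha+\gamma}+\sum_k a_k z^{\alpha_k+\gamma})\mathbf M+(1+bz^\beta+\sum_j b_j z^{\beta_j})\mathbf A]$. Since $\mathbf A=\mathbf I$ and $\mathbf M$ is symmetric positive definite (pentadiagonal) with spectrum bounded above and below uniformly in $M$, a discrete analogue of Lemma \ref{Lem:function} holds: $\eta(z)$ stays in $\Sigma_\theta$, the pencil is uniformly invertible by strict diagonal dominance, and $\|\mathcal H_M(z)\|_{L^2\to L^2}\le C|z|^{-1}$, $\|g(z)\mathcal H_M(z)\|_{L^2\to L^2}\le C|z|^{-\alpha-\gamma}$ with $C$ independent of $M$. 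Consequently Lemma \ref{thm:prop2} and the estimate \eqref{ieq:setev} transfer verbatim, with $p_0,\widehat f$ replaced by $I^c_M p_0, I^c_M\widehat f$, so the integrand decays super-exponentially uniformly in $M$. Re-running the decomposition \eqref{eq:EN}--\eqref{eq:DETEEN} and Theorem \ref{thm:fernandz} then gives
\[
\big\|p_M(t)-p_M^N(t)\big\|_{L^2(\Omega)}\le CQ\,\Xi\,e^{\mu\Lambda t_0}\Big(\varepsilon+\frac{\varepsilon_N(\varrho)}{1-\varepsilon_N(\varrho)}\Big)\Big(\big\|I^c_M p_0\big\|_{L^2(\Omega)}+\big\|I^c_M\widehat f(z)\big\|_{(\mathcal N_e;L^2(\Omega))}\Big),
\]
and the interpolation stability in Lemma \ref{lem:interpolation} bounds $\|I^c_M p_0\|_{L^2(\Omega)}\le C\|p_0\|_{\dot H^{\tilde r}(\Omega)}$ (resp. $\|I^c_M\widehat f\|_{(\mathcal N_e;L^2(\Omega))}\le C\|\widehat f\|_{(\mathcal N_e;H^{\tilde r}(\Omega))}$) uniformly in $M$. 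Using the optimized relation $e^{\mu\Lambda t_0}=(\varepsilon_N(\varrho))^{\varrho-1}$ and splitting $\varepsilon=(1-\varrho)\varepsilon+\varrho\varepsilon$ reproduces the factor $(1-\varrho)\varepsilon+\varrho\,\varepsilon_N(\varrho)/(1-\varepsilon_N(\varrho))$ in the statement. Adding the two contributions and absorbing all $M$- and $t$-independent constants into $C$ yields both ($i$) and ($ii$).

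The main obstacle is the uniformity in $M$ of the discrete resolvent bounds, i.e.\ showing that Lemma \ref{Lem:function}($ii$) survives with $\eta(z)I+\mathcal A$ replaced by the CLG pencil and constants that do not degrade as $M\to\infty$. This is precisely where the structural facts $\mathbf A=\mathbf I$, the symmetry and positive definiteness of $\mathbf M$, and the strict diagonal dominance of the combined matrix are needed: they let one mimic the sectoriality argument of Lemma \ref{Lem:function}($i$) at the matrix level, since the eigenvalues of $\mathbf M^{-1}$ are bounded uniformly, keeping the pencil invertible with the correct $|z|$-decay. A secondary point, worth a one-line remark rather than a new estimate, is that Theorem \ref{thm:fernandz} was proved using only analyticity of the integrand in $\mathcal N_e$ and the decay bound \eqref{ieq:setev}; both carry over to the semidiscrete integrand once the uniform discrete bounds are established, so the temporal half of the argument requires no recomputation, only a change of notation.
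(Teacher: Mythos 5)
Your proposal is correct and follows exactly the route the paper intends: the paper gives no written proof, stating only that the result follows ``on the basis of'' Theorem \ref{thm:fernandz} and Theorem \ref{eq:homoconverg}, which is precisely your triangle-inequality splitting into the CLG spatial error and the CIM temporal error applied to the semidiscrete problem. Your explicit treatment of the only nontrivial point --- that the resolvent bounds of Lemma \ref{Lem:function}($ii$) hold for the Galerkin-restricted operator uniformly in $M$ (which follows from self-adjointness and positive definiteness, with the smallest discrete eigenvalue bounded below by $\lambda_1$ via the min--max principle, rather than from diagonal dominance per se) --- supplies a step the paper silently omits, and the remaining bookkeeping with $I^c_M$-stability and the $(1-\varrho,\varrho)$ weighting matches the stated bound up to constants.
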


Guided by the design principles of the fully discrete scheme, the specific algorithm is implemented as shown in Algorithm \ref{Algo:CIM-FLG}. This algorithm is referred to as the CIM-CLG.

\begin{algorithm}[thbp]\small{
        \caption{CIM-CLG algorithm}
        \label{Algo:CIM-FLG}
        \begin{algorithmic}[1]
            \STATE{\textbf{Input:}
                           $\alpha$, $\alpha_k$, $\beta$, $\beta_j$, $\gamma$, $t_0$, $\Lambda\geq1$, $T=\Lambda t_0$, $t$,  $K$, $J$,
                           $\widetilde{\alpha}$, $\delta'$, $\varepsilon=2.22\times10^{-16}$, $N$, $M$, $D$.}
            \STATE{\textbf{Output:} The numerical result $\mathbf{p}_M^N(r,t)$.}
            \WHILE{$j<D$}
            \STATE{$\varrho_j$ $\leftarrow$ $1/D\cdot j$;}
            \STATE{$a(\varrho_j)$ $\leftarrow$ $\cosh^{-1}\Big(\frac{\Lambda}{(1-\varrho_j)\sin(\widetilde{\alpha}-\widetilde{d})}\Big)$;}
            \STATE{$\varepsilon_N(\varrho_j)$ $\leftarrow$ $\exp\big(\frac{2\pi \widetilde{d} N}{a(\varrho_j)}\big)$;}
            \STATE{$\varrho^*$ $\leftarrow$
            $\min\limits_j\Big((1-\varrho_j)\cdot\varepsilon+\frac{\varrho_j\cdot\varepsilon_N(\varrho_j)}
              {1-\varepsilon_N(\varrho_j)}\Big)$;}
            \ENDWHILE
            \STATE{Substitute $\varrho^*$ into \eqref{para:Fernandz} to get the optimal parameters $\tau^*$ and $\mu^*$.}
            \RETURN{$\tau^*$ and $\mu^*$.}
            \WHILE{$k<N$}
            \STATE{$z_k$ $\leftarrow$ $\mu^*\big(1+\sin(i(k+1/2)\tau^*-\widetilde{\alpha})\big)$,\quad$i^2=-1$;}
            \STATE{$z'_k$ $\leftarrow$ $i\mu^*\cos\big(i(k+1/2)\tau^*-\widetilde{\alpha})\big)$, \quad$i^2=-1$;}
            \STATE{$p_{0,j}$ $\leftarrow$ Evaluate the Legendre coefficients of $I_M^{c}p_0$ from
                               $\{p(0,r_j)\}_{j=0}^{M-2}$;}
            \STATE{$\widehat{f}_j$ $\leftarrow$ Evaluate the Legendre coefficients of $I_M^{c}\widehat{f}_j$ from $\big\{\widehat{f}(z,r_j)\big\}_{j=0}^{M-2}$;}
            \STATE{$\mathbf{\widehat{p}}(z_k)$ $\leftarrow$ Solve $\big[\big(z^{\gamma}+az^{\alpha+\gamma}+\sum_{k=1}^{K}a_kz^{\alpha_k+\gamma}\big)\mathbf{M}
               +\big(1+bz^{\beta}+\sum_{j=1}^{J}b_jz^{\beta_j}\big)\textbf{A}\big]\mathbf{\widehat{p}}
               =\big(z^{\gamma-1}+az^{\alpha+\gamma-1}+\sum_{k=1}^{K}a_kz^{\alpha_k+\gamma-1}\big)
               \mathbf{p_0}+\mathbf{\widehat{f}}$;}
             \STATE{$\widehat{p}_M$ $\leftarrow$ Evaluate $\widehat{p}_M=\sum_{j=0}^{M-2}\widetilde{p}_i\phi_{i}(x_j)$, $j=0,1,...,M$.}
            \ENDWHILE
            \RETURN{$\mathbf{\widehat{p}}(z_k)$.}
            \STATE{$\mathbf{p}_M^N(t,r)$ $\leftarrow$ $\frac{\tau^*}{\pi}\mathrm{Im}\big\{\sum_{k =
                            0}^{N-1}e^{z_{k}t}\cdot\mathbf{\widehat{p}}(z_k)\cdot z'_{k}\big\}$.}
        \end{algorithmic}}
\end{algorithm}

\section{Numerical experiments}
\label{sec:Numerical}
This section presents several numerical experiments conducted to validate our theoretical findings and demonstrate the high numerical performance of the CIM-CLG algorithm.

To quantitatively evaluate the accuracy of the method, we introduce the following error measures. When the exact solution $p(t)$ is available, we define the temporal error $error_N$ as a function of  the time discretization parameter $N$, and the spatial error $error_M$ as a function of the polynomial degree $M$, respectively,
\begin{displaymath}
  error_{N}(N)=\max\limits_{t_{0}\leq t\leq \Lambda t_{0}}\|p(t)-p_{M}^{N}(t)\|_{L^{2}(\Omega)},
  \quad
  error_{M}(M)=\|p(t)-p_{M}^{N}(t)\|_{L^{2}(\Omega)}
\end{displaymath}
with fixed $M$ or $N$. The corresponding orders of convergence in space and time are defined as
\begin{displaymath}
\mathrm{Order_s}= \frac{\ln(error_{M}(M_{j})/error_{M}(M_{j+1}))}{\ln(M_{j+1}/M_{j})},\quad
\mathrm{Order_t}= \frac{\ln(error_{N}(N_{j})/error_{N}(N_{j+1}))}{\ln(N_{j+1}/N_{j})},
\end{displaymath}
where $\{M_j\}$ and $\{N_j\}$ are strictly increasing sequences of polynomial degrees and time discretization parameters, respectively. In cases where the exact solution is unknown, we employ the following error indicators
\begin{displaymath}\label{eq:Timeerr1}
error_{N}(N)=\max\limits_{t_{0}\leq t\leq \Lambda t_{0}}\left\|p_{M}^{100}(t)-p_{M}^{N}(t)\right\|_{L^{2}(\Omega)}, \quad
error_{M}(M)=\|p_{2M}^{N}(t)-P_{M}^{N}(t)\|_{L^{2}(\Omega)},
\end{displaymath}
again with fixed $M$ or $N$. The numerical convergence order is then estimated as
\begin{displaymath}
\mathrm{Order}= \frac{\ln(error_{M/2}/error_{M})}{\ln(2)},
\end{displaymath}
so that the numerical convergence behavior follows $\mathcal{O}(M^{-{\rm Order}})$.

Analogous definitions and convergence orders apply when errors are measured in the $L^{\infty}$-norm. Furthermore, Based on Prop. \ref{prop:PDF}, the MSD analysis in Sub. \ref{subsec:MSD}, and the result established in Lemma \ref{Lem:function}, we concluded that the dominant term in Problem \eqref{eq:problem} corresponds to the case $K=J=0$. To streamline the numerical simulations, we set $K=J=0$ throughout all numerical examples. This choice is not merely a computational convenience; it is a physically and mathematically justified simplification that focuses the analysis on the dominant dynamics while preserving the fundamental structure of the problem.

In all numerical examples, the domain is taken as $\Omega=(-1,1)^d$ ($d=1, 2$).  The contour parameters are set to $\delta'=\pi/8$ and $\widetilde{\alpha}=\pi/4$.  The experiments are carried out in MATLAB R2025a on a PC with an Intel(R) Core(TM) i7-6700 CPU @3.40GHz and 16.00GB of RAM.

\subsection{Example 1. (A scalar problem)}
To intuitively demonstrate the spectral accuracy and numerical stability of the CIM, we consider the following time fractional differential equation
\begin{equation}\label{eq:scalarpro}
{~_{0}^{RL}\mathfrak{I}_{t}^{1-\gamma}}\big(1+a{~_{0}^{RL}\mathfrak{D}_{t}^{\alpha}}\big)\partial_tp(t)
+\left(1+b{~_{0}^{RL}\mathfrak{D}_{t}^{\beta}}\right)\lambda p(t)=f(t),
\end{equation}
with exact solution $p(t)=1+t^{4/5}$ (which has limited temporal regularity), where $\lambda\in \mathbb{R}_+$ is a specified positive constant. We set $p_0=1$ accordingly, and the source term $f$ is explicitly derived as
\begin{itemize}
  \item
  $
  f(t)=\lambda\Big(1+t^{4/5}+\frac{b t^{-\beta}}{\Gamma(1-\beta)}
    +\frac{b\Gamma(9/5)t^{5/4-\beta}}{\Gamma(9/5-\beta)}\Big)
    +\frac{4}{5}\Gamma\Big(\frac{4}{5}\Big)\Big(\frac{t^{5/4-\gamma}}{\Gamma(9/5-\gamma)}
    +\frac{at^{5/4-\alpha-\gamma}}{\Gamma(9/5-\alpha-\gamma)}\Big).
  $
\end{itemize}
We employ the CIM for numerical discretization and quantify the absolute error as a function of the discretization parameter $N$ via
\begin{displaymath}
  \mathrm{Error}(N)=\max\limits_{t_{0}\leq t\leq \Lambda t_{0}}\Big|p(t)-p^{N}(t)\Big|.
\end{displaymath}

Figure \ref{Fig:example1}(a) shows the convergence behavior of $Error(N)$ for different values of $\Lambda$, demonstrating the spectral accuracy of the method. Figures \ref{Fig:example1}(b) and (c) present a stability analysis under various fractional orders $\alpha$, $\beta$, and $\gamma$, as well as different ratios $b/a$, further confirming the robustness of the CIM.

\begin{figure}[htbp]
  \centering
\begin{minipage}[c]{0.32\textwidth}
 \centering
 \centerline{\includegraphics[width=1\textwidth]{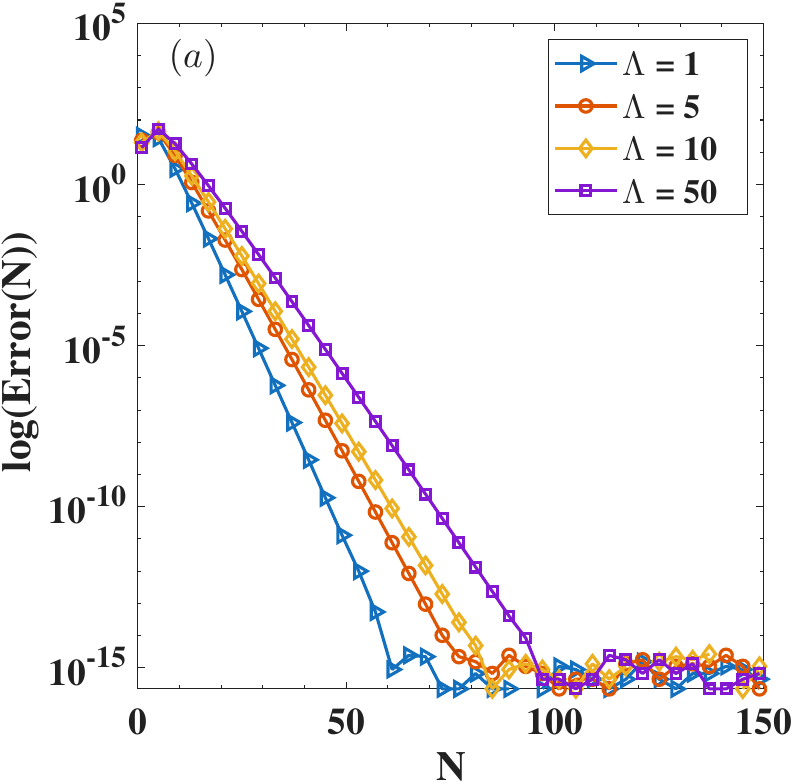}}
\end{minipage}
\begin{minipage}[c]{0.32\textwidth}
 \centering
 \centerline{\includegraphics[width=1\textwidth]{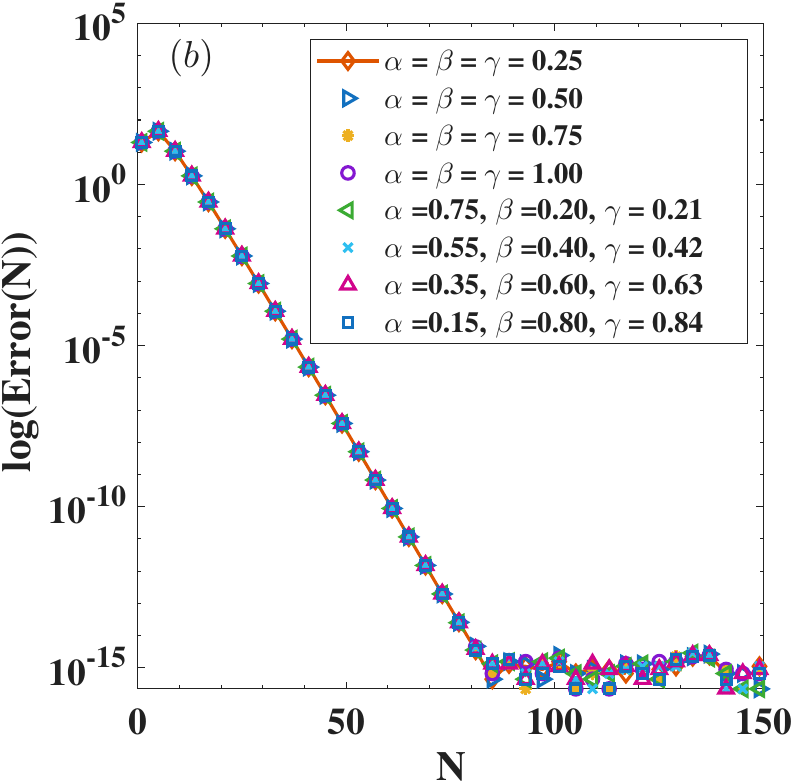}}
\end{minipage}
\begin{minipage}[c]{0.32\textwidth}
 \centering
 \centerline{\includegraphics[width=1\textwidth]{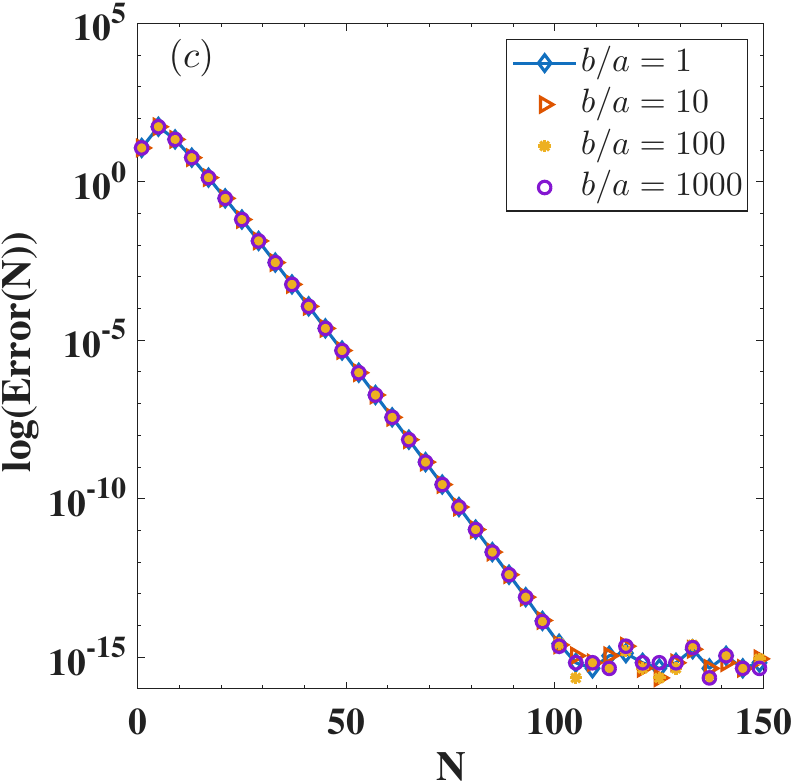}}
\end{minipage}
  \caption{Numerical performance of the CIM illustrating its spectral accuracy, stability, and robustness. $(a)$ Error evolution with the number of quadrature points $N$ for various $\Lambda$ (semi-discrete scheme \eqref{eq:computeu}), with $\alpha=0.50$, $\beta=0.35$, $\gamma=0.45$, $\lambda=1.50$, $a=1.00$, $b=100.00$, and $t=0.50$. $(b)$ Error under varying fractional orders, with $a=1.00$, $b=10.00$ and $\Lambda = 10.00$. $(c)$ Error across different ratios of $b/a$, with $\alpha=0.50$, $\beta=0.35$, $\gamma=0.45$, $\lambda=1.50$, and $t=0.50$.}
  \label{Fig:example1}
\end{figure}

As demonstrated in Fig. \ref{Fig:example1}, the proposed approach preserves the spectral accuracy and stability of the CIM across a wide range of fractional orders. These numerical findings are in full agreement with the theoretical predictions in Remark \ref{rmk:stability}. Specifically, the CIM achieves a convergence rate of $\mathcal{O}\left(\varepsilon + e^{-cN}\right)$, where $c = \mathcal{O}\left(1 / \ln(\Lambda)\right)$.

\subsection{Example 2. (1-D problem with known source)}\label{Ne:exapmel2}
In this example, we consider a scenario with zero initial data and a known source term.  By comparing the exact solution with numerical results, we verify that the proposed scheme achieves spatio-temporal spectral accuracy.

Let the exact solution of problem (\ref{eq:problem}) be $p(t,x)=t^{\kappa}\sin(\pi x)$, where $\kappa\in\mathbb{R}_+$ with the corresponding source term given by
\begin{itemize}
  \item $f(t)=\sin(\pi x)\Big[\Gamma(\kappa+1)\Big(\frac{t^{\kappa-\gamma}}{\Gamma(\kappa+1-\gamma)}
           +\frac{t^{\kappa-\alpha-\gamma}}{\Gamma(\kappa+1-\alpha-\gamma)}\Big)
     +\pi^{2}\Big(t^{\kappa}+b\frac{t^{\kappa-\beta}}{\Gamma(\kappa+1-\beta)}\Big)\Big].$
\end{itemize}
Numerical results are presented in Figures \ref{fig:1DExactNumerLCG} and \ref{fig:1DExactNumerCIM}, and Tables \ref{Tab:01}--\ref{Tab:02}.

\begin{figure}[htbp]
  \centering
\begin{minipage}[c]{0.32\textwidth}
 \centering
 \centerline{\includegraphics[width=1\textwidth]{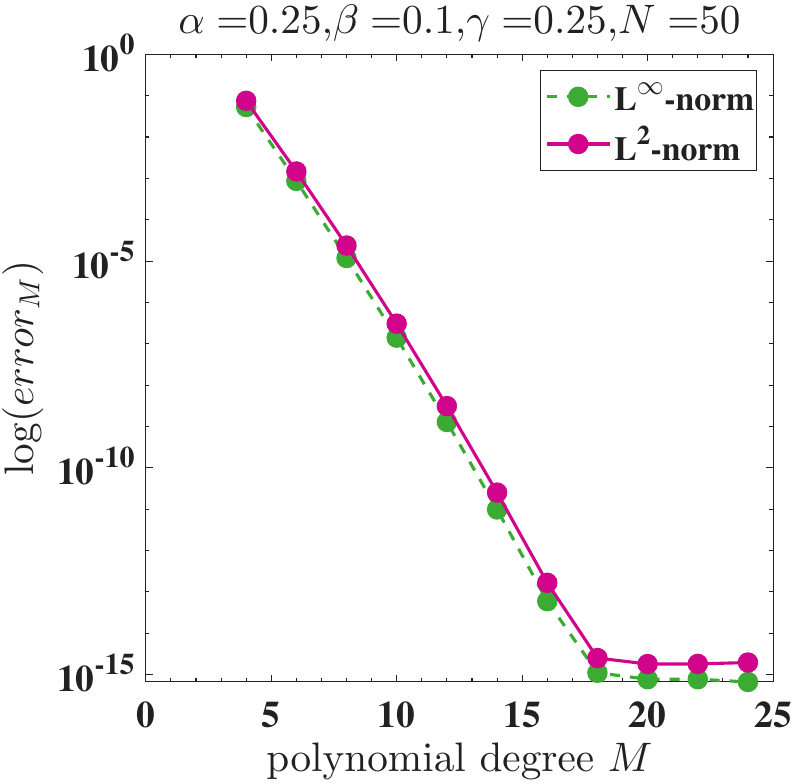}}
\end{minipage}
\begin{minipage}[c]{0.32\textwidth}
 \centering
 \centerline{\includegraphics[width=1\textwidth]{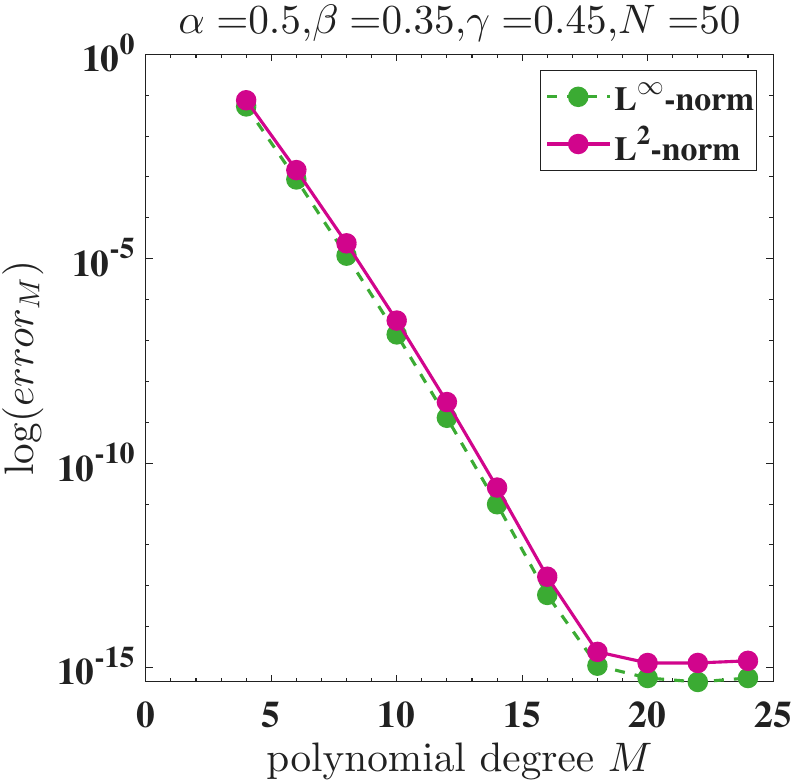}}
\end{minipage}
\begin{minipage}[c]{0.32\textwidth}
 \centering
 \centerline{\includegraphics[width=1\textwidth]{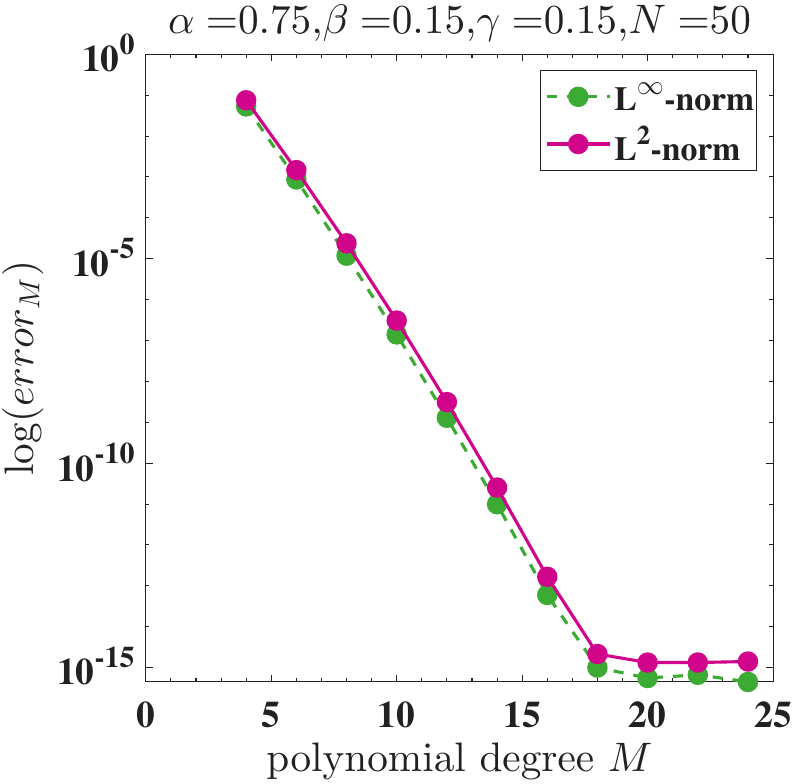}}
\end{minipage}
  \caption{Spatial spectral accuracy convergence of the CLG method.  The figure shows the exponential decay of spatial errors with increasing polynomial degree under different fractional orders, with given parameters $a=1$, $b=1000$, $t=0.5$, and $\kappa=1/4$ (low temporal regularity).}
  \label{fig:1DExactNumerLCG}
\end{figure}

\begin{figure}[t!]
  \centering
\begin{minipage}[c]{0.32\textwidth}
 \centering
 \centerline{\includegraphics[width=1\textwidth]{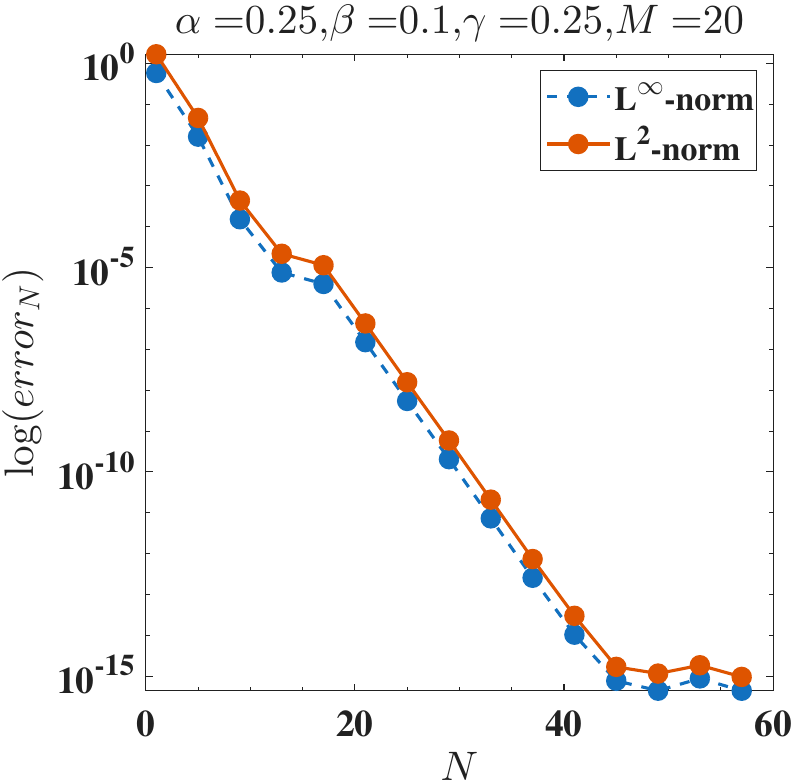}}
\end{minipage}
\begin{minipage}[c]{0.32\textwidth}
 \centering
 \centerline{\includegraphics[width=1\textwidth]{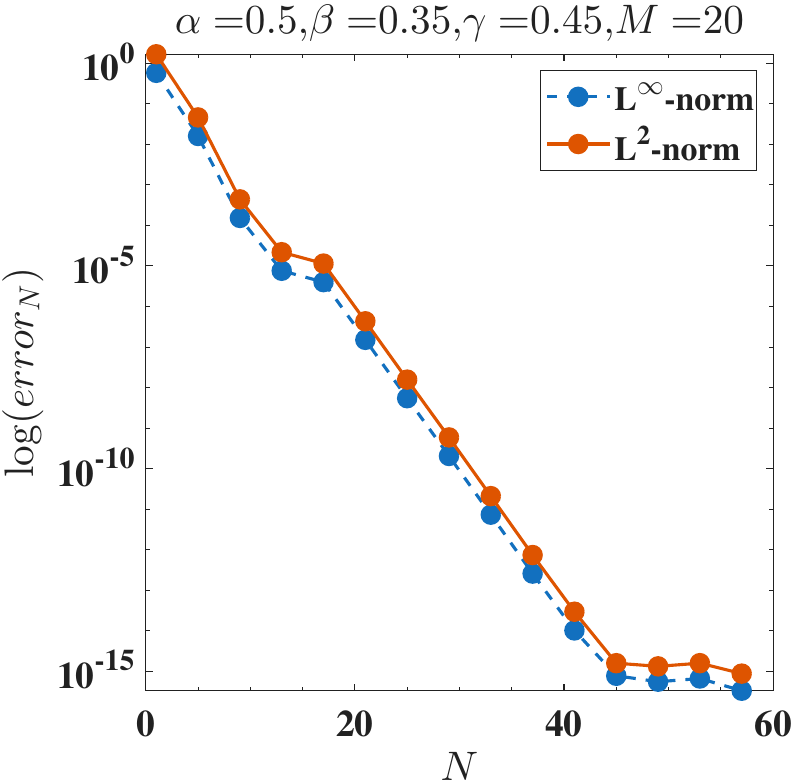}}
\end{minipage}
\begin{minipage}[c]{0.32\textwidth}
 \centering
 \centerline{\includegraphics[width=1\textwidth]{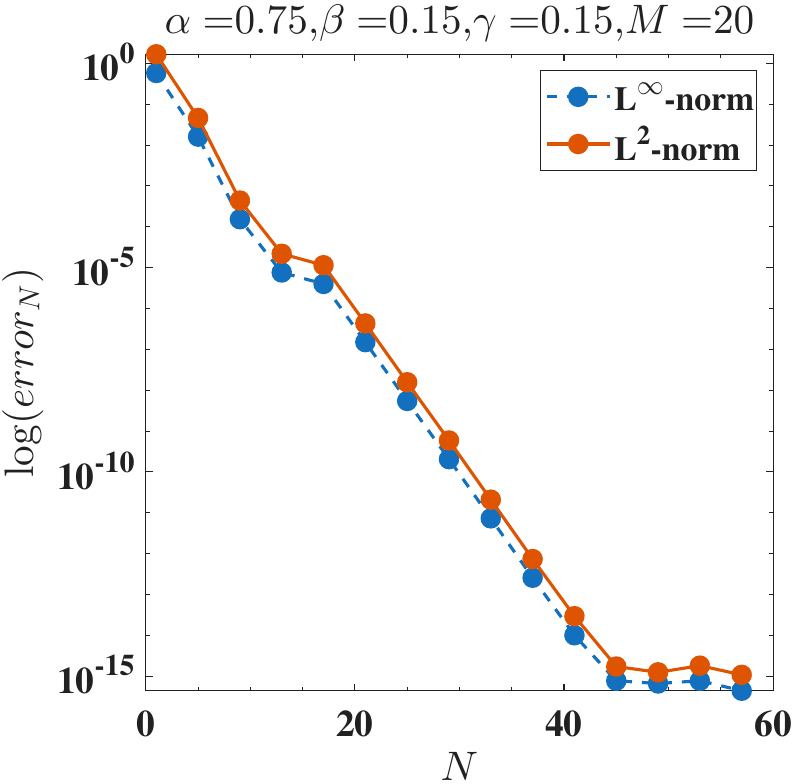}}
\end{minipage}
  \caption{Exponential convergence of the CIM in time. Errors are shown for different fractional orders with an increasing number of quadrature points $N$, with given parameters $M = 20$, $a = 1$, $b = 1000$, and $\kappa = 1/4$ (low temporal regularity).}
  \label{fig:1DExactNumerCIM}
\end{figure}

\begin{table}[t!]
\scriptsize
\caption{Spatial convergence order and $error_M$ of the CIM-FLG for 1D inhomogeneous problem (Example \ref{Ne:exapmel2}) with $\Lambda = 150$, $t_0 = 0.01$, $N=50$, $a=10$, $b=10$, and $t=0.5$.}
\label{Tab:01}
  \begin{center}
    \begin{tabular}{cccccccccc}
\toprule
\multirow{2}{*}{$M$} & \multicolumn{4}{c}{$\alpha=0.25$, $\beta=0.10$, $\gamma=0.25$} & \multicolumn{4}{c}{$\alpha=0.5$, $\beta=0.35$, $\gamma=0.45$}  \\
\cmidrule(r){2-5} \cmidrule(r){6-9}
&  $L^2$-error  &  Order   &   $L^{\infty}$-error &  Order & $L^2$-error    &  Order   &   $L^{\infty}$-error  &  Order      \\
\midrule
4
&6.3122E-02  & ---             &4.4634E-02  & ---              &6.3123E-02   & ---           & 4.4634e-02    & ---             \\

6
&1.2349E-03  & $M^{-9.7027}$   &7.2972E-04   & $M^{-10.1453}$  &1.2349E-03  & $M^{-9.7026}$   &7.2972E-04   &$M^{-10.1453}$    \\

8
&1.9993E-05  & $M^{-14.3331}$  &9.9806E-06   & $M^{-14.9193}$  &1.9993E-05  & $M^{-14.3331}$  &9.9806E-06   & $M^{-14.9193}$   \\

10
&2.5826E-07  & $M^{-19.4903}$  &1.1835E-07   & $M^{-19.8739}$  &2.5826E-07  & $M^{-19.4903}$  &1.1835E-07   & $M^{-19.8739}$   \\

12
&2.6230E-09  & $M^{-25.1735}$  &1.0815E-19   & $M^{-25.7530}$  &2.6230E-09  & $M^{-25.1735}$   &1.0815E-9  & $M^{-25.7530}$   \\

14
&2.1245E-11  & $M^{-31.2416}$  &8.2948E-12   & $M^{-31.5952}$  & 2.1245E-11  & $M^{-31.24170}$   &8.2949E-12  & $M^{-31.5952}$   \\

16
&1.3955E-13  & $M^{-37.6352}$  &5.0293E-14   & $M^{-38.2345}$  &1.3957E-13  & $M^{-37.6339}$   &5.0071E-14  & $M^{-38.2678}$   \\
\bottomrule
   \end{tabular}
  \end{center}
\end{table}

\begin{table}[htbp]
\scriptsize
\caption{Spatial convergence order and $error_M$ of the CIM-FLG for 1D inhomogeneous problem (Example \ref{Ne:exapmel2}) with $\Lambda = 150$, $t_0 = 0.01$, $N=50$, $a=10$, $b=10$, and $t=0.5$.}
\label{Tab:02}
  \begin{center}
   \begin{tabular}{cccccccccc}
\toprule
\multirow{2}{*}{$M$} & \multicolumn{4}{c}{$\alpha=0.75$, $\beta=0.15$, $\gamma=0.15$} & \multicolumn{4}{c}{$\alpha=1.0$, $\beta=1.0$, $\gamma=1.0$}  \\
\cmidrule(r){2-5} \cmidrule(r){6-9}
&  $L^2$-error      &  Order   &   $L^{\infty}$-error  &  Order
&  $L^2$-error      &  Order   &   $L^{\infty}$-error  &  Order  \\
\midrule
4
&6.3124E-02   & ---             & 4.4635E-02   & ---            &6.3126E-02  & ---            & 4.4637E-02     & ---   \\

6
&1.2349E-03   & $M^{-9.7027}$   & 7.2973E-04  &$M^{-10.1453}$   &1.2349E-03  & $M^{-9.7027}$  & 7.2975E-04  & $M^{-10.1454}$   \\

8
&1.9993E-05   & $M^{-14.3331}$  & 9.9806E-06  &$M^{-14.9193}$   &1.9993E-05  & $M^{-14.3331}$ & 9.9806E-06   & $M^{-14.9194}$  \\

10
&2.5826E-07   & $M^{-19.4903}$  & 1.1835E-07  & $M^{-19.8739}$  &2.5826E-07  & $M^{-19.4903}$ & 1.1835E-07   & $M^{-19.8739}$  \\

12
&2.6230E-09   & $M^{-25.1735}$  & 1.0815E-09  & $M^{-25.7531}$  &2.6230E-09  & $M^{-25.1735}$  & 1.0815E-09   & $M^{-25.7530}$  \\

14
&2.1245E-11   & $M^{-31.2416}$  & 8.2948E-12  & $M^{-31.5952}$  &2.1245E-11  & $M^{-31.2417}$  & 8.2949E-12   & $M^{-31.5952}$  \\

16
&1.3967E-13   & $M^{-37.6289}$  & 5.0182E-14  & $M^{-38.2511}$  &1.3969E-13  & $M^{-37.6274}$  & 5.0293E-14   & $M^{-38.2346}$  \\
\bottomrule
    \end{tabular}
  \end{center}
\end{table}

\begin{table*}[t!]
\scriptsize
\caption{The CPU time cost and the needs of polynomial degree $M$ in solving the inhomogeneous case of 1D problem (Example \ref{Ne:exapmel2}) by CIM-FLG with $N=50$, $t=0.5$ and prescript error $\left\|p(t,r)-p_M^{N}\right\|_{L^{2}(\infty)}\leq2.2025\times10^{-5}$.}
\label{Tab:03}
  \begin{center}
   \begin{tabularx}{0.98\linewidth}{ccccc}
\toprule
\multirow{2}{*}{} & \multicolumn{2}{c}{$a=1$, $b=100$, $t=0.5$} & \multicolumn{2}{c}{$a=1$, $b=100$, $t=0.5$}  \\
\cmidrule(r){2-3} \cmidrule(r){4-5}
&  ($0.25$, $0.10$, $0.25$)     & ($0.50$, $0.35$, $0.45$)
&  ($0.75$, $0.15$, $0.15$)     & ($1.00$, $1.00$, $1.00$)       \\
\midrule
Polynomial degree $M$          &  14        & 14        & 14       & 14           \\
CPU time (s)     &  0.0937   & 0.0944   & 0.0953  & 0.0890  \\
\bottomrule
    \end{tabularx}
  \end{center}
\end{table*}

In both Figures \ref{fig:1DExactNumerLCG} and \ref{fig:1DExactNumerCIM}, we set $\kappa = \frac{1}{4}$, which corresponds to the solution with limited temporal regularity. The numerical results demonstrate that the proposed CIM-CLG algorithm effectively solves the MT-TFJE even under conditions of low temporal regularity. As further shown in Tables \ref{Tab:01} and \ref{Tab:02}, the algorithm achieves high convergence orders and maintains computational efficiency for the classical JE (with $\alpha = \beta = \gamma = 1.0$). Moreover, Table \ref{Tab:03} highlights the significant acceleration attained when the CIM-CLG algorithm is combined with FFT. Collectively, these numerical experiments confirm that the CIM-CLG algorithm developed in this paper delivers robust numerical performance, including spatio-temporal spectral accuracy, and is effective for solving both the MT-TFJE and the classical JE.

\subsection{Example 3. (Homogeneous 1D problem)}\label{Nuex:31ds}
This example examines the numerical performance of the proposed scheme applied to the homogeneous case of Problem~\eqref{eq:problem}. To illustrate the dynamic behaviors governed by both the MT-TFJE and its classical counterpart JE, we visualize the evolution of the numerical solution $p(t,x)$ in one spatial dimension under varying parameters $\alpha$, $\beta$, and $\gamma$. The problem is configured with zero source term ($f \equiv 0$) and initial condition given by
\begin{itemize}
  \item  $p(0,x)=\exp(-30x^2)$.
\end{itemize}

Since no analytical solution is available in this setting, we assess numerical accuracy through relative error measures. Specifically, Tables \ref{Tab:04}--\ref{Tab:06} report the $L^2$-error of the CIM temporal discretization, together with the spatial $L^2$-error, $L^\infty$-error, and corresponding convergence orders of the CLG scheme under different parameter configurations. A reference solution computed using $N = 100$ quadrature points is adopted for error assessment. Additionally, Fig.~\ref{fig:1DExactNumerCIM-CLGW} displays the evolution of the solution under various parameter values. These comprehensive numerical experiments confirm the reliability and high accuracy of the proposed scheme in simulating both MT-TFJE and classical JE.

\begin{table*}[t!]
\scriptsize
\centering
\caption{The errors $error_N$ of CIM-FLG for 1D homogeneous problem (Example \ref{Nuex:31ds}) with $\Lambda = 150$, $t_0 = 0.01$, $a=10$, $b=10$, $M=20$ and $t=0.5$.}
\label{Tab:04}
\begin{center}
\begin{tabularx}{0.98\linewidth}{c|cccccc}
\Xhline{0.8pt}
\diagbox{($\alpha,\beta,\gamma$)}{$N$} & $4$     & $6$      &$10$      &$20$       & $30$  & $50$ \\
\hline
($0.25$, $0.10$, $0.25$)
&9.4000E-04   & 1.7688E-03   & 3.8173E-05  & 1.0396E-08   & 1.6637E-11    & 1.9752E-16  \\

($0.50$, $0.35$, $0.45$)
&2.8532E-03   & 1.9841E-03   & 4.4240E-05  & 1.5780E-08   & 2.1594E-11    & 1.6251E-16  \\

($0.75$, $0.15$, $0.15$)
&2.4563E-03   & 2.4607E-03   & 5.6212E-05  & 1.8726E-08   & 2.4277E-11    & 1.1142E-16  \\

($1.00$, $1.00$, $1.00$)
&9.1824E-03   & 1.2907E-03   & 8.4327E-05  & 2.0863E-08   & 2.8936E-11    & 3.1115E-16  \\
\Xhline{0.8pt}
\end{tabularx}
\end{center}
\end{table*}

\begin{table*}[t!]
\scriptsize
\centering
\caption{Spatial convergence order and $error_M$ of CIM-FLG for 1D homogeneous problem (Example \ref{Nuex:31ds} with $\Lambda = 150$, $t_0 = 0.01$, $a=10$, $b=10$, $N=50$ and $t=0.5$.}
\label{Tab:05}
\begin{tabularx}{0.98\linewidth}{cccccccccc}
\toprule
\multirow{2}{*}{$M$} & \multicolumn{4}{c}{$\alpha=0.25$, $\beta=0.10$, $\gamma=0.25$} & \multicolumn{4}{c}{$\alpha=0.50$, $\beta=0.35$, $\gamma=0.45$}  \\
\cmidrule(r){2-5} \cmidrule(r){6-9}
&  $L^2$ -error      &  Order   &   $L^{\infty}$ -error &  Order
&  $L^2$ -error      &  Order   &   $L^{\infty}$ -error  &  Order         \\
\midrule
6
&7.1487E-02  & ---            & 4.9776E-02    & ---            &7.0297E-02     & ---             & 4.8420E-02     & ---               \\

12
&5.1394E-03  & $M^{-3.7980}$   & 2.2142E-03   & $M^{-4.4906}$  &5.0474E-03   & $M^{-3.7998}$    & 2.0330E-03    &$M^{-4.5739}$               \\

24
&2.1303E-03  & $M^{-1.2706}$   & 5.9757E-04   & $M^{-1.8896}$  &2.1151E-03   & $M^{-1.2548}$    & 6.0260E-04    & $M^{-1.7543}$              \\
\bottomrule
\end{tabularx}
\end{table*}

\begin{table*}[t!]
\scriptsize
\centering
\caption{Spatial convergence order and $error_M$ of CIM-FLG for 1D homogeneous problem (Example \ref{Nuex:31ds}) with $\Lambda = 150$, $t_0 = 0.01$, $a=10$, $b=10$, $N=50$ and $t=0.5$.}
\label{Tab:06}
\begin{tabularx}{0.98\linewidth}{cccccccccc}
\toprule
\multirow{2}{*}{$M$} & \multicolumn{4}{c}{$\alpha=0.75$, $\beta=0.15$, $\gamma=0.15$} & \multicolumn{4}{c}{$\alpha=1.00$, $\beta=1.00$, $\gamma=1.00$}  \\
\cmidrule(r){2-5} \cmidrule(r){6-9}
&  $L^2$-error      &  Order   &   $L^{\infty}$-error  &  Order
&  $L^2$-error      &  Order   &   $L^{\infty}$-error  &  Order  \\
\midrule
6
&6.8378E-02  & ---             &4.6514E-02    & ---            &7.4670E-02    & ---            & 4.8411E-02     & ---               \\

12
&4.9166E-03  & $M^{-3.7978}$   &1.8325E-03    & $M^{-4.6658}$  &5.4849E-03   & $M^{-3.7670}$   & 1.9798E-03     &$M^{-4.6119}$               \\

24
&2.0815E-03  & $M^{-1.2401}$   &6.0649E-04    & $M^{-1.5953}$  &2.3835E-03   & $M^{-1.2024}$   & 7.3515E-04    & $M^{-1.4293}$              \\
\bottomrule
\end{tabularx}
\end{table*}

\begin{figure}[t!]
  \centering
\begin{minipage}[c]{.33\textwidth}
 \centering
 \centerline{\includegraphics[width=1\textwidth]{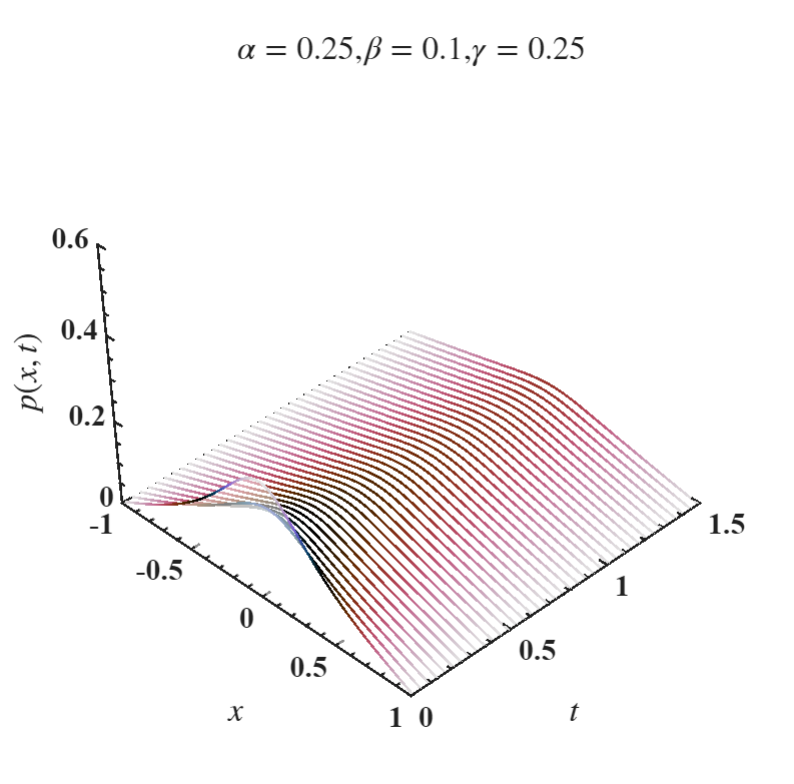}}
\end{minipage}
\begin{minipage}[c]{.33\textwidth}
 \centering
 \centerline{\includegraphics[width=1\textwidth]{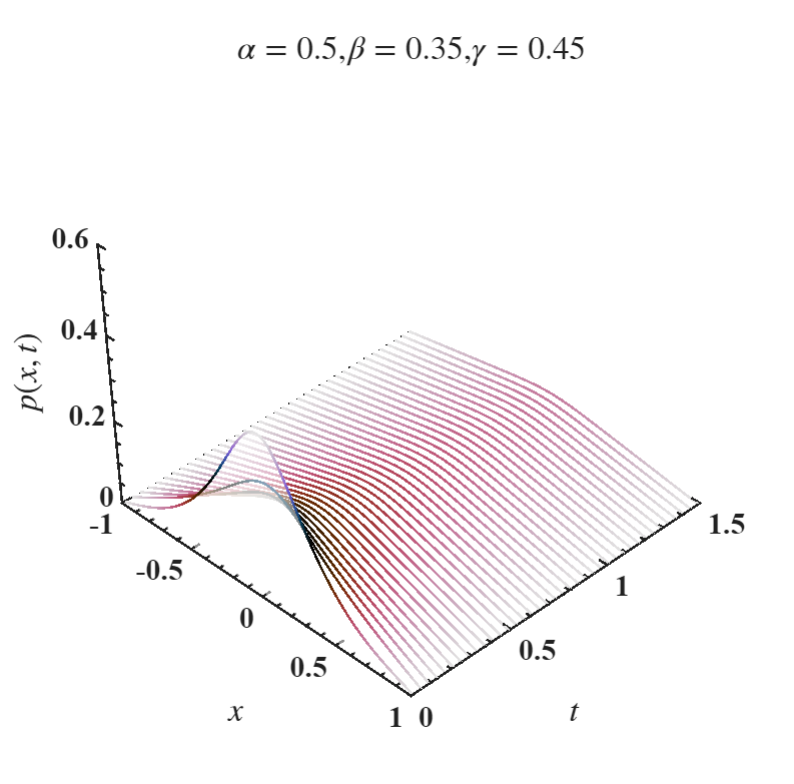}}
\end{minipage}
\begin{minipage}[c]{.33\textwidth}
 \centering
 \centerline{\includegraphics[width=1\textwidth]{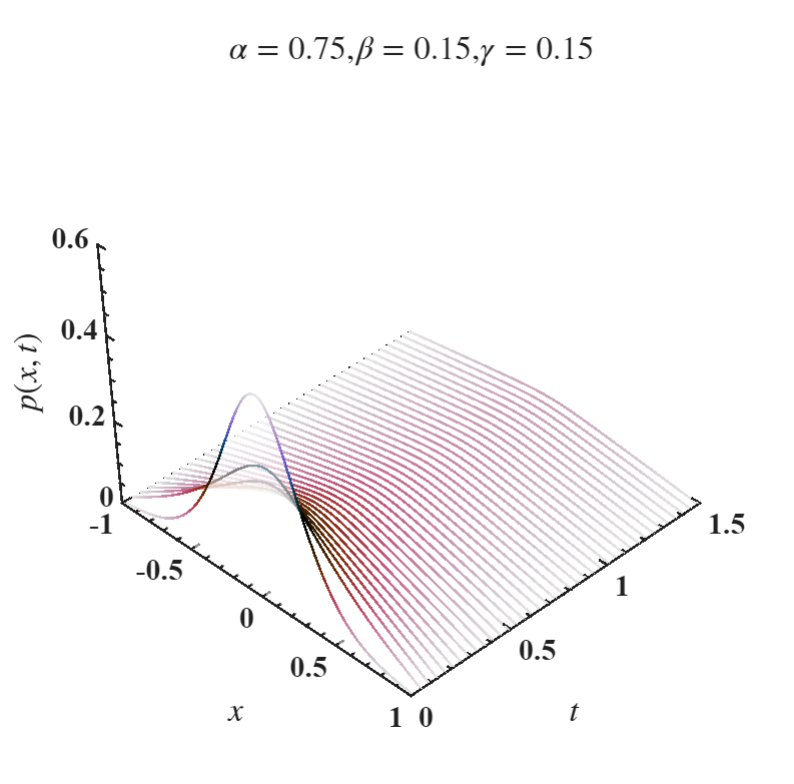}}
\end{minipage}
\begin{minipage}[c]{.33\textwidth}
 \centering
 \centerline{\includegraphics[width=1\textwidth]{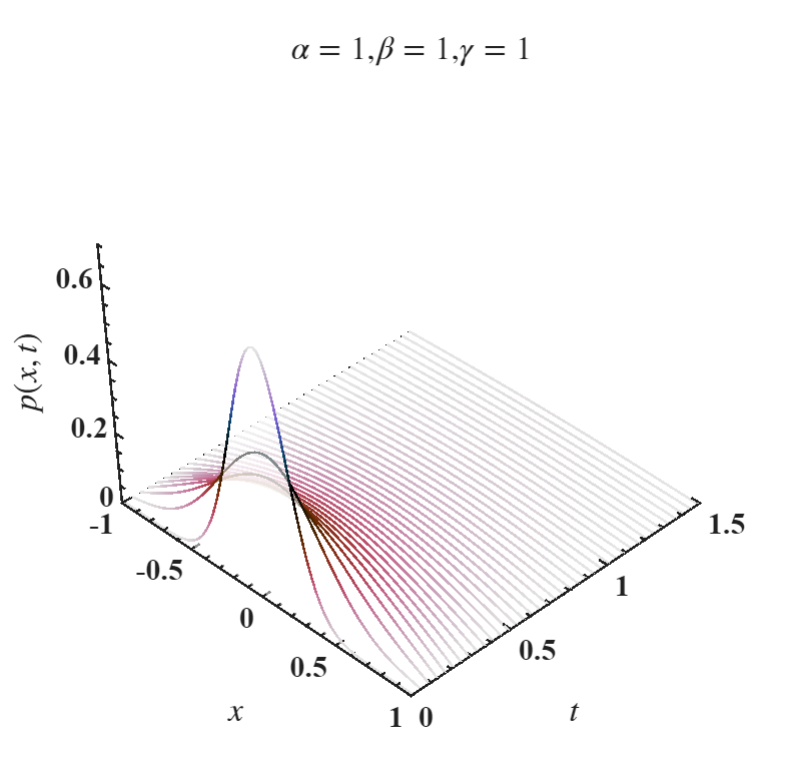}}
\end{minipage}
  \caption{The dynamic behavior of the two-dimensional solution $p(x,t)$ to 1-D Problem~\eqref{eq:problem} under varying fractional orders are presented. The numerical experiment uses the following parameter values: $\Lambda = 150$, $t_0 = 0.01$, $N = 50$, $M = 128$,  $a=10$, and $b=10$, with initial condition $p_0(x)=exp(-30x^2)$, as specified in Example \ref{Nuex:31ds}.}
  \label{fig:1DExactNumerCIM-CLGW}
\end{figure}

\subsection{Example 4. (Homogeneous 2D problem)}\label{Examp:2D}
In this example, we consider a two-dimensional homogeneous problem solved by the CIM-CLG algorithm, with the evolution of the numerical solution visualized. The system is subject to the initial condition
\begin{itemize}
  \item $p_0(x,y)=−{\pi}^{-3}(0.75+0.3\cos⁡(\pi x))^{-1}(0.75+0.3\sin⁡(\pi y))^{-1}e^{-10\,xy}$.
\end{itemize}

Numerical results under various parameter configurations are summarized in Tables \ref{Tab:07}--\ref{Tab:09}, and the temporal evolution for three representative cases is depicted in Fig. \ref{fig:2DPP2}. This approach tightly integrates two objectives—validating the algorithm and understanding the features of the solution—highlighting its practical utility. Furthermore, it demonstrates the high numerical performance and robustness of our proposed algorithm in solving higher-dimensional problems.

\begin{table*}[t!]
\scriptsize
\centering
\caption{The errors $error_N$ of CIM-FLG for 2D homogeneous problem (Example \ref{Examp:2D}) with $\Lambda = 150$, $t_0 = 0.01$, $a=10$, $b=10$, $M=12$ and $t=0.5$.}
\label{Tab:07}
\begin{center}
\begin{tabularx}{0.98\linewidth}{c|cccccc}
\Xhline{0.8pt}
\diagbox{($\alpha,\beta,\gamma$)}{$N$} & $4$     & $6$      &$10$      &$20$       & $30$  & $50$ \\
\hline
($0.25$, $0.10$, $0.25$)
&1.3895E-01    & 3.5169E-03    & 2.4167E-03  & 7.2364E-10    &  8.8657E-13     & 8.1332E-13  \\

($0.50$, $0.35$, $0.45$)
&5.5365E-02    & 6.1928E-03    & 1.4888E-03  & 4.4390E-10    &  9.7655E-13     & 7.2554E-13  \\

($0.75$, $0.15$, $0.15$)
&1.6974E-01    & 1.4018E-02    & 8.0089E-03  & 2.0357E-09    & 3.2825E-12      & 2.6205E-12  \\

($1.00$, $1.00$, $1.00$)
& 1.9945E-01   & 6.0303E-02    & 1.3810E-02  & 5.8716E-09    & 2.8600E-12      & 2.7158E-12  \\
\Xhline{0.8pt}
\end{tabularx}
\end{center}
\end{table*}

\begin{table*}[htbp]
\scriptsize
\centering
\caption{Spatial convergence order and $error_M$ of CIM-FLG for 2D homogeneous problem (Example \ref{Examp:2D}) with $\Lambda = 150$, $t_0 = 0.01$, $a=10$, $b=10$, $N=50$ and $t=0.5$.}
\label{Tab:08}
\begin{tabularx}{0.98\linewidth}{cccccccccc}
\toprule
\multirow{2}{*}{$M$} & \multicolumn{4}{c}{$\alpha=0.25$, $\beta=0.15$, $\gamma=0.25$} & \multicolumn{4}{c}{$\alpha=0.50$, $\beta=0.35$, $\gamma=0.45$}  \\
\cmidrule(r){2-5} \cmidrule(r){6-9}
&  $L^2$ -error      &  Order   &   $L^{\infty}$ -error &  Order
&  $L^2$ -error      &  Order   &   $L^{\infty}$ -error  &  Order         \\
\midrule
6
&4.2408E-02  & ---            & 2.9598E-02    & ---           &4.9963E-03    & ---             & 3.4536E-03     & ---               \\

12
&2.4301E-03  & $M^{-2.0626}$   & 1.3158E-03   & $M^{-2.2457}$  &2.8469E-04   & $M^{-2.0667}$   & 1.4780E-04    &$M^{-2.2732}$               \\

24
&2.3139E-04  & $M^{-1.6963}$   & 6.4573E-05   & $M^{-2.1744}$  &1.4283E-05   & $M^{-1.6853}$   & 7.8389E-06    & $M^{-2.1184}$              \\
\bottomrule
\end{tabularx}
\end{table*}

\begin{table*}[t!]
\scriptsize
\centering
\caption{Spatial convergence order and $error_M$ of CIM-FLG for 2D homogeneous problem (Example \ref{Examp:2D}) with $\Lambda = 150$, $t_0 = 0.01$, $a=10$, $b=10$, $N=50$ and $t=0.5$.}
\label{Tab:09}
\begin{tabularx}{0.98\linewidth}{cccccccccc}
\toprule
\multirow{2}{*}{$M$} & \multicolumn{4}{c}{$\alpha=0.75$, $\beta=0.15$, $\gamma=0.15$} & \multicolumn{4}{c}{$\alpha=1.00$, $\beta=1.00$, $\gamma=1.00$}  \\
\cmidrule(r){2-5} \cmidrule(r){6-9}
&  $L^2$-error      &  Order   &   $L^{\infty}$-error  &  Order
&  $L^2$-error      &  Order   &   $L^{\infty}$-error  &  Order  \\
\midrule
6
&5.9357E-03  & ---             &3.9014E-03    & ---           &2.0282E-02    & ---            & 1.3753E-02     & ---               \\

12
&3.3586E-04  & $M^{-2.0717}$   &1.3929E-04   & $M^{-2.4039}$  &1.1474E-03   & $M^{-2.0719}$   & 5.4455E-04    &$M^{-2.3292}$               \\

24
&3.4564E-05  & $M^{-1.6403}$   &1.0591E-05   & $M^{-1.8586}$  &1.1411E-04   & $M^{-1.6649}$   & 3.3661E-05    & $M^{-2.0080}$              \\
\bottomrule
\end{tabularx}
\end{table*}

\begin{figure}[hbpt]
  \centering
\begin{minipage}[c]{0.266\textwidth}
 \centering
 \centerline{\includegraphics[width=1\textwidth]{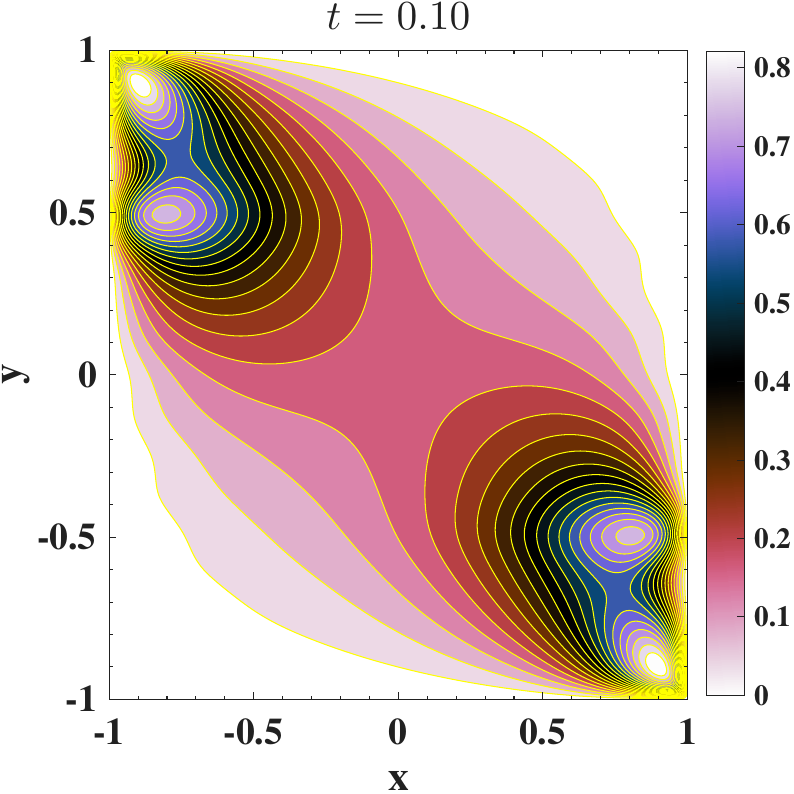}}
\end{minipage}
\begin{minipage}[c]{0.266\textwidth}
 \centering
 \centerline{\includegraphics[width=1\textwidth]{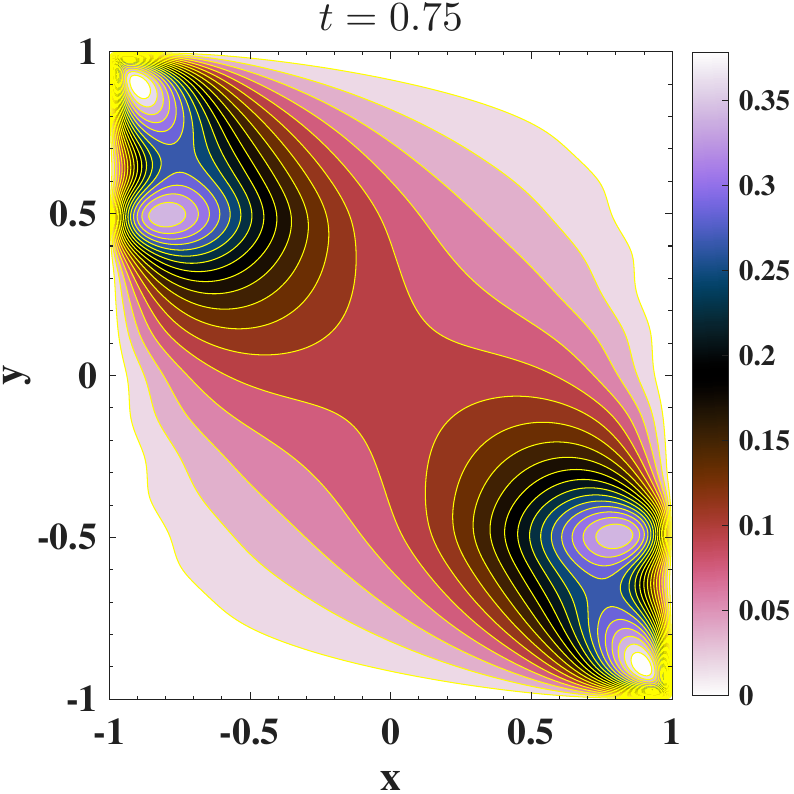}}
\end{minipage}
\begin{minipage}[c]{0.266\textwidth}
 \centering
 \centerline{\includegraphics[width=1\textwidth]{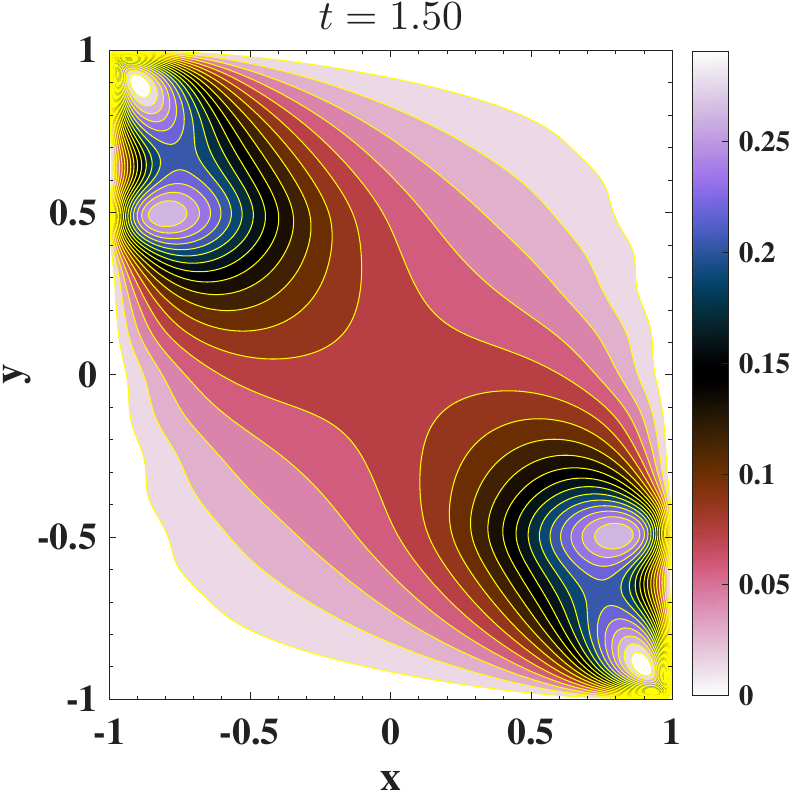}}
\end{minipage}
\begin{minipage}[c]{0.266\textwidth}
 \centering
 \centerline{\includegraphics[width=1\textwidth]{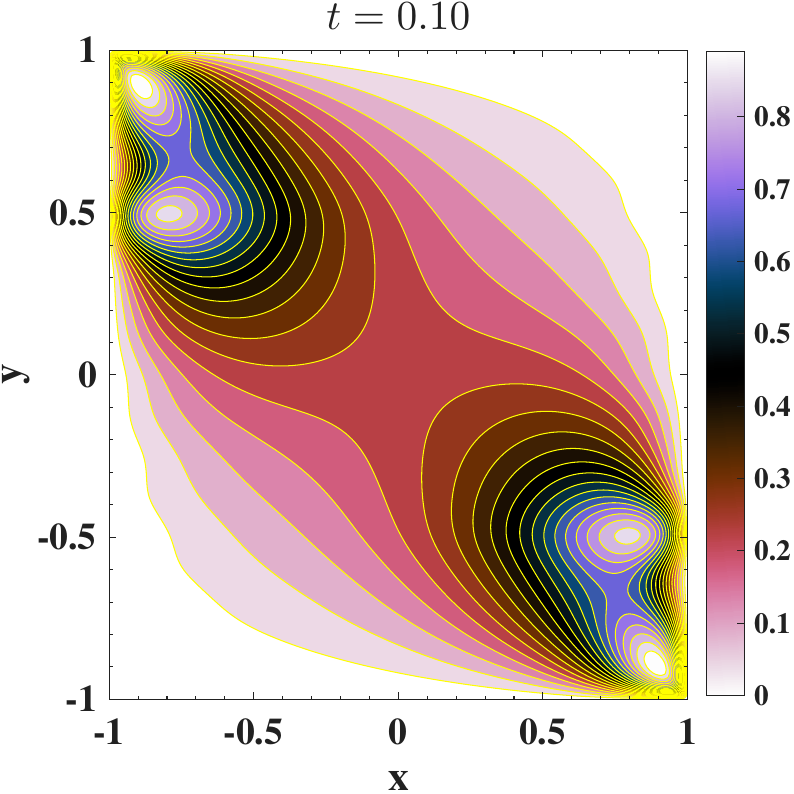}}
\end{minipage}
\begin{minipage}[c]{0.266\textwidth}
 \centering
 \centerline{\includegraphics[width=1\textwidth]{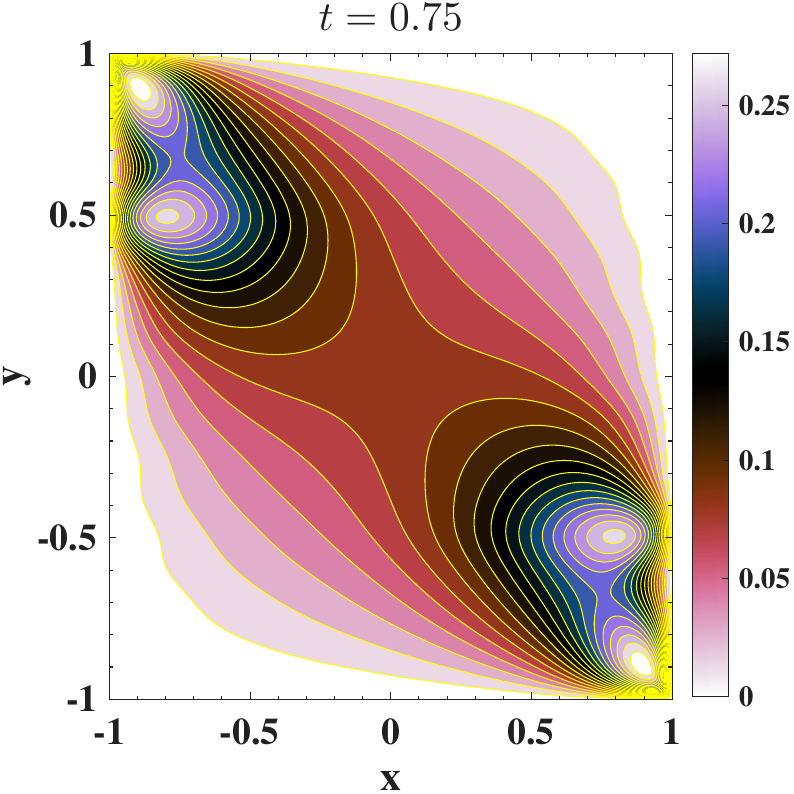}}
\end{minipage}
\begin{minipage}[c]{0.266\textwidth}
 \centering
 \centerline{\includegraphics[width=1\textwidth]{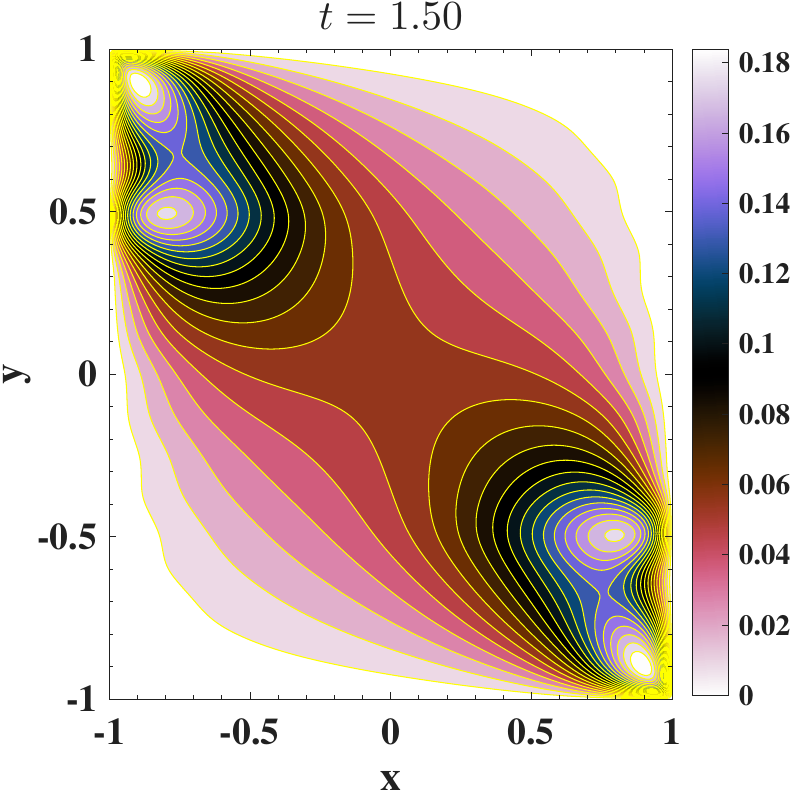}}
\end{minipage}
\begin{minipage}[c]{0.266\textwidth}
 \centering
\centerline{\includegraphics[width=1\textwidth]{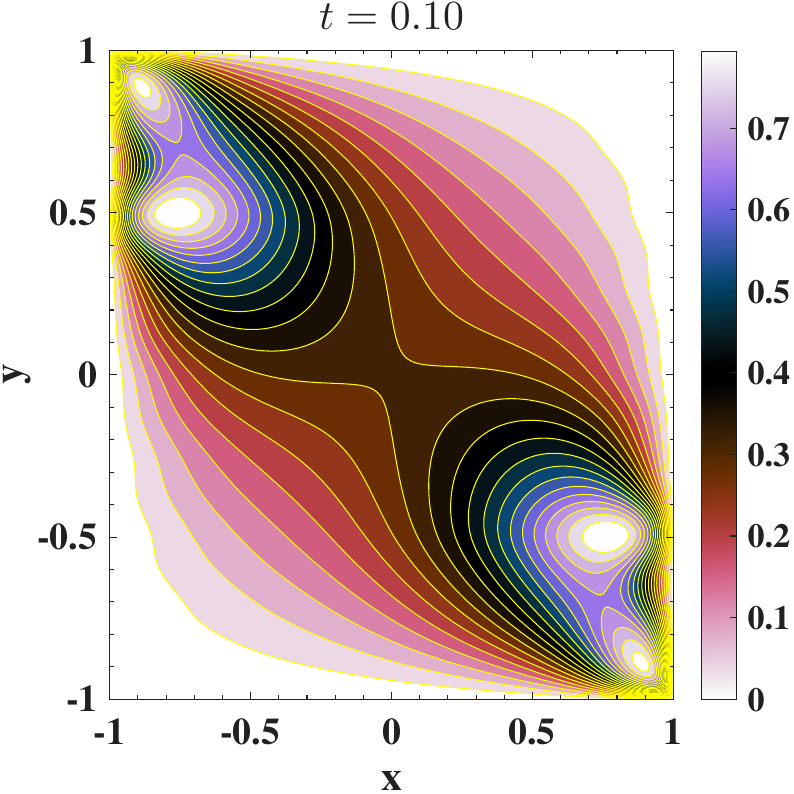}}
\end{minipage}
\begin{minipage}[c]{0.266\textwidth}
 \centering
 \centerline{\includegraphics[width=1\textwidth]{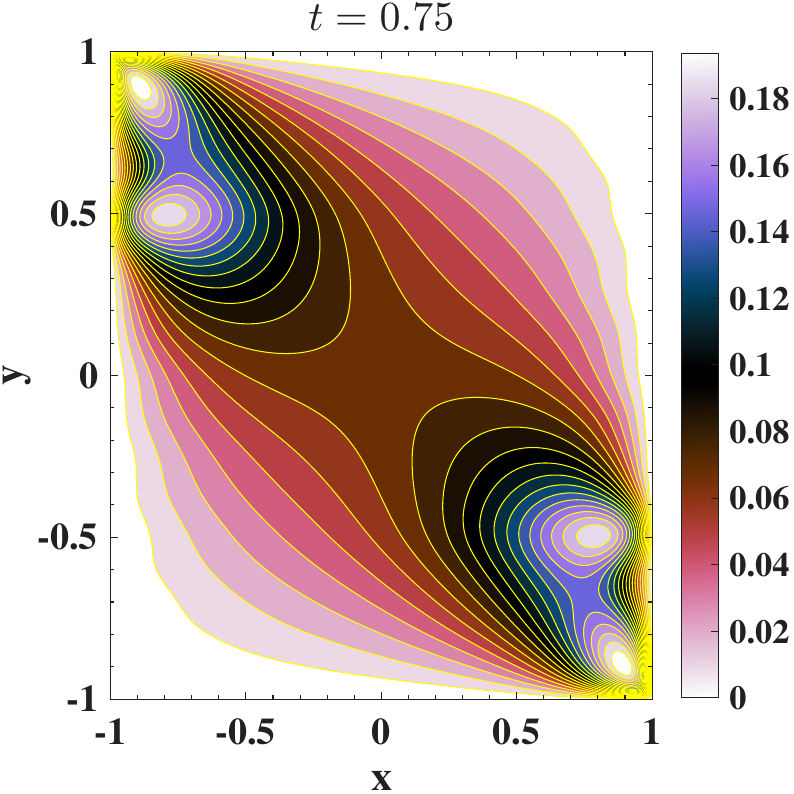}}
\end{minipage}
\begin{minipage}[c]{0.266\textwidth}
 \centering
 \centerline{\includegraphics[width=1\textwidth]{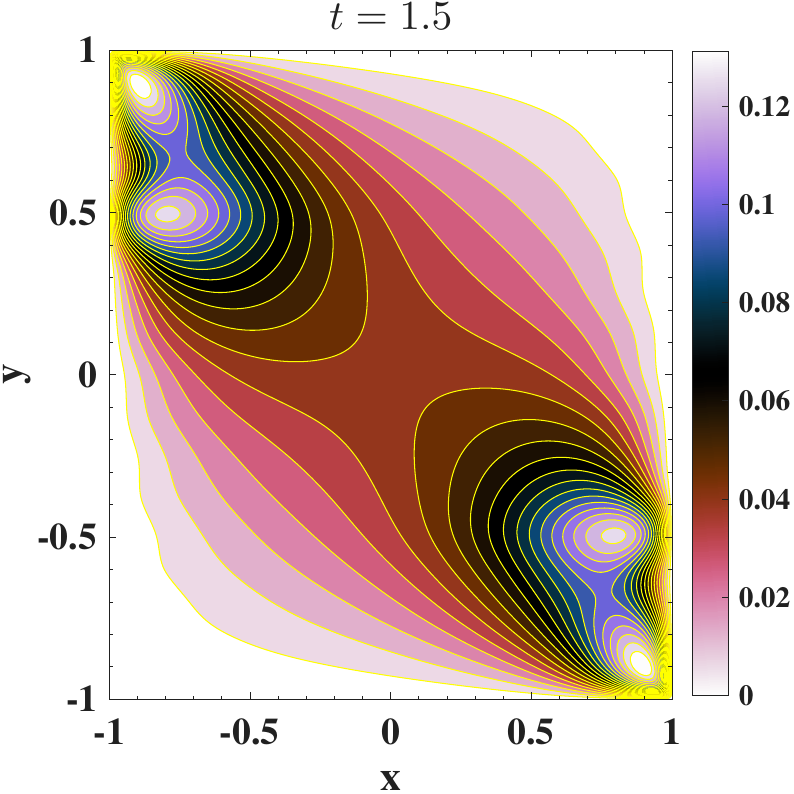}}
\end{minipage}
  \caption{Dynamic behavior of the two-dimensional solution $p(t,x,y)$ to Problem \eqref{eq:problem} for the initial condition in Example \eqref{Examp:2D} with a vanishing source term. The top row depicts the case with fractional orders $\alpha=0.25$, $\beta=0.15$, $\gamma=0.25$; the middle row is the case with fractional orders $\alpha=0.50$, $\beta=0.35$, $\gamma=0.45$;  while the bottom row corresponds to the classical case  $\alpha=0.75$, $\beta=0.15$, $\gamma=0.15$. Other parameters are fixed at $M = 20$, $a = 10$, $b = 10$, $\Lambda=150$, and $t_0=0.01$.}
  \label{fig:2DPP2}
\end{figure}

\section{Concluding remarks}
\label{sec:conclusions}
In this work, we present a comprehensive study of the generalized Jeffreys-type law governed by a multi-term time-fractional Jeffreys-type equation (MT-TFJE), integrating stochastic modeling, regularity analysis, and high-accuracy numerical computation. The governing equation is derived from first principles via microscopic stochastic processes, including an extended continuous-time random walk model and an overdamped Langevin equation coupled with a time-changing process. This derivation establishes a physical foundation for the MT-TFJE and clarifies its capacity to describe crossover diffusion phenomena characterized by distinct scaling exponents. We further establish the well-posedness and Sobolev regularity of the model, providing a priori estimates under both smooth and non-smooth initial data while laying a foundation for subsequent numerical error analysis. A novel spectral-accurate numerical, termed the CIM–CLG algorithm, is developed by combining a contour integral method for time discretization with a Chebyshev–Legendre Galerkin scheme for spatial approximation. The proposed algorithm achieves spectral accuracy in both space and time with significantly reduced computational and storage costs, supports full parallelism, and accommodates low temporal regularity in the solution. Detailed implementation procedures and new technical error estimates are provided. Extensive numerical experiments in one and two dimensions confirm the efficiency, accuracy, and robustness of the scheme, validating the theoretical findings and illustrating  the dynamic behavior of the underlying system.

The spatial semi-discrete CLG scheme requires specific geometric constraints on the computational domain, which limits its direct extension to problems defined on irregular regions. A practical workaround is to leverage the decay properties of the solution by embedding complex domains into larger regular domains, for example, circular or spherical regions, prior to numerical discretization. While effective, this domain-embedding strategy inevitably increases computational cost. Nevertheless, the high efficiency of the CIM maintains high accuracy even with a small number of quadrature points $N$ (e.g., $N=16$). For applications requiring faster computation, the acceleration strategy introduced in \cite{Ma2023a} can be incorporated into the current framework.

\appendix
\section{Definations}\label{def:definitions0}
The analysis of Prop. \ref{prop:PDF} relies on the properties of certain key functions, whose formal definitions and foundational properties we now recall.

\begin{definition}[Bernstein functions, see, e.g., \cite{Awad2020,Schilling2009}]\label{def:definitions}
\begin{description}
  \item[\textbf{Completely monotone function}] The function $\widehat{u}(\xi):(0,\infty)\rightarrow\mathbb{R}_{+}^{0}$ is a completely monotone {\rm(}c.m.{\rm)} function if it belongs to $C^{n}(0,\infty)$ with respect to $\xi$ and satisfies the condition $(-1)^{n}\partial^{n}_{\xi}u(\xi)\geq0$ for all $n=0,1,2,...$;
  \item[\textbf{Stieltjes function}] The function $\widehat{u}(\xi):(0,\infty)\rightarrow\mathbb{R}_{+}^{0}$ is a Stieltjes function {\rm(}SF{\rm)} with respect to $\xi$ if there exists a c.m. function $u(t)$ with respect to $t>0$ such that $\widehat{u}(\xi)=\int_{0}^{\infty}e^{-\xi t}u(t)\mathrm{d}t$;
  \item[\textbf{Bernstein function}] The function $\widehat{u}(\xi):(0,\infty)\rightarrow\mathbb{R}_{+}^{0}$ is a Bernstein function {\rm(}BF{\rm)} if $\hat{u}(\xi)$ is of class $C^{\infty}$ and satisfies the condition $(-1)^{n-1}\hat{u}^{(n)}(\xi)\geq0$ for all $n\in\mathbb{N}$;
  \item[\textbf{Complete Bernstein function}]The Bernstein function $\hat{u}(\xi):(0,\infty)\rightarrow\mathbb{R}_{+}^{0}$ is said to be a complete Bernstein function {\rm(}CBF{\rm)}, with respect to the same variable $\xi$ if, and only if, $\hat{u}(\xi)/\xi$ is a Stieltjes function.
\end{description}
\end{definition}

\begin{remark}\label{Rmk:FourF}
Let $\mathcal{CMF}$, $\mathcal{SF}$, $\mathcal{BF}$ and $\mathcal{CBF}$ denote the sets of completely monotone functions, Stieltjes functions, Bernstein functions and complete Bernstein functions, respectively, as defined in Defination \ref{def:definitions}. The following properties hold (see, e.g.,  \cite{Awad2021,Awad2020,Bazhlekova24,Schilling2009}):
\begin{description}
  \item[($i$)] A function $\hat{u}(\xi)\in\mathcal{CMF}$ if and only if there exists a nonnegative function $u(t)\geq0$ such that $\hat{u}(\xi)=\int_{0}^{\infty}u(t)\exp(-\xi t)dt$, $\xi>0$. According to Bernstein's theorem \cite[Thm.1.4]{Schilling2009}, a function is c.m. if and only if it is the Laplace transform of a non-negative measure. Moreover, the product and nonnegative linear combination of two c.m. functions are also c.m..
  \item[($ii$)] Every SF is a c.m. function (i.e., $\mathcal{SF}\subset\mathcal{CMF}$), but the converse is not always true. The nonnegative linear combination of two Stieltjes functions is also a SF. However, the product of two Stieltjes functions does not always yield a SF. If $\hat{u}, \hat{v}\in\mathcal{SF}$ with $\alpha,\beta\in(0,1)$ and $\alpha+\beta\leq1$, then $[\hat{u}(\xi)]^{\alpha}[\hat{v}(\xi)]^{\beta}\in\mathcal{SF}$.
  \item[($iii$)] If $\hat{u},\hat{v}\in\mathcal{BF}$ and $\hat{h}\in\mathcal{CMF}$,
      then the compositions $\hat{u}[\hat{v}(\xi)]\in\mathcal{BF}$, $\hat{h}[\hat{u}(\xi)]\in\mathcal{CMF}$, and $\hat{u}(\xi)/\xi\in\mathcal{CMF}$.
  \item[($iv$)] The nonnegative linear combination of CBFs is also a CBF. However, the product of two CBFs is not necessarily a CBF. If $\hat{u}, \hat{v}\in\mathcal{CBF}$ and $\alpha,\beta\in(0,1)$ with $\alpha+\beta\leq1$, then $[\hat{u}(\xi)]^{\alpha}[\hat{v}(\xi)]^{\beta}\in\mathcal{CBF}$. Furthermore, if $\hat{u}(\xi)\in\mathcal{CBF}$, then for any $a,\xi>0$, the function $\exp(-a\hat{u}(\xi))\in\mathcal{CMF}$.
\end{description}
\end{remark}
Some useful scenarios described in Remark \ref{Rmk:FourF} can be visualized in the following diagram:
\begin{displaymath}
\xymatrix@C=1.98cm@R=1.22cm{
  \mathcal{SF} \ar[r]^{(\mathcal{SF}\subset \mathcal{CMF})}
                & \mathcal{CMF}   \\
  \mathcal{CBF} \ar[u]^{1/\hat{u}(\xi)}  \mred{\ar@{->}[ur]|-{\exp(-a\hat{u}(\xi)),~a>0}} \ar[r]_{(\mathcal{CBF}\subset\mathcal{BF})}
                & \ar[u]_{\hat{u}(\xi)/\xi} \mathcal{BF}
                }
\end{displaymath}

\section{\textbf{The proof of Lemma \ref{thm:prop2}}}\label{proofa}
\begin{proof}
Let $z(\phi)\in \mathcal{N}_d\subseteq\Sigma_\theta$. For $|z|>\delta>1$ and $\alpha+\gamma<1$, it follows from Lemma \ref{Lem:function} and the definition of $\widehat{p}(r,z)$ in \eqref{eq:integrand} that
\begin{align}\label{ieq:uhatinhomo}
\big\|\widehat{p}(r,z)\big\|_{L^{2}(\Omega)}
&\leq C\big(|z|^{-1}\|p_0\|_{L^{2}(\Omega)}
      +|z|^{-(\alpha+\gamma)}\|\widehat{f}(z)\|_{(\mathcal{N}_d;L^{2}(\Omega))}\big)\\
&\leq C|z|^{-(\alpha+\gamma)}\big(\|p_0\|_{L^{2}(\Omega)}+\|\widehat{f}(z)\|_{(\mathcal{N}_d;L^{2}(\Omega))}\big).
\end{align}
From \eqref{eq:hyperboliccontour}, we have $z'(\phi)=i\mu\cos(i\phi-\widetilde{\alpha})$. Let $\phi=x+iy\in \mathcal{S}$ with $0<y<d$. Then,
\begin{equation}\label{eq:vxr}
u(t,x+iy)=\frac{\mu}{2\pi}e^{\mu\big(1-\sin(\widetilde{\alpha}+y-ix)\big)t}
\cos(\widetilde{\alpha}+y-ix)\widehat{u}\big(\mu(1-\sin(\widetilde{\alpha}+y-ix))\big).
\end{equation}
Let $l':=\widetilde{\alpha}+y$, so that $l'\in(\widetilde{\alpha}, \widetilde{\alpha}+d)\subset(0,\pi/2-\delta')$. Taking the $L^{2}$-norm to both sides yields
\begin{equation}\label{eq:estvxr}
\|u(t,r,x+iy)\|_{L^{2}(\Omega)}
\leq \frac{\mu}{2\pi}e^{\mu t(1-\sin l'\cosh x)}\big\|\cos(l'-ix)\widehat{u}\big(\mu(1-\sin(l'-ix))\big)\big\|_{L^{2}(\Omega)}.
\end{equation}
Using the earlier estimate \eqref{ieq:uhatinhomo} and he identity \eqref{eq:estvxr}. we obtain
\begin{displaymath}\begin{aligned}
& \big\|\cos(l'-ix)\widehat{u}(\mu(1-\sin (l'-ix)))\big\|_{L^{2}(\Omega)}\\
&\leq\frac{C|\cos(l'-ix)|}{|\mu(1-\sin(l'-ix))|^{\alpha+\gamma}}\big(\|p_0\|_{L^{2}(\Omega)}
+\|\widehat{f}(z)\|_{(\mathcal{N}_d;L^{2}(\Omega))}\big).
\end{aligned}\end{displaymath}
Note that
\begin{displaymath}
\frac{|\cos(l'-ix)|}{|1-\sin(l'-ix)|}\leq \sqrt{\frac{1+\sin l'}{1-\sin l'}}:=\mathcal{P}(l'),
\end{displaymath}
and
\begin{align*}
\frac{|\cos(l'-ix)|}{|\mu(1-\sin(l'-ix))|^{\alpha+\gamma}}
=&\frac{1}{|\mu(1-\sin(l'-ix))|^{\alpha+\gamma-1}}\frac{|\cos(l'-ix)|}{|\mu(1-\sin(l'-ix))|}\\
\leq&\frac{\mathcal{P}(l')}{\mu^{\alpha+\gamma}(\cosh x-\sin l')^{\alpha+\gamma-1}}.
\end{align*}
Since $\sin\widetilde{\alpha}<\sin l'=\sin(\widetilde{\alpha}+d)$, we conclude that
\begin{equation}\begin{aligned}\label{eq:vP}
\big\|u(t,r,x+iy)\big\|_{L^{2}(\Omega)}
&\leq C\Theta e^{-\mu t_0\sin(\widetilde{\alpha})\cosh(x)}e^{\mu t_0\Lambda}\big(\cosh(x)-\sin( l')\big)^{1-\alpha-\gamma}\\
&\leq C\Theta e^{-\mu t_0\sin(\widetilde{\alpha})\cosh(x)}(\cosh(x))^{1-\alpha-\gamma},
\end{aligned}\end{equation}
where $\Theta:=\frac{\mu^{1-\alpha-\gamma}\mathcal{P}(\widetilde{\alpha}+d)}{2\pi}\big(\|p_0\|_{L^{2}(\Omega)}
+\|\widehat{f}(z)\|_{(\mathcal{N}_d;L^{2}(\Omega))}\big)$.

A similar argument applies to the lower part of $\mathcal{S}$. For $\phi=x-iy\in \mathcal{S}$ with $-d<-y\leq0$, and $\sin(\widetilde{\alpha}-d)<\sin\widetilde{\alpha}<\sin(\widetilde{\alpha}+d)$, we obtain
\begin{equation}\label{eq:vN}
\begin{aligned}
 \big\|u(t,r,x-iy)\big\|_{L^{2}(\Omega)}
\leq C\Theta e^{\mu t_0\Lambda}e^{-\mu t_0\sin(\widetilde{\alpha}-d)\cosh(x)}(\cosh(x))^{1-\alpha-\gamma}.
\end{aligned}\end{equation}
Therefore, for any $\phi=x\pm iy\in \mathcal{S}$, the aforementioned estimates imply
\begin{displaymath}\begin{aligned}
\big\|u(t,r,\phi)\big\|_{L^{2}(\Omega)}
\leq C\Theta e^{\mu t_0\Lambda}e^{-\mu t_0\sin(\widetilde{\alpha}-d)\cosh (x)}(\cosh (x))^{1-\alpha-\gamma}.
\end{aligned}\end{displaymath}
Let $\rho:=1-\alpha-\gamma\in(0,1)$ and $\vartheta:=\mu t_0\sin(\widetilde{\alpha}-d)>0$. For any $\nu\in(0,1)$, following \cite{Fernandez2004}, we have
\begin{displaymath}\begin{aligned}
\big\|u(t,r,\phi)\big\|_{L^{2}(\Omega)}
&\leq C\Theta\mu^{-\rho}e^{\mu t_0\Lambda}\mu^{\rho}(\cosh x)^{\rho}e^{-\nu\vartheta\cosh x}e^{-(1-\nu)\vartheta\cosh x}\\
&\leq C\Theta\mu^{-\rho}\frac{e^{\mu t_0\Lambda}}{t_{0}^\rho}
\Big(\frac{\rho}{\nu\sin(\widetilde{\alpha}-d)e^{\nu\vartheta/\rho}}\Big)^{\rho}e^{-(1-\nu)\vartheta\cosh x},
\end{aligned}\end{displaymath}
To justify the last inequality, observe that
\begin{displaymath}\begin{aligned}
\lim\limits_{|x|\rightarrow\infty}\mu^{\rho}(\cosh x)^{\rho}e^{-\nu\vartheta\cosh x}
&=\lim\limits_{|x|\rightarrow\infty}\mu^{\rho}\Big(\frac{\cosh x}{e^{\nu\vartheta\cosh x/\rho}}\Big)^{\rho}
=\lim\limits_{|x|\rightarrow\infty}\mu^{\rho}\Big(\frac{\rho(\cosh x)'}{\nu\vartheta(\cosh x)'e^{\nu\vartheta\cosh x/\rho}}\Big)^{\rho}\\
&(1/e^{\nu\vartheta\cosh x/\rho}=1/e^{\nu\vartheta/\rho},~ {\rm with}~ \nu\vartheta/\rho>0,~ |x|\rightarrow\infty)\\
&=\frac{1}{t_0^{\rho}}\Big(\frac{\rho}{\nu\sin(\widetilde{\alpha}-d)e^{\nu\vartheta/\rho}}\Big)^{\rho}.
\end{aligned}\end{displaymath}
Define $Q:=\frac{1}{2\pi}\big(\frac{\rho}{\nu\sin(\widetilde{\alpha}-d)e^{\nu\vartheta/\rho}}\big)^{\rho}
\mathcal{P}(\widetilde{\alpha}+d)$. Combining all estimates completes the proof of lemma \ref{thm:prop2}.
\end{proof}


\section*{Acknowledgments}
The author expresses sincere gratitude to Prof. Weihua Deng for his invaluable academic guidance. Special thanks are extended to Prof. Heping Ma for generously providing the code for the two-dimensional Legendre-Chebyshev transform. The author is particularly indebted to Prof. Tiejun Li for his generous support and for offering precious opportunities for professional development.


\end{document}